\numberwithin{equation}{section}
 \renewcommand\section{\@startsection {section}{1}{\z@}%
     {-4.5ex \@plus -1ex \@minus -.2ex}%
     {2.3ex \@plus.8ex}%
    {\centering\scshape}}
\newcommand\E{\mathcal{E}}
\newcommand\F{\mathcal{F}}
\newcommand\G{\mathcal{G}}
\newcommand\I{\mathcal{I}}
\newcommand\V{\mathcal{V}}
\newcommand\U{\mathcal{U}}
\newcommand\W{\mathcal{W}}
\newcommand\X{\mathcal{X}}
\newcommand\PP{\mathbb{P}}
\newcommand\CC{\mathbb{C}}
\newcommand\ZZ{\mathbb{Z}}
\newcommand\QQ{\mathbb{Q}}
\newcommand\VV{\mathbf{V}}
\newcommand\Def{\mathrm{Def}}
\newcommand\Pic{\operatorname{Pic}}
\newcommand\Hom{\operatorname{Hom}}
\DeclareMathOperator{\Aut}{Aut}
\DeclareMathOperator{\autr}{\mathbf{Aut}_{r}}
\DeclareMathOperator{\autone}{\mathbf{Aut}_{1}}
\DeclareMathOperator{\autb}{\mathbf{Aut}}
\DeclareMathOperator{\isob}{\mathbf{Iso}}
\DeclareMathOperator{\Iso}{Iso}
\DeclareMathOperator{\End}{End}
\DeclareMathOperator{\Stab}{Stab}
\DeclareMathOperator{\Amp}{Amp}
\newcommand\Ext{\operatorname{Ext}}
\newcommand\rep{\operatorname{Rep}}
\newcommand\spec{\operatorname{Spec}}
\newcommand\supp{\operatorname{Supp}}
\renewcommand\End{\operatorname{End}}
\newcommand\dext{\operatorname{ext}}
\DeclareMathOperator{\GL}{GL}
\DeclareMathOperator{\gl}{\mathfrak {gl}}
\DeclareMathOperator{\Defo}{Def}
\DeclareMathOperator{\Quot}{Quot}
\DeclareMathOperator{\rk}{rk}
\newcommand\wt{\widetilde}
\newcommand\wh{\widehat}
\newcommand\Shom{{\mc H}om}
\newtheorem{theorem}{Theorem}[section]
\newtheorem{prop}[theorem]{Proposition}
\newtheorem{lem}[theorem]{Lemma}
\newtheorem{defin}[theorem]{Definition}
\newtheorem{rem}[theorem]{Remark}
\newtheorem{notation}[theorem]{Notation}
\newtheorem{cor}[theorem]{Corollary}
\newtheorem{example}[theorem]{Example}
\newtheorem{theorem-defin}[theorem]{Theorem--Definition}
\newcommand{\be}{\begin{equation}}
\newcommand{\ee}{\end{equation}}
\newcommand{\mc}{\mathcal}
\renewcommand{\L}{\mc{L}}
\newcommand{\ov}{\overline}
\newcommand{ \ff} { \frac }
\newcommand{\mk}{\mathfrak}
\DeclareMathOperator{\tr}{tr}
\DeclareMathOperator{\ch}{ch}
\DeclareMathOperator{\td}{td}
\DeclareMathOperator{\Spec}{Spec}
\DeclareMathOperator{\gr}{gr}
\newcommand\Rep{\operatorname{Rep}}
\DeclareMathOperator{\slope}{slope}
\begin{document}

\title[Singularities and Quivers]{
Singularities of moduli spaces of sheaves on K3 surfaces and Nakajima quiver varieties }

\author{E.~Arbarello and  G.~Sacc\`a}

\address{Dipartimento di Matematica Guido Castelnuovo, Universit\`a di Roma Sapienza, 
Piazzale A. Moro 2, 00185 Roma, Italia. }
\email{ea@mat.uniroma1.it}

\address{ Department of Mathematics,
Stony Brook University,
Stony Brook, NY
11794-3651, USA}
\email{giulia.sacca@stonybrook.edu}

\maketitle

\begin{abstract}

The aim of this paper is to study the singularities of certain moduli
spaces of sheaves on K3 surfaces by means of Nakajima quiver
varieties. The singularities in question arise from the choice of a
non--generic polarization, with respect to which we consider stability,
and admit natural symplectic resolutions corresponding to choices of
general polarizations. For sheaves that are pure of dimension one, we show that these moduli spaces are, locally around a
singular point, isomorphic to a quiver variety and that, via this isomorphism, the natural symplectic
resolutions correspond to variations of GIT quotients of the quiver
variety. 
\end{abstract}

\tableofcontents

\section{Introduction}\label{intro}

A normal variety $X$ is said to have symplectic singularities \cite{Beauville-symplectic} if its smooth locus $X^{sm}$ carries a holomorphic symplectic form $\sigma$ having the property that,  for any resolution $f: Y \to X$, the  pull-back of $\sigma$ to  $f^{-1}(X^{sm})$ extends to a holomorphic form $\sigma_Y$ on $Y$. When this is the case, $X$ is called a {\it symplectic variety}. A resolution $f: Y \to X$ of a symplectic variety is called \emph{symplectic} if, in addition, the holomorphic $2$-form $\sigma_Y$ is non-degenerate. In particular, a symplectic resolution is crepant. Symplectic resolutions are rare: for example, $\mathbb C^{2n} \slash \pm1$ with the standard symplectic form on the smooth locus is a symplectic singularity, but it admits a symplectic resolution if and only if $n=1$.

Examples of symplectic varieties and symplectic resolutions come from both representation theory and  the theory  of moduli spaces of sheaves on K3 or abelian surfaces. 
Among the symplectic varieties coming from representation theory, we find the nilpotent cone of a complex semisimple Lie algebra and its Springer resolution, the quotients of $\mathbb C^2$ by a finite group of symplectic automorphism and their minimal resolutions, and Nakajima quiver varieties.  
Regarding  moduli spaces of sheaves on a K3 surface, 
their symplectic singularities come from two sources, when the Mukai vector is not primitive, or when the polarization (more generally, the stability condition) is not general. We  explain this in  Section \ref{moduli-pure-dim-one}. In \cite{Nakajima-Hilb}, Nakajima showed that the Hilbert--Chow morphism, from the Hilbert scheme of points on a holomorphic symplectic surface to the symmetric product of the surface itself, can be described in terms of quiver varieties. This fruitful interaction between quiver varieties and Hilbert schemes of points on surfaces has generated several results, especially on the cohomology and Chow groups of Hilbert schemes. One of the aims of the present article is to generalize Nakajima's description to other moduli spaces and this is the first step in that direction.

Two particular cases of singularities due to a non-primitive Mukai vector were studied by O'Grady  \cite{OGrady99}, \cite{OGrady03}.  Through this study, he discovered two new examples of irreducible holomorphic symplectic manifolds by exhibiting symplectic resolutions of two singular moduli spaces on a K3 surface and on an abelian surface, respectively. Inspecting O'Grady's construction, Kaledin, Lehn, and Sorger showed, in their inspiring paper \cite{Kaledin-Lehn-Sorger06}, that in the remaining cases with non-primitive Mukai vector the corresponding moduli space has  no symplectic resolution. Our aim is to continue their investigation, and to study the case when the singularities of a moduli space of sheaves arise from the choice of a non-generic polarization. In certain cases, moving slightly the polarization to a general one induces a symplectic resolution of the singular moduli space. Our specific purpose is to find a local analytic model of these singularities, as well as of their modular symplectic resolutions.

The case we will be studying is the one of pure dimension one sheaves on a K3 surface $S$. By definition, these are sheaves whose support, as well as that of any non-trivial sub-sheaf, has dimension one. Let us briefly explain the reasons for this choice.
Let $v\in H^*_{alg}(S,\ZZ)$ be the Mukai vector of a pure dimension one sheaf on $S$.  Yoshioka showed that the ample cone $\Amp(S)$ admits a finite wall and chamber structure relative to $v$. If $v$ is primitive, then for polarizations lying in a chamber
(i.e., not on a wall) the moduli space $M_H(v)$ is smooth. On the contrary, if a polarization $H_0$ is contained in a wall, then the corresponding moduli space $M_{H_0}(v)$ is singular. We choose to study the case of pure dimension sheaves because if $H$ lies in a chamber containg $H_0$ in its closure, then there is natural \emph{regular} morphism $h: M_H(v)  \to M_{H_0}(v)$, which is a symplectic resolution. In higher rank, this is not always the case, and one needs to look instead at resolutions arising from Matsuki--Wentorth twisted stability or from Bridgeland stability conditions. For example, in the case of ideal sheaves our methods recover Nakajima's quiver description of the Hilbert--Chow morphism. These are the next steps in our program and will be addressed in a separate work.

 To state our main theorem we need to introduce some notation. 
A quiver, denoted by $Q$, is an oriented graph. Let $I=\{1, 2, \dots, s\}$ be the set of vertices of $Q$ and denote by $E$ the set of edges. For an edge $e \in E$, we denote by  $s(e)$ and  $t(e) \in I$  the source and target of $e$, respectively. Given a 
 dimension vector $\mathbf n=(n_1, \dots, n_s) \in \mathbb Z^s_{\ge 0}$, we choose, for each $i=1,\dots s$ a complex $n_i$--dimensional vector space $V_i$ and we  let
\[
\Rep(\ov Q, \mathbf n) =\bigoplus_{e \in E} \Hom(V_{s(e)}, V_{t(e)}) \oplus \Hom(V_{t(e)}, V_{t(e)}) 
\]
be the space of $\mathbf n$--dimensional representions of the double quiver $\ov Q$ (defined in Section \ref{quivers}). The group $G:=G( {\mathbf n})=\prod GL(V_i)$
acts on $\Rep(\ov Q,\mathbf n)$ via conjugation and $\Rep(\ov Q,\mathbf n)$ is naturally equipped with a $G$-invariant
symplectic form. This is the context in which one can define a   {\it moment map}, with values in the Lie 
algebra $\mathfrak g$ of $G$
\[
\mu: \Rep(\ov Q,\mathbf n) \to \mathfrak g, \quad \quad \sum (x_e, y_e) \mapsto \sum [x_e, y_e]
\] 
Via the moment map, it is possible to perform symplectic reduction,
the essence of which is  that the quiver variety $\mathfrak M_0:=\mu^{-1}(0) \sslash G$ is a symplectic variety. When $\mathbf n$ is primitive, a symplectic resolution of $\mathfrak M_0$ can often be achieved via GIT. More precisely, 
let $\chi \in \Hom(G, \mathbb C)$ be a rational character of $G$. By considering the GIT quotient $\mathfrak M_\chi:=\mu^{-1}(0)\sslash_\chi G $ we get a projective morphism
\begin{equation} \label{xi}
\xi: \mathfrak M_\chi \to \mathfrak M_0,
\end{equation}
which, in many cases, is birational.
 In \cite{Nakajima-branching}, Nakajima  shows that there is a wall and chamber decomposition of $\Hom(G, \mathbb C)\otimes \mathbb Q$, so that if $\chi$ is chosen in a chamber then (\ref{xi}) is a symplectic resolution.

 We can now state the main theorem (Theorem \ref{main}).
 
\begin{theorem} \label{main-intro}Let $v$ be a primitive Mukai vector of a pure dimension one sheaf on $S$. For any singular point $x \in M_{H_0}(v)$ there exists a quiver $Q$ and a dimension vector $\mathbf n$ such that
\begin{enumerate}
\item[i)] There is a local isomorphism $\psi: (\mathfrak{M}_0,0) \cong (M_{H_0}(v), x)$;
\item[ii)] For every polarization $H$ in a chamber containing $H_0$ in its closure, there is a character $\chi_H$ in a chamber of $\Hom(G, \mathbb C)\otimes \mathbb Q$ such that the symplectic resolutions
\[
\xi: \mathfrak M_{\chi_H}(\mathbf{ n}) \to \mathfrak M_0(\mathbf{ n}), \quad \text{ and } \quad h: M_H(v) \to M_{H_0}(v),
\]
correspond to each other via $\psi$. 
\end{enumerate}
\end{theorem}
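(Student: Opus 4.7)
The plan is to combine Luna's slice theorem for moduli of sheaves with the Kuranishi model for the formal neighborhood of a polystable point, identify the Kuranishi map with the moment map by invoking Kaledin's formality theorem for DG endomorphism algebras on a K3 surface, and finally match sheaf stability with representation stability through a King-type GIT criterion.

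First I would unpack the singular point. A singular point $x \in M_{H_0}(v)$ corresponds to a strictly $H_0$-polystable sheaf $F = \bigoplus_{i=1}^{s} F_i \otimes V_i$, where $F_1, \ldots, F_s$ are pairwise non-isomorphic $H_0$-stable pure dimension one sheaves with primitive Mukai vectors $v_i$ and the same reduced Hilbert polynomial, and $V_i \cong \mathbb{C}^{n_i}$. Define the Ext-quiver $Q$ on the vertex set $I = \{1, \ldots, s\}$ by choosing, for each pair $(i,j)$, a set of arrows so that the double quiver $\overline{Q}$ has exactly $\dim \Ext^1(F_i, F_j)$ arrows from $i$ to $j$; set $\mathbf n = (n_1,\ldots,n_s)$ and $G = \prod GL(V_i)$. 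By the local model at a polystable point (an instance of Luna's slice theorem adapted to moduli of sheaves), there is an analytic local isomorphism
\[
(M_{H_0}(v), x) \;\cong\; \bigl(\kappa^{-1}(0) \sslash G,\, 0\bigr),
\]
where $\kappa : \Ext^1(F,F) \to \Ext^2(F,F)_0$ is the Kuranishi obstruction map and the subscript denotes the trace-free summand.

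Next I would identify this analytic model with a quiver variety. Decomposing into isotypic pieces gives $\Ext^1(F,F) = \bigoplus_{i,j} \Ext^1(F_i, F_j) \otimes \Hom(V_i, V_j)$, and Serre duality on the K3 surface pairs the $(i,j)$ and $(j,i)$ summands symplectically; choosing a polarization of these Serre pairings identifies $\Ext^1(F,F)$ with $\Rep(\overline{Q}, \mathbf n)$ as symplectic $G$-representations. Serre duality similarly identifies $\Ext^2(F,F)_0$ with the dual of the Lie algebra of $PG$. Under these identifications, the quadratic part of $\kappa$ is the Yoneda product $\Ext^1 \otimes \Ext^1 \to \Ext^2$, which coincides with the commutator bracket $\sum [x_e, y_e]$, that is, with the moment map $\mu$. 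The key input that the full Kuranishi map equals its quadratic part, so $\kappa = \mu$, is Kaledin's formality theorem for the derived endomorphism DG algebra of a coherent sheaf on a K3 surface; this is the mechanism by which Kaledin--Lehn--Sorger replace the formal Kuranishi model by an exact quadratic one. This yields part (i), with $\psi$ induced by the analytic isomorphism.

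For part (ii), given a polarization $H$ in an adjacent chamber with $H_0$ in its closure, the difference $H - H_0$ defines a linear functional on Mukai vectors whose values on the $v_i$ yield rational numbers $m_i$ satisfying $\sum n_i m_i = 0$. These determine a rational character $\chi_H$ of $G$ whose restriction to the factor $GL(V_i)$ is $\det^{m_i}$. A King-type argument then shows that, for $F'$ in the analytic neighborhood of $x$ corresponding to a representation $\rho$ of $\overline{Q}$, the $H$-(semi)stability of $F'$ translates into the $\chi_H$-(semi)stability of $\rho$, with destabilizing subsheaves of $F'$ corresponding to destabilizing subrepresentations of $\rho$. Since $H$ lies in an open chamber of $\Amp(S)$ relative to $v$, $\chi_H$ lies in a chamber of the Nakajima decomposition of $\Hom(G,\mathbb{C}) \otimes \mathbb{Q}$, so $\psi$ intertwines $h$ with $\xi$. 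The main obstacle is the identification $\kappa = \mu$: without formality one knows only that the quadratic part of $\kappa$ is the moment map, and higher Massey products could in principle deform the local model; Kaledin's theorem rules this out. A subsidiary point, less serious but requiring care, is to check that $\chi_H$ lands in an open chamber of the Nakajima decomposition rather than on a wall, which follows from the genericity of $H$ in its ample chamber together with the finiteness of Yoshioka's wall-and-chamber structure.
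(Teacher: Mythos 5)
Your proposal follows the same overall architecture as the paper (Luna slice plus Kuranishi model, formality to reduce $\kappa$ to its quadratic part, identification of $\kappa_2$ with the moment map, then a GIT comparison of stabilities), but there is a genuine gap at the most critical input. You invoke ``Kaledin's formality theorem for the derived endomorphism DG algebra of a coherent sheaf on a K3 surface'' as a known black box forcing $\kappa=\kappa_2=\mu$. No such theorem exists in that generality: formality of $R\Hom(F,F)$ for an arbitrary polystable sheaf on a K3 is precisely the Kaledin--Lehn \emph{conjecture}, known before this paper only for $\mathcal I_z^{\oplus n}$ (Kaledin--Lehn) and, by Zhang, for polystable sheaves of \emph{positive rank} whose stable summands all have rank $\geq 2$ or all have rank $1$. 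The sheaves at hand are pure of dimension one, hence rank zero with nontrivial $c_1$, and the hyperholomorphic-bundle machinery underlying Zhang's theorem cannot be applied directly (a hyperholomorphic sheaf must have trivial first Chern class). Establishing formality in this case is one of the paper's main new results (Theorem \ref{formality-intro}): one twists $F$ to make it non-special, passes to the Lazarsfeld--Mukai bundle $M_F$, identifies $R\Hom(F,F)$ with $R\Hom(M_F,M_F)$, and then proves via Yoshioka's Fourier--Mukai results on twisted stability that $M_F$ is $H_0$-polystable so that Zhang's theorem applies. Without this step your identification $\kappa=\mu$ is unsupported.

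A second, smaller but real, gap is in part (ii): the ``King-type argument'' is asserted rather than carried out. A sheaf $F'$ near $F$ does not come with a canonical dictionary matching its destabilizing subsheaves with destabilizing subrepresentations of the corresponding quiver representation; this correspondence is exactly what has to be proved. The paper does it by working on an \'etale slice $Z$ of the Quot scheme, introducing the $G$-linearized determinant line bundles $\lambda_{\mathcal F}(\ell H)$ and $\lambda_{\mathcal F}(\ell H_0)$, applying Le Potier's criterion after a large twist by $H_0$, and computing Hilbert--Mumford weights of one-parameter subgroups to show that $Z^H$ sits as a saturated open subset of the $\chi_H$-semistable locus; one also needs the cartesian property of the slice diagram over $M_{H_0}(v)$ and the $G$-\emph{equivariance} of the local isomorphism between the slice and $\kappa_2^{-1}(0)$ (which requires Rim's theorem and an equivariant Artin-approximation argument, not just Luna's theorem). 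Your explicit character does agree with the paper's formula, and your wall comparison is the content of Lemma \ref{affine morphism}, so the skeleton is correct; the two missing pieces above are where the substance of the proof lies.
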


\noindent
Let us make a few remarks.

First of all, recall that given a singular point $x \in M_{H_0}(v)$, there is a unique up to isomorphism $H_0$--polystable sheaf $F=\oplus_{i=1}^s F_i^{n_i}$ in the $S$--equivalence class represented by $x$. With this notation, the quiver $Q$ has $s$ vertices, and for every $i<j$, it has $\dim \Ext^1(F_i, F_j)$ edges from $i$ to $j$, and if $i=j$ it has $\dim \Ext^1(F_i, F_i)/2$ loops at the vertex $i$. This can be defined for arbitrary polystable sheaves, but if $F$ is pure of dimension one, then $Q$ is ``essentially'' the dual graph of its support. Also notice that $\Aut(F)=G$.

The heart of the main theorem is item $(ii)$ where the isomorphism in item $(i)$ is lifted to an isomorphism between symplectic resolutions of the two sides, and the wall--and--chamber structure of $\Amp(S)$ is explicitly compared with the one of $\Hom(G, \mathbb C)$.
The assignment $H \mapsto \chi_H$ of part $ii)$ can be chosen to be given by the following formula
\[
G \ni (g_1, \dots, g_s) \mapsto \chi(g_1, \dots, g_s)=\prod_{i=1}^s\det(g_i)^{(D_i\cdot H-D_i \cdot H_0)}, \,\, \text{where}\,\, D_i:=c_1( F_i),
\]
(for a more precise statement see (iii) of Theorem \ref{main}). 

Next, two words about the isomorphism in statement $(i)$ which holds, for any polystable stable sheaf $F$ satisfying the formality property or, more generally, satisfying the quadraticity property we will now discuss.

At any point $x=[F]$, a moduli space $M_{H_0}(v)$ is locally isomorphic to the quotient of the deformation space $\Def_F$ by the automorphism group $G=\Aut(F)$.
The differential graded Lie algebra (dgla) $R\Hom(F, F)$
is said to satisfy the {\it formality property}, if  it is quasi--isomorphic to its cohomology algebra $\Ext^*(F,F)$. When this is the case,
the deformation space $\Def_F$ is isomorphic to a complete intersection of quadrics in $\Ext^1(F, F)$. We call this the {\it quadraticity property }
of  $\Def_F$.
A result that is instrumental in our proof of part $(i)$  is to prove the quadraticty property for  $\Def_F$. 

\begin{theorem} \label{formality-intro} Let $x=[F]\in M_{H_0}(v)$ be a point corresponding to a $H_0$-polystable sheaf $F$  pure of dimension one on $S$. Then the deformation space $\Def_F$ is isomorphic to a complete intersection of quadrics in $\Ext^1(F, F)$. \end{theorem}

We thank Z. Zhang for pointing out to us that in a previous version this theorem was incorrectly stated (referring to formality instead of quadraticity).
Next is a brief description of the contents of the various sections.

In {\bf  Section \ref{moduli-pure-dim-one}} we set up the notation we use for moduli spaces of  pure sheaves of dimension one on a K3 surface, describing how the choice of a Mukai vector induces a wall and chamber structure on the ample cone of $S$.
 
In {\bf  Section \ref{Formality}} we prove Theorem \ref{formality-intro}. Using results by Yoshioka \cite{Yoshioka-FM}
we reduce the proof of  Theorem \ref{formality-intro} to a formality result  for sheaves of positive rank,  due to Zhang \cite{Zhang12}.

{\bf  Section \ref{G-equi-kuran}} is devoted to the study of Kuranishi families for a polystable sheaf on $S$. The formality property for a polystable sheaf $F$ implies that a Kuranishi family is (the completion of) a complete intersection of quadrics
$$
\kappa_2^{-1}(0)\subset\Ext^1(F,F)
$$
as in (\ref{k2 cup}). A subtle point is that the algebraization of this family can be preformed $G$--equivariantly.

In {\bf Section \ref{quivers}} we briefly recall the results on quiver varieties we need for our purposes. This paves the way to understand the GIT partial desingularizations of $\kappa_2^{-1}(0)\sslash G$ in terms of the characters of $G$.

The main theorem (Theorem \ref{main}) is stated in {\bf Section \ref{Statement of the main theorem}}. In this section we also relate very explicitly the wall and chamber structure of the ample cone of $S$, to the wall and chamber structure of $\Hom(G, \mathbb C)\otimes \mathbb Q$. 
The proof of the main theorem is given in  {\bf Section \ref{proof of main theorem}} and  uses the geometry of the Quot scheme, of an \'etale slice around a point corresponding to $F$, and certain natural determinant line bundles.

\subsection*{Acknowledgments} It is a pleasure to heartfully thank the many people we had enjoyable and useful conversations about the content of this paper.
They are: A. Bayer, M. de Cataldo, C. De Concini, D. Fiorenza, D. Huybrechts, R. Laza, M. Lehn, E. Macr\`i, A. Maffei, M. Manetti, E. Markman, C. Procesi, J. Starr, A. Vistoli, K. Yoshioka, Z. Zhang. We are also grateful to H. Nakajima for pointing out the reference \cite{Nakajima-branching}.

We express our gratitude to the Hausdorff Research Institute for Mathematics in Bonn and to the Institute for Advanced Study in Princeton, for generous hospitality during the various phases of preparation of this work. The second named author was member at the Institute for Advance Study during the year 2014--2015 and gratefully acknowlegdes the support of the Giorgio and Elena Petronio Fellowship Fund II and of NSF grant DMS--1128155.

\section{Notation and generalities on moduli spaces of sheaves on a K3  surface}\label{moduli-pure-dim-one}

Throughout this paper $S$ will denote a projective K3 surface. Given a sheaf $F$ on $S$, its Mukai vector $v=v(F)$ is defined by
\[
\ch(F) \sqrt{\td (F)}=(\rk (F), c_1(F), \chi(F)-r) \in H^*_{alg}(S, \ZZ).
\]
The lattice $H^*(S, \ZZ)$ is equipped with the non-degenerate Mukai pairing defined by
\[
v \cdot w= v_1 w_2-v_0 w_2 -v_2 w_0,
\]
for $v=(v_0, v_1, v_2)$ and $w=(w_0, w_1, w_2)$ in $H^*(S, \ZZ)$. If $F$ and $G$ are two coherent sheaves of Mukai vector $v$ and $w$, respectively, then
\[
\chi(F,G)=-v \cdot w.
\] 
In the following, by Mukai vector we will mean an element in $H^*_{alg}(S, \ZZ)$, which is the Mukai vector of some coherent sheaf on $S$.
Given a polarization $H$ in the ample cone $\Amp(S)$, we let $M_H(v)$ be the moduli space of $H$-semistable sheaves with Mukai vector $v$. Here,  semi-stability with respect to a given polarization $H$ means Gieseker semi-stability, defined in terms of the reduced Hilbert polynomial associated to $H$.

We denote by
\[
M^s_H(v) \subset M_H(v)
\]
the locus parametrizing stable sheaves. As proved by Mukai \cite{Mukai}, this is a smooth symplectic variety.
Indeed, given a point $[F] \in M^s_H(v)$, there a canonical identification
\[
T_{[F]}M^s_H(v) = \Ext^1(F,F),
\]
and obstructions to smoothness lie in the trace free part of $\Ext^2(F, F)$ (we will expand on this in Section \ref{G-equi-kuran}, while talking about deformation spaces). By stability and Serre duality,  $\Ext^2(F, F)\cong \CC$, and hence the obstruction space vanishes.
Moreover, when non-empty, we have
\[
\dim M_H(v) =v^2+2.
\]
Finally, the smooth variety $M^s_H(v)$ is endowed with the symplectic form defined at each point by the cup product
\[
\Ext^1(F,F) \times \Ext^1(F,F) \stackrel{\cup}{\longrightarrow} \Ext^2(F, F)=\CC.
\]

Following Yoshioka \cite{Yoshioka}, we make the following definition
\begin{defin} \label{positive v}
We say that a primitive element $v=(v_0, v_1, v_2) \in  H^*_{alg}(S, \ZZ)$  is positive if $v^2 \ge -2$ and one of the following holds:
\begin{itemize}
\item $v_0>0$;
\item $v_0=0$, $v_1$ is effective, and $v_2 \neq 0$;
\item $v_0=v_1=0$ and $v_2>0$.
\end{itemize}
\end{defin}

Following \cite{Bayer-Macri-proj}, Theorem 5.2 we can state
\begin{theorem}[Yoshioka] \label{Yoshioka non empty}
Let $\ov v$ be a positive element in $H^*_{alg}(S, \ZZ)$. Then for every $H \in \Amp(S)$ and every $m \ge 1$, the moduli space $M_H(m\ov v)$ is non empty.
\end{theorem}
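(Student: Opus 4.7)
The plan is to reduce to the non-emptiness of Hilbert schemes of points via Fourier--Mukai equivalences, after first handling non-primitive classes by taking direct sums.

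\textbf{Reduction to the primitive case.} If a Gieseker $H$-semistable sheaf $F$ with $v(F) = \ov v$ exists, then $F^{\oplus m}$ is $H$-polystable with Mukai vector $m \ov v$ (the reduced Hilbert polynomial is invariant under taking multiples), and hence defines a point of $M_H(m\ov v)$. So it suffices to treat $m = 1$ with $\ov v$ primitive and positive in the sense of Definition \ref{positive v}.

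\textbf{Existence for some polarization.} The three cases of positivity are handled uniformly. The case $v_0 = v_1 = 0$, $v_2 > 0$ is immediate, as $M_H((0,0,n))$ is the symmetric product and $\mathrm{Hilb}^n(S)$ is always non-empty. In the remaining two cases, the positivity hypothesis is precisely what ensures that the orbit of $\ov v$ under the group of auto-equivalences of $D^b(S)$ contains a Mukai vector of the form $(1, 0, 1-n)$ with $n = \tfrac12 \ov v^2 + 1 \ge 0$. Any such vector is represented by a (twisted) ideal sheaf of $n$ points, and the associated moduli space is the Hilbert scheme $\mathrm{Hilb}^n(S)$, non-empty for $n \ge 0$. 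Pulling back an ideal sheaf by a Fourier--Mukai equivalence $\Phi$ produces a complex of Mukai vector $\ov v$; by Yoshioka's analysis of the interaction of Fourier--Mukai transforms with Gieseker stability, one can arrange that for a suitable polarization $H' \in \Amp(S)$ these pull-backs are honest $H'$-semistable sheaves. In particular $M_{H'}(\ov v) \ne \emptyset$.

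\textbf{Propagation to all polarizations.} The ample cone $\Amp(S)$ admits a locally finite wall and chamber decomposition relative to $\ov v$, so any two polarizations $H'$ and $H$ are separated by finitely many walls. On any such wall $W$, a strictly semistable sheaf with Mukai vector $\ov v$ decomposes as $\bigoplus_i F_i^{n_i}$ with each $F_i$ stable and all $F_i$ sharing the same reduced Hilbert polynomial on $W$; the polystable representative $\bigoplus_i F_i^{n_i}$ is then semistable for every $H$ in the closure of each chamber adjacent to $W$. A finite induction on the walls between $H'$ and $H$ yields $M_H(\ov v) \ne \emptyset$.

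The main obstacle is the matching of Gieseker stability under the Fourier--Mukai transform: a Fourier--Mukai equivalence sends slope-stable objects to shifts of slope-stable objects for the transformed polarization, but controlling Gieseker stability of the images is subtler, and this is the technical heart of Yoshioka's proof. The wall-crossing step, by contrast, is conceptually clean, relying only on the existence of polystable representatives on each wall.
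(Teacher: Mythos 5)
The paper does not actually prove Theorem \ref{Yoshioka non empty}: it is imported verbatim from Theorem 5.2 of \cite{Bayer-Macri-proj} and attributed to Yoshioka, so there is no internal proof to compare yours against and your sketch must stand on its own. Your reduction $M_H(m\ov v)\supset\{F^{\oplus m}\}$ is fine, and you correctly flag the Fourier--Mukai/Gieseker-stability matching as the technical heart (though even there the claim that the autoequivalence group of a \emph{fixed} K3 carries $\ov v$ to $(1,0,1-n)$ is an oversimplification: transitivity on positive primitive vectors of fixed square is a lattice-theoretic statement that can fail for small $H^*_{alg}(S,\ZZ)$, and Yoshioka's actual argument deforms the triple $(S,H,\ov v)$ to an elliptic K3 and uses that non-emptiness is open and closed in the resulting proper family).

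The genuine gap is the propagation step. You assert that the polystable representative $\bigoplus_i F_i^{n_i}$ produced on a wall $W$ ``is semistable for every $H$ in the closure of each chamber adjacent to $W$.'' This is false whenever that representative is properly decomposable: by Theorem--Definition \ref{v walls} a $v$-generic $H$ admits \emph{no} strictly $H$-semistable sheaves of Mukai vector $\ov v$, while a direct sum with at least two summands is strictly semistable if it is semistable at all (the summand of maximal reduced $H$-Hilbert polynomial destabilizes it unless all reduced polynomials agree, and if they agree it is strictly semistable). So the sheaf you carry across the wall is $H$-unstable in the adjacent chamber and the induction collapses. The first half of the step is also unavailable at the stated level of generality: an $H'$-semistable sheaf need not remain semistable on the boundary of the chamber of $H'$ --- this is exactly the failure exhibited by the rank-two sheaf of Example \ref{failure higher rank}, and it is the reason the paper restricts to pure dimension one sheaves when constructing the morphism $h$ of Proposition \ref{walls and strata}. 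A correct treatment separates the cases: for $H$ on a wall one argues by induction on decompositions $v=\sum m_i w_i$ into positive classes of equal $H$-slope and takes direct sums of representatives of the $M_H(w_i)$ (this is precisely how the present paper \emph{uses} Theorem \ref{Yoshioka non empty} in the proof of part (1) of Proposition \ref{walls and strata}), while for generic $H$ the deformation and Fourier--Mukai argument must produce non-emptiness for every generic polarization directly, not chamber by chamber.
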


Let $F$ be an $H$-semistable sheaf. It is well known that $F$ admits a Jordan-H\"older filtration, which is an increasing filtration with the property that the successive quotients are $H$-stable sheaves of the same reduced Hilbert polynomial as $F$. Given $F$, this filtration depends on $H$ whereas the direct sum of the graded pieces, which will be denoted by $\gr_H(F)$ and which is an $H$-polystable sheaf, is uniquely determined by $H$.
Recall that two $H$-semi-stable sheaves $F$ and $F'$ are $S_H$-equivalent (and we write $F \sim_H F'$) if their Jordan-Holder filtration (with respect to $H$) have isomorphic graded pieces. In symbols
\[
F \sim_H F' \quad \iff \quad \gr_H(F)=\gr_H(F').
\]
The moduli space $M_H(v)$ parametrizes $S_H$-equivalence classes of $H$-semi-stable sheaves with Mukai vector $v$, and since for any $S_H$-equivalence class there is a unique $H$-polystable sheaf, we can say that $M_H(v)$ parametrizes isomorphism classes of $H$-polystable sheaves with Mukai vector $v$.
Notice also that if $F$ is $H$-stable then its $S_H$-equivalence class coincides with its isomorphism class.

From the above discussion about tangent and obstruction spaces, it follows that the singular locus of the moduli space lies in the strictly semi-stable locus $M_H(v) \setminus M^s_H(v)$ or, equivalently, in the locus parametrizing polystable sheaves with non-trivial automorphism group. There are two sources of strictly semi-stable sheaves
\begin{enumerate}
\item [1)] The Mukai vector $v$ is not primitive, i.e., $v=m \ov v$, with $m \ge 2$ and some $\ov v \in H^*_{alg}(S, \ZZ)$;
\item [2)] The Mukai vector $v$ is primitive, but the polarization $H$ is not $v$--general (see Theorem--Definition \ref{v walls} below for the definition of $v$--general polarization).
\end{enumerate}

Let us comment on these two points. Regarding item $1)$, we already said that Kaledin, Lehn, and Sorger \cite{Kaledin-Lehn-Sorger06} showed that no other example, beyond those studied by O'Grady, admit a sympectic resolution.

As for item $2)$, it is at the center of the present note. This case is quite different in nature, in that by changing the stability parameter one can always find a symplectic resolution.
For technical reasons which we will explain at the end of this section, we will from now on concentrate on the case of pure dimension one sheaves. As mentioned earlier we believe that the correct context for handling the general case is that of Bridgeland stability conditions; this will be the subject of a forthcoming paper.

\begin{defin}
A sheaf $F$ on $S$ is called pure of dimension one if its support has dimension one, and if the same holds for any non zero sub-sheaf of $F$.
\end{defin}

This means that $F$ can have $1$-dimensional, but not $0$-dimensional torsion.
If $F$ is a pure dimension one sheaf then its Fitting support, which is one-dimensional by definition, is a representative of its first Chern class. By definition, the Mukai vector of a pure dimension one sheaf $F$ of the form
\[
v(F)=(0, c_1(F), \chi(F)),
\]
and is positive in the sense of Definition \ref{positive v}, as soon as $\chi(F) \neq 0$. Let $g$ be the arithmetic genus of the Fitting support of $F$. Since $v^2=c_1(F)^2=2g-2$, it follows that $M_H(v)$ has dimension $2g$. In fact, there is a natural support morphism from $M_H(v)$ to the linear system defined by $c_1(F)$ (which is $g$-dimensional since we are on a K3 surface) that realizes this moduli space as a relative compactified Jacobian of the linear system. Since this morphism will not  play a role in the rest of the paper, we will not say anything more about it.

For a sheaf $F$ of pure dimension one, Gieseker semi-stability with respect to an ample line bundle $H$ is expressed by means of the slope
\[
\mu_H(F):=\frac{\chi(F)}{c_1(F) \cdot H}.
\]
From this one sees directly that, if the Fitting support $D$ is reduced and irreducible, then $F$ is stable with respect to \emph{any} polarization. In general, the stability of $F$ with respect to $H$ is determined by the quotient sheaves supported on the sub-curves of $D$ (for e.g., cf. Lemma 3.2 in \cite{Arbarello-Sacca-Ferretti}).

\begin{theorem-defin} [Yoshioka \cite{Yoshioka}, Huybrechts-Lehn \cite{Huybrechts-Lehn}] \label{v walls} Let $v \in H^*_{alg}(S, \ZZ)$ be a positive Mukai vector. There is a countable set of real codimension one linear subspaces in $\Amp(S) \otimes_\ZZ \mathbb{R}$ (called the \emph{walls} associated to $v$) such that if $H$ lies in the complement of these subspaces then there are no strictly $H$-semistable sheaves with Mukai vector $v$, while if $H$ lies on one of these walls, then there are strictly $H$-semi-stable sheaves with Mukai vector $v$. A connected component of the complement of the walls is called a \emph{chamber}. 
Let $\mathcal W_1, \dots, \mathcal W_k$ be a (possibly empty) set of walls. For $H$ varying in $\mathcal W_1 \cap \dots \cap \mathcal W_k$ but not on any other wall, the moduli space $M_H(v)$ is independent of $H$
 (a connected component of the set of such polarizations is called a \emph{face}; in particular a chamber is a face).
The set of walls is locally finite and, in the case where $v$ is the Mukai vector of a pure dimension one sheaf, it is actually finite.
\end{theorem-defin}

If $H$ and $H'$ are polarizations, we say that $H$ is \emph{adjacent} to $H'$, if $H'$ lies in the closure of the face containing $H$.

In the case $v$ is the Mukai vector of a pure dimension one sheaf, it is fairly straightforward to describe the walls associated to $v$.

\begin{prop}\label{walls and strata} 
Let $v=(0, D, \chi)$ be a positive Mukai vector (i.e. $D$ is an effective curve, and $\chi \neq 0$).
\begin{itemize}
\item [{\rm1)}] The walls associated to $v$ (briefly, the $v$-walls) are described by equations of the form
\[
\chi \, (\Gamma\cdot x)=\chi_\Gamma  (D \cdot x )
\]
where $\Gamma \subset D$ is a sub curve and $\chi_\Gamma $ ranges in a finite set of integers determined by $v$ and by $\Gamma$.
\item [{\rm2)}]  Let $H_0$ be a polarization that is not $v$-generic. Then, there exists a natural stratification of the singular locus of $M_{H_0}(v)$, whose strata are in one-to-one correspondence with decompositions
\[
v=\sum m_j w^{(j)},
\]
where $m_j >0$ and where $w^{(j)}$ are rank zero positive Mukai vectors.

\item [{\rm 3)}]  For any $H$ adjacent to $H_0$, there exists a morphism (cf. \cite{Zowislok}) 
\[
\begin{aligned}
h: M_H(v) & \longrightarrow M_{H_0}(v) \\
F & \longmapsto \gr_{H_0}(F)
\end{aligned}
\] 
which associates to each $H$-semistable sheaf $F$ the polystable sheaf $\gr_{H_0}(F)$ and which is an isomorphism over the locus of $H_0$-stable sheaves. In particular, if the general member of the linear system is an integral curve, then $h$ is birational.
\end{itemize}
\end{prop}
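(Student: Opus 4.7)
My plan is to handle the three items in turn, using that a pure dimension one sheaf $F$ with $v(F)=(0,D,\chi)$ has $H$-slope $\mu_H(F)=\chi/(D\cdot H)$ and that Gieseker semistability for such sheaves is governed entirely by this slope.

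For part (1), I start from the defining property of a $v$-wall: $H$ lies on a wall iff there exists a strictly $H$-semistable sheaf $F$ with $v(F)=v$. Any Jordan-H\"older factor $F'$ of such an $F$ is pure of dimension one with $v(F')=(0,\Gamma,\chi_\Gamma)$ for some effective class $\Gamma$ with $D-\Gamma$ effective, and the slope equality $\mu_H(F')=\mu_H(F)$ is precisely $\chi\,(\Gamma\cdot H)=\chi_\Gamma\,(D\cdot H)$; this produces the claimed hyperplanes. For the finiteness of admissible $\chi_\Gamma$ given $\Gamma$, I would invoke boundedness of the family of $H$-semistable sheaves with Mukai vector $v$ together with Grothendieck boundedness of their subsheaves of fixed Chern class, which confines $\chi_\Gamma$ to a bounded, hence finite, set of integers. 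Local finiteness of the wall set in $\Amp(S)\otimes\mathbb{R}$ is then clear: each wall is a hyperplane, and inside any compact region of the ample cone only finitely many classes $\Gamma$ contribute, because $\Gamma, D-\Gamma$ effective confines $\Gamma$ to a finite subset of $\NS(S)$. This last fact also yields the stronger global finiteness when $v$ has rank zero.

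For part (2), a singular point $x\in M_{H_0}(v)$ corresponds to a strictly $H_0$-semistable sheaf whose $S_{H_0}$-equivalence class is uniquely represented by a polystable sheaf $F=\bigoplus_{i=1}^s F_i^{\oplus m_i}$ with the $F_i$ pairwise non-isomorphic $H_0$-stable sheaves sharing the reduced $H_0$-Hilbert polynomial of $F$. Purity of $F$ forces purity of each $F_i$, so $v(F_i)=w^{(i)}$ is a positive rank zero Mukai vector, and $v=\sum m_i w^{(i)}$ records the polystable type of $F$. Partitioning the singular locus by this type yields the stratification; each stratum is locally closed by upper semicontinuity of the dimensions of the $\Hom$-spaces among the stable components. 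Part (3) is the main content: the key observation is that if $H$ lies in a chamber having $H_0$ in its closure, then every $H$-semistable sheaf of Mukai vector $v$ is also $H_0$-semistable, since for any subsheaf $F'\subset F$ the inequality $\mu_H(F')\le\mu_H(F)$ specializes to $\mu_{H_0}(F')\le\mu_{H_0}(F)$ by continuity along the face. Forming $\gr_{H_0}(F)$ from the $H_0$-Jordan-H\"older filtration then yields an $H_0$-polystable sheaf of Mukai vector $v$, giving $h$ on points. To promote this to a morphism I would follow Zowislok and work on the common Quot scheme $Q$ parametrising quotients with Mukai vector $v$: both moduli spaces are GIT quotients of comparable open loci in $Q$ by the same group, with $Q^{H\text{-ss}}\subseteq Q^{H_0\text{-ss}}$ by the previous observation, and the inclusion descends to a regular morphism of GIT quotients acting on points as $F\mapsto\gr_{H_0}(F)$. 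On the preimage of $M^s_{H_0}(v)$ the polystable representative equals $F$ itself, so $h$ is there an isomorphism. Finally, if the generic curve in $|D|$ is integral then any pure dimension one sheaf supported on it is automatically $H_0$-stable, so $M^s_{H_0}(v)$ is dense in $M_{H_0}(v)$ and $h$ is birational.

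The main obstacle is the clean construction of $h$ in (3): while the semistability comparison and the set-theoretic assignment $F\mapsto\gr_{H_0}(F)$ are straightforward, realising them as a regular morphism between the two GIT quotients rests either on a Kempf-type linearisation comparison on $Q$ or on the direct appeal to Zowislok's construction, and this is where the bulk of the technical work lives. The rest of the proposition is essentially a bookkeeping exercise using slope semistability and boundedness of families.
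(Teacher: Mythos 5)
Your treatment of parts (2) and (3) follows essentially the same route as the paper: the stratification by polystable type, the observation that non-strict slope inequalities persist as $H$ degenerates to $H_0$ along the closure of its face, the appeal to Zowislok for regularity of $h$, and birationality via stability of sheaves with integral support. Your added boundedness argument for the finiteness of the set of admissible $\chi_\Gamma$ in (1) is a reasonable supplement that the paper does not spell out.

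There is, however, one genuine omission in part (1). You only prove the \emph{necessary} direction: that any wall is contained in a hyperplane $\chi\,(\Gamma\cdot x)=\chi_\Gamma\,(D\cdot x)$ arising from a Jordan--H\"older factor. By the definition of a $v$-wall (Theorem--Definition \ref{v walls}), a hyperplane is a wall only if strictly semistable sheaves with Mukai vector $v$ actually \emph{exist} for polarizations on it, so to describe the walls one must also prove the converse: whenever $\chi_\Gamma=\frac{\chi}{D\cdot H}(\Gamma\cdot H)$ is an integer, there is a strictly $H$-semistable sheaf of Mukai vector $v$. The paper does this by setting $\Gamma'=D-\Gamma$, noting that $\chi_{\Gamma'}$ is then also an integer, producing semistable sheaves $G$ and $G'$ with Mukai vectors $(0,\Gamma,\chi_\Gamma)$ and $(0,\Gamma',\chi_{\Gamma'})$ via Yoshioka's non-emptiness theorem (Theorem \ref{Yoshioka non empty}), and taking $F=G\oplus G'$. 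Without this existence step your argument shows only that the walls lie \emph{among} the listed hyperplanes, not that the listed equations describe the walls; you should add the construction of $G\oplus G'$ (or an equivalent non-emptiness input) to close part (1).
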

\begin{proof}
We start with the proof of $(1)$.
 Let $H$ be a polarization lying on a wall, and let $F$ be a strictly $H$-semistable sheaf with Mukai vector $v$. This means that for every quotient $F \to G$, with Fitting support equal to some sub curve $\Gamma \subset D$ we have
\be \label{inequalities}
\frac{\chi}{D \cdot H} \le \frac{\chi(G)}{\Gamma\cdot H},
\ee
and that equality holds for a least one quotient sheaf $G$. It follows that a necessary condition for $H$ to lie on a $v$-wall is that there exist a subcurve $\Gamma$ such that the rational number
\[
\chi_\Gamma:=\frac{\chi}{D \cdot H} (\Gamma\cdot H)
\]
is an integer. Conversely, if the rational number $\chi_\Gamma$ is an integer, we can exhibit a strictly $H$-semi-stable sheaf $F$ with Mukai vector $v$ in the following way. Let $\Gamma' \subset D$ be the complementary sub-curve. Since $\chi_\Gamma$ is an integer if and only if $\chi_{\Gamma'}=\frac{\chi}{D \cdot H} (\Gamma'\cdot H)$ is an integer, we only need to produce $H$-semistable sheaves $G$ and $G'$ with Mukai vectors $w=(0, \Gamma, \chi_{\Gamma})$ and $w'=(0, \Gamma', \chi_{\Gamma'})$, respectively. Indeed, then we can simply set $F=G \oplus G'$.  This can be achieved thanks to Theorem \ref{Yoshioka non empty} which guarantees that $M_H(w)$ and $M_H(w')$ are both non-empty.

The stratification in $(2)$ is defined in terms of the \emph{type} of a polystable sheaf, in the following sense. For $j=1,\dots, s$, let $m_j$ be a positive integer and $w_j$ a positive, rank-zero, Mukai vector.
An $H_0$-polystable sheaf $F$, is said to be of type
\be \label{type of poly sheaf}
\tau=(m_1, w_1; \cdots;m_s, w_s)
\ee
 if it is of the form $\oplus_{i=1}^s F_i^{m_i}$, where the $F_i$'s are distinct $H_0$-stable sheaves of Mukai vector $w_i$. Notice that each stratum is isomorphic to an open subset of (a finite quotient of) the product
\[
M_{H_0}(w_1) \times \cdots \times M_{H_0}(w_s).
\]
As for $(3)$, we argue as follows. Let $\mathfrak F$ be the face containing $H$, and let $F$ be an $H$-semi-stable sheaf with Mukai vector $v$. We need to show that $F$  is $H_0$-semistable. Since $F$ is $H$-semistable, inequalities hold in (\ref{inequalities}); some are strict inequalities, whereas those corresponding to the equations of $\mathfrak F$ are equalities. Since $H_0$ is contained in the closure of $\mathfrak F$, we can move $H$ within $\mathfrak F$ until it hits its boundary at the face $\mathfrak F_0$ containing the polarization $H_0$.
Since $\mathfrak  F_0$ lies in the boundary of $\mathfrak F$, the equalities all continue to hold. As for the inequalities, they will either continue to hold strictly or, those defining $\mathfrak F_0$ in $\mathfrak F$,  will turn into equalities and hence $F$ is $H_0$-semistable. This defines the morphism $h$.
As for the statement about the birationality, it is clear in the case when $|D|$ has no fixed component (indeed $h$ is an isomorphism on the locus of sheaves with irreducible support) but, with a little more work, it can be shown in general.
\end{proof}

\begin{defin}\label{relevant} We say that a $v$--wall $\mc W$ is relevant to the sheaf $F$, if the polarizations parametrized by $\mc W$
make $F$ strictly semistable. \end{defin}

Notice that if $H$ is $v$-generic (i.e., $\mathfrak F$ is a chamber), then $h: M_H(v) \longrightarrow M_{H_0}(v)$ is a symplectic resolution, whereas in general it is only a partial resolution.
The aim of this article is to study these morphisms, locally around a point $[F] \in M_{H_0}(v)$. We will do so by means of Nakajima quiver varieties that will be introduced in Section \ref{quivers}.

Observe that item $(3)$ in Proposition \ref{walls and strata} can fail for sheaves of positive rank (where the reduced Hilbert polynomial has two coefficients, see Example \ref{failure higher rank} below), in the sense that the morphism  associated to a degeneration of the polarization can have non empty indeterminacy locus. This failure is precisely the reason for restricting to pure dimension one sheaves.

For higher rank, one needs to consider either Bridgeland stability conditions, where the analogue of the morphism $h$ is always regular, or twisted Gieseker stability as introduced by Matsuki and Wentworth \cite{Matsuski-Wentworth} (see also \cite{Zowislok}). This will be the subject of a forthcoming paper. One example where the morphism is not defined is the following.

\begin{example}\label{failure higher rank}
\emph{
Let $S$ be a K3 surface whose the Picard group is generated by two elliptic curves $e$ and $f$, with $e\cdot f=2$. Let $n \ge 0$ be a positive integer,  let $\mc I_z \subset \mc O_S$ be the ideal sheaf of a length $n$ subscheme $z$ of $S$, and set $L=f-e$. We claim that for $n \gg 0$ the rank two sheaf defined by a non-split extension
\[
0 \to \mc O_S \to E \to  L \otimes \mc I_z \to 0.
\]
has the following property: there is a chamber in $\Amp(S)$ where $E$ is stable, but there is a wall of this chamber where $E$ is unstable. First observe that for $H_0=e+f$ the sheaf $E$ is unstable since $\mu_{H_0}(E)=0$ and $\chi(E)/2 < \chi(\mc O_S)$. Second, we claim that for any $H=ae+bf$ with $b <a<3b$, the sheaf $E$ is $H$--stable.
Indeed, to check Gieseker stability of $E$ we have to compare the slope of $E$ with that of rank one subsheaves $G \subset E$. Set $\Gamma=c_1(G)$. Since for $a>b$, $\mu_H(E)>0$ we can assume that the composition $G \to E \to L \otimes \mc I_z$ is non-zero. From this it follows that $D:=L \otimes \Gamma^{-1}$ is effective and, since we can assume $G$ to be saturated in $E$ and since the extension defining $E$ is non-trivial, we can assume that $D$ is non-trivial. We therefore only have to worry about line bundles $\Gamma$ satisfying
\be \label{disugualianze su gamma} \aligned
\ff{L \cdot H}{2} \le \Gamma \cdot H & < L \cdot H.
\endaligned
\ee
Since $L \otimes \Gamma^{-1}$ is effective but $L \otimes \Gamma^{-2}$ cannot be effective, we can write $\Gamma=f-ce$, for some $c \ge 2$. It is now easy to check that for $H$ in the range above there is no $\Gamma$ satisfying (\ref{disugualianze su gamma}).}
\end{example}

\section{Quadraticity of Kuranishi families}\label{Formality}
Let $H_0$ be a polarization on $S$ and let
\be\label{fascio}
F=F_1^{n_1}\oplus\cdots\oplus F_s^{n_s}
\ee
be an $H_0$--polystable sheaf on $S$. Here the $F_i$ are the distinct ${H_0}$-stable factors of $F$.
We denote by $G$ the automorphism group of $F$
\be\label{auto}
G:=\Aut(F)\cong\overset{s}{\underset{i=1}\oplus}\GL(n_i)
\ee
Consider the functor (cf. \cite{Huybrechts-Lehn} Section 2.1.6)
\[
\Def_F: \text{Art} \longrightarrow \text{Sets}
\]
from the category of local Artinian $\CC$-algebras to the category of sets, which assigns to a local Artinian $\CC$-algebra $A$ the set $\Def_F(A)$  of equivalence classes of pairs $(F_A, \varphi)$, where $F_A$ is a flat deformation of $F$, parametrized by $A$, and $\varphi: F_A \otimes \CC \to F$ is an isomorphism. Two pairs $(F_A, \varphi)$ and $(F'_A, \varphi')$ are equivalent if there is an isomorphism $\psi: F_A \to F'_A$ such that $\varphi' \circ \psi =\varphi$.
It is well known that functor $\Def_F$ is a deformation functor (in the sense that it satisfies conditions $H_1$ and $H_2$ of \cite{Schlessinger}). Its tangent space
is canonically identified with
\[
\Ext^1(F,F),
\]
whereas the obstruction space with
\[
\Ext^2(F,F)_0:=\ker[\tr: \Ext^2(F,F) \to H^2(\mc O_S)].
\]
By using the definition of obstruction space, one gets the so-called \emph{Kuranishi map}
\be \label{formal kuranishi}
{\kappa}=\kappa_2+\kappa_3+\cdots:  \widehat{Ext^1(F,F)} \longrightarrow \Ext^2(F,F)_0
\ee
with values in the obstruction space, which is a formal map, defined inductively on the order, having the property that the formal scheme
\be\label{formal kuranishi-2} 
D_{\kappa}:=\kappa^{-1}(0)
\ee
parametrizes a formal deformation $(\wh{ \mathcal F}, \wh \varphi)$ of $F$.  This means that, if $A$ denotes the local Artinian $k$-algebra defined by $D_{\kappa}=\Spec A$ and $\mk m \subset A$ is the maximal ideal, then $(\wh{ \mathcal F}, \wh \varphi)$ is a collection of compatible families $\{ (\mathcal {F}_n, \varphi_n) \in \Def_F(A/\mk m^n) \}$.  This family, called formal Kuranishi or versal family, has the following versal property:

\emph{for any local Artinian $k$-algebra $B$ and any equivalence class $(F_B, \varphi)$ in $\Def_F(B)$ there is a morphism $\spec B \to D_{\kappa}$ inducing $(F_B, \varphi)$ by pull-back. This morphism is not unique, but the induced tangent map is unique.}

This property determines $ D_{\kappa}$ uniquely, but not up to unique isomorphism.  The formal scheme $ D_{\kappa}$ is called the {\it versal deformation space} (or hull) by Schlessinger \cite{Schlessinger} and Rim \cite{Rim}, and miniversal deformation space by Hartshorne. The versality property translates into the fact that the second order term (but not the higher order ones) of the Kuranishi map is uniquely determined. More specifically, it can be shown \cite{Kaledin-Lehn-Sorger06} that this term coincides with the cup product map, i.e.,
\be \aligned \label{k2 cup}
\kappa_2: \Ext^1(F,F) & \longrightarrow \Ext^2(F,F)_0 \\
e & \longmapsto \kappa_2(e)=e\cup e
\endaligned
\ee

A way to construct  Kuranishi maps and versal deformation spaces is within the framework   of \emph{differential graded Lie algebras} (dgla for short). The advantage of this point of view is that it allows, in some cases, to see some properties of the Kuranishi map that cannot be seen otherwise. Given a dgla there is an abstract way of assigning to it a deformation functor (cf. \cite{Goldman-Millson},  \cite{Manetti99}, \cite{Manetti04}) and this deformation functor admits a formal versal deformation space, defined by an equation in the first graded piece of the graded algebra.  The quadratic term of the equation is canonically identified with the Lie bracket, or commutator, on the first graded piece of the graded algebra. Two observations are important for the following. First, if we start from the dgla $R\Hom(F,F)$, then the deformation functor is exactly the deformation functor $\Def_F$ defined above (cf. \cite{Manetti99})  and the versal deformation space can be identified with the base of a formal Kuranishi map (with the equation corresponding to the formal Kuranishi map defined above). Second, if the dgla has trivial differential, then the equation defining the versal deformation space is quadratic \cite{Goldman-Millson}. In particular, if this is the case then the formal deformation space can be defined by a quadratic equations, i.e. referring to (\ref{k2 cup}), we have
\be\label{def-form}
\Def_F\cong\kappa_2^{-1}(0)
\ee
The crucial observation (\cite{Goldman-Millson}, \cite{Manetti99}) is that given two quasi-isomorphic dgla's, the versal deformation spaces associated to them are isomorphic.
Recall that a dgla
 $L$
    is {\it formal} if there exists a pair of quasi-isomorphisms  of
dgla's: $L\leftarrow M\rightarrow H$,
with $H$ having trivial differential.   

\begin{defin} \label{formality}
We say that a sheaf $F$ satisfies the {\it dgla-formality condition}
if the dgla   $R\Hom(F,F)$ is formal. We say that $F$ satisfies the quadraticity property if the deformation space is a complete intersection of quadrics, i.e. if (\ref{def-form}) holds. The formality property implies the quadraticity property.

\end{defin}

In \cite{Kaledin-Lehn07}, Kaledin and Lehn prove the following proposition

\begin{theorem}[\cite{Kaledin-Lehn07}] Let $S$ be a K3 surface, and let $\mc I_z$ be the ideal sheaf of a subscheme $z \subset S$ of finite length. Then the polystable sheaf $E= \mc I_z^{\oplus n}$ satisfies the the formality property, i.e., the dgla $R\Hom(E,E)$ is formal.
\end{theorem}

Inspired by Kaledin Lehn's work, Zhang  \cite{Zhang12} proves the following theorem.

\begin{theorem} \label{Zhang} Let $(S,H)$ be a polarized K3 surface. Let $v_0$ be a primitive Mukai vector of positive rank and such that  there is at least one $\mu_H$-stable sheaf on $S$ with Mukai vector $v_0$. Let $m$ be a positive and let $E$ be an $H$--polystable sheaf with Mukai vector $v=mv_0$ whose decomposition in non isomorphic stable summands is given by
$$
E= \overset{s}{\underset{i=1}
\oplus}E_i^{n_i}
$$
Suppose that $v(E_i) \in \mathbb N v_0$ and, for every $i$, let $r_i$ be the rank of $E_i$. Then $E$ satisfies the dgla-formality property in following cases:
\begin{enumerate}
\item[(1)] when $r_i\geq 2$ for all $i$;\\
\item[(2)] when $r_i=1$ for all $i$.
\end{enumerate}
\end{theorem}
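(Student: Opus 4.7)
The plan is to exploit two facts in tandem: formality of the endomorphism dgla $R\Hom(E,E)$ is invariant under derived Morita equivalences (since a Fourier--Mukai equivalence $\Phi: D^b(M) \to D^b(S)$ induces a quasi-isomorphism $R\Hom_S(E,E) \simeq R\Hom_M(\Phi^{-1}(E), \Phi^{-1}(E))$ of $A_\infty$-algebras), and Kaledin and Lehn \cite{Kaledin-Lehn07} have already established formality for $\I_z^{\oplus n}$. The strategy is to attach a Fourier--Mukai transform to $v_0$ and use it to reduce the formality question to a setting either covered by Kaledin--Lehn or manifestly easier.

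I would handle case (2) first. The primitivity of $v_0$, combined with $v(E_i) \in \mathbb{N}\, v_0$ and $r_i = 1$, forces $r_0 = 1$ and $v(E_i) = v_0$ for every $i$, so each $E_i$ is of the form $L \otimes \I_{Z_i}$ for a common line bundle $L$ and distinct zero-dimensional subschemes $Z_i$. Tensoring by $L^{-1}$ preserves $R\Hom$, reducing the problem to formality of $\bigoplus_i \I_{Z_i}^{n_i}$. This is a mild generalization of Kaledin and Lehn's situation: their argument builds an explicit $L_\infty$ quasi-isomorphism between a dg model of $R\Hom(\I_z^{\oplus n}, \I_z^{\oplus n})$ and its cohomology, exploiting the holomorphic symplectic form on $S$. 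Since the cross-terms $R\Hom(\I_{Z_i}, \I_{Z_j})$ for $i \neq j$ are still controlled by Serre duality and the symplectic pairing, their construction should extend essentially verbatim, with the $\GL_n$-equivariance upgraded to a $\prod_i \GL_{n_i}$-equivariance.

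For case (1), with all $r_i \geq 2$, I would reduce to a situation analogous to case (2) by a Fourier--Mukai transform. The hypothesis gives a $\mu_H$-stable sheaf with primitive Mukai vector $v_0$ of positive rank; by results of Mukai and Yoshioka, the moduli space $M := M_H(v_0)$ is then a K3 Fourier--Mukai partner of $S$, and a (possibly Brauer-twisted) universal sheaf yields an equivalence $\Phi: D^b(M) \to D^b(S)$ sending the Mukai vector of a skyscraper on $M$ to $v_0$ on $S$. Each stable summand $E_i$ with $v(E_i) = m_i v_0$ then corresponds, under $\Phi^{-1}$, to an object of Mukai vector $(0,0,m_i)$ on $M$. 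Assuming the relevant WIT vanishings (which is the point where $r_i \geq 2$ should enter), this object is, up to shift, a zero-dimensional sheaf on $M$, so that $E$ as a whole transforms into a direct sum of zero-dimensional sheaves, whose formality can be checked directly by a Koszul-resolution computation at each support point, using that $\Ext^*$ between skyscrapers at distinct points vanishes.

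The main obstacle I anticipate is precisely this Fourier--Mukai step in case (1): one must verify that each $\Phi^{-1}(E_i)$ is concentrated in a single cohomological degree and that the polystable structure of $E$ on $S$ translates into a direct-sum structure of the expected shape on $M$. Once that is in hand, the proof of both cases reduces to the single combinatorial/dg-algebraic task of extending Kaledin--Lehn from $\I_z^{\oplus n}$ to $\bigoplus_i \I_{Z_i}^{n_i}$ (respectively, to a direct sum of zero-dimensional sheaves), which is the technical heart of the argument.
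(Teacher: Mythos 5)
This is a theorem of Zhang that the paper quotes rather than proves; the paper's own account of the proof is the hyperk\"ahler one: polystability gives, via Uhlenbeck--Yau, a Hermitian--Einstein connection on $E$ and hence on $E\otimes E^\vee$, whose first Chern class vanishes, so $E\otimes E^\vee$ is hyperholomorphic in Verbitsky's sense; one extends it, with its algebra structure, over the twistor family and invokes Kaledin's criterion for formality in families. Your Fourier--Mukai route is genuinely different, and it has a fatal gap in case (1). To send $v_0$ to the Mukai vector $(0,0,1)$ of a skyscraper you need an equivalence $\mathcal{D}^b(M)\to\mathcal{D}^b(S)$ inducing an isometry of Mukai lattices carrying $(0,0,1)$ to $v_0$; since the Mukai pairing is preserved and $(0,0,1)^2=0$, this forces $v_0^2=0$. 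The theorem makes no such assumption: for $v_0^2>0$ the moduli space $M_H(v_0)$ has dimension $v_0^2+2>2$, is not a K3 surface, and no kernel of the kind you describe exists. So the reduction of case (1) to zero-dimensional sheaves collapses for all non-isotropic $v_0$. (This is also why the paper runs the Fourier--Mukai reduction in the \emph{opposite} direction: Theorem \ref{stabil-LM} transforms rank-zero sheaves into positive-rank ones precisely so as to land in the hypotheses of Zhang's theorem, where the Hermitian--Einstein machinery is available.)

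Case (2) has a different problem. Your observation that $r_i=1$ forces $v(E_i)=v_0$ for all $i$, hence $E_i=L\otimes\I_{Z_i}$ for a common $L$, is correct, but the claim that Kaledin--Lehn extends ``essentially verbatim'' to $\bigoplus_i \I_{Z_i}^{n_i}$ with distinct $Z_i$ skips the actual difficulty. Their argument for $\I_z^{\oplus n}$ rests on the tensor decomposition $R\Hom(\I_z^{\oplus n},\I_z^{\oplus n})\simeq R\Hom(\I_z,\I_z)\otimes\gl_n$ and on the equivalence of formality for $A$ and for $A\otimes\gl_n$. Once the $Z_i$ are distinct, the cross-terms $R\Hom(\I_{Z_i},\I_{Z_j})$ destroy this tensor structure and no Morita-type reduction applies; controlling exactly these cross-terms is the content of Zhang's theorem and is what forces the global twistor argument. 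As written, your proposal replaces the theorem's main difficulty with an assertion.
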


The proof builds on the work of Verbitsky \cite{Verbitsky}, who introduced the notion of hyperholomorphic bundles and  of Kaledin \cite{Kaledin07}, which gives certain criteria for when formality holds in families (see also \cite{Lunts}).
The proof of part $(1)$ uses the Ulhenbeck--Yau theorem on the existence of a Hermitian--Einstein connection on stable vector bundles, which under the assumptions of the theorem guarantees a Hermitian--Einstein connection on $E$, and therefore on $E \otimes E^\vee$. This allows one to conclude that $E\otimes E^\vee$ is a hyperholomorphic sheaf which, in turn, allows one to use Theorem 4.3 of \cite{Kaledin07}. Recall that given a hyperk\"ahler metric on a K3 surface $S$, there is a whole $\PP^1$ of complex structures for which that metric stays  hyperk\"ahler.  More precisely, one defines a {\it twistor family} $\X\to \PP^1$, whose total space is diffeomorphic to $X\times\PP^1$ and whose fibres are copies of $X$ equipped with the complex structure parametrised by $\PP^1$.
Roughly speaking, a hyperholomorphic sheaf is a sheaf $\F$ on $\X$ that is holomorphic with respect to all of these complex structures.
Since for the general complex structure parametrized by this $\PP^1$, the corresponding K\"ahler surface has no holomorphic curves, a necessary condition for a sheaf to be hyperholomorphic is that its first Chern class is trivial (for a partial converse see Theorem 3.9 of \cite{Verbitsky}). For this reason one cannot use this strategy to prove the a formality result in the case of pure dimension one sheaves.

\begin{rem}\label{remark zhang} 1) A first remark  about Zhang's paper is the following. It is not immediately apparent that the hyperholomorphic sheaf $\F$ extending $E\otimes E^\vee$ carries an algebra structure. The author explained to us how to proceed.
The algebra structure on $F=E^\vee \otimes E$, is given by a contraction map  $F \otimes F \to F$, that is by a
 global section of the sheaf $G = F^\vee \otimes F^\vee \otimes F$ on $X$. To defined the algebra structure it is enough to extend this section to a global section of
 $ \mathcal{G} = \mathcal{F}^\vee \otimes \mathcal{F}^\vee \otimes \mathcal{F}$. Now $\G$ is a hyperholomorphic sheaves
 and one may use Proposition 3.4 \cite {Zhang12} (i.e. Proposition 6.3 in  \cite{Verbitsky}) on $G$ and $\mathcal{G}$, for $i=0$.

2) A second important remark about Zhang's paper is the following.
 Looking into the proof of Zhang's theorem, one sees that the assumption on $H$ and $E$ is not  necessary since for the existence of a Hermitian--Einstein connection on $E$ one only needs to assume that the ratio $(c_1(E_i) \cdot H)\slash r_i$ is independent of $i$, a condition which is satisfied by assumption since $E$ is polystable.
It follows that one can state the theorem also in the case where the polystablity of $E$ comes not from the non--primitiveness of the Mukai vector, but from the fact that the polarization is not general.

\end{rem}

We will now describe  a method to reduce the problem of quadraticity of the deformation space $\Def_F$ for a pure dimension one sheaf $F$, to the case of formality for  positive rank sheaves where one can use Zhang's result.
A first example of this procedure is given by Lazarsfeld--Mukai bundles.

As usual, let $S$ denote a K3 surface.
We will say that a pure,  dimension-one sheaf $F$ on a K3 surface $S$,  is {\it non-special}
if the following conditions are satisfied.
\be\label{conditions_F}
\aligned
&a)\,\,\, F\,\,\, \text{ is generated by its sections,}\\
&b)\,\,\, H^1(S, F)=0\,.
\endaligned
\ee

The kernel $M_F$ of the evaluation of global sections of $F$,defined by the exact sequence
\be\label{LM_start_F1}
0\to M_F\to H^0(S, F)\otimes {\mc O}_S\to F\to 0,
\ee
is locally free and its dual 
$$
E_F=M_F^\vee
$$
is called the Lazarsfeld-Mukai sheaf associated to $F$.

For the first properties
of these bundles see \cite{Lazarsfeld-Petri}. Taking $\Shom(\,\,\,,{\mc O}_S)$ of (\ref{LM_start_F1})
one easily establishes the following equalities
\be
\label{basic-eq}
 h^1(S,M_F)= h^1(S,E_F)=h^2(S,E_F)=h^0(S,M_F)=0
\ee
and since we are assuming  $H^1(S, F)=0$, from the dual of (\ref{LM_start_F1}) we also get an isomorphism
 \[
H^0(S, F)^\vee\cong H^0(S, E_F)
 \]
 The following two facts can be easily verified directly.
 
 {\bf Fact 1.} {\it There is an  isomorphism of differential graded Lie algebras:} 
$$
\Ext^\bullet(F, F) \cong \Ext^\bullet(M_F, M_F)=\Ext^\bullet(E_F, E_F).
$$ 

\vskip 0.3 cm
{\bf Fact 2.} {\it Let $G=\Aut(F)$. Then $G\cong \Aut(M_F)$ and there is a $G$-equivariant isomorphism of functors}
\[
\eta: \Defo_F\to \Defo_{M_F}.
\]

Putting together the two facts above
we get the following result.

\begin{prop} \label{defm and deff} Let $S$ be a K3 surface. Let $F$ be a non-special pure dimension one sheaf and let $M_F$ be its Lazarsfeld-Mukai  bundle.  There exists a $G$--equivariant isomorphism between $\Def_F$ and $\Def_M$. In particular, $\Def_F$ enjoys the quadraticity property if and only if the  $\Def_{M_F}$ does, too.
\end{prop}

From Zhang's Theorem and Remark \ref{remark zhang} we then get the following quadraticity criterion in the pure dimension one case.

 \begin{prop}\label{rank0form}Let $(S,H_0)$ be a polarized K3 surface. Let $v = (0,[C],\chi)$ be a Mukai vector and let $F$ be a  $H_0$-polystable sheaf on $S$, pure of dimension one and non-special, and with Mukai vector equal to $v$. If $M_F$ is $H_0$-polystable, then both $\Def_{M_F}$ and
$\Def_F$ satisfy the quadraticity property.
\end{prop}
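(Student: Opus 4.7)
The plan is to reduce the formality statement for $F$ to the corresponding statement for the Lazarsfeld--Mukai bundle $M_F$, and then to invoke Zhang's theorem in the strengthened form allowed by Remark \ref{remark zhang}(2).

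By the preceding proposition (an immediate consequence of Facts 1 and 2), the Kuranishi families of $F$ and of $M_F$ enjoy the dgla-formality property simultaneously, so it suffices to prove formality of $M_F$. First I would observe that $M_F$ is a locally free sheaf of positive rank equal to $h^0(S,F)$: by condition (a) of (\ref{conditions_F}) the evaluation map in (\ref{LM_start_F1}) is surjective, and since $F$ is pure of dimension one it has homological dimension one, so that the kernel $M_F$ is automatically locally free. By the polystability hypothesis I can then decompose
\[
M_F = \bigoplus_{i=1}^{s} M_i^{m_i}
\]
into pairwise non-isomorphic $H_0$-stable locally free summands $M_i$ of positive ranks $r_i$.

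Next I would apply Theorem \ref{Zhang} to $M_F$: in the case $r_i \geq 2$ for every $i$ we fall into part (1), while in the case $r_i = 1$ for every $i$ we fall into part (2). The hypothesis in Zhang's theorem that the Mukai vectors $v(M_i)$ all lie in $\mathbb{N}\, v_0$ for some fixed primitive $v_0$ need not hold for the $M_i$ appearing here, and this is where Remark \ref{remark zhang}(2) enters. Inspecting the proof of Theorem \ref{Zhang} one sees that the primitivity hypothesis is used only in order to obtain a Hermitian--Einstein connection via Uhlenbeck--Yau, and what the latter actually requires is merely the equality of the slopes $(c_1(M_i)\cdot H_0)/r_i$ across $i$. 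This equality is automatic from the $H_0$-polystability of $M_F$, so Zhang's argument goes through and yields formality of $R\Hom(M_F,M_F)$. Combined with the preceding proposition, this gives formality of $R\Hom(F,F)$ as well.

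The main delicate step in the plan is the application of Theorem \ref{Zhang} outside of its literal hypotheses, handled via Remark \ref{remark zhang}(2); the reduction through the Lazarsfeld--Mukai bundle is essential because Zhang's argument rests on hyperholomorphic sheaf techniques that require positive rank, as explained in the discussion preceding the remark.
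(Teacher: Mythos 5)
Your proof is correct and follows essentially the same route as the paper, which presents this proposition without a separate argument, as an immediate consequence of the preceding proposition (formality for $F$ is equivalent to formality for $M_F$, via Facts 1 and 2) together with Theorem \ref{Zhang} as strengthened by Remark \ref{remark zhang}. The one point you share with the paper in leaving implicit is the mixed case in which some $r_i=1$ and some $r_i\geq 2$, which falls outside the two alternatives of Zhang's theorem as stated; this is harmless in the paper's actual application, since before invoking the proposition one twists by a high multiple of $H_0$, forcing every $r_i=h^0(F_i(nH_0))\geq 2$.
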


The main result of this section is the following theorem.

\begin{theorem} \label{formality-2} Let $x=[F]\in M_{H_0}(v)$ be a point corresponding to a $H_0$-polystable sheaf $F$  pure of dimension one on $S$. Then the deformation space of $F$ satisfies the quadraticity property.
\end{theorem}

In view of Proposition \ref{rank0form}, in order to prove this theorem it suffices to reduce ourselves to the case where $F$
is non-special and then prove that $M_F$ is $H_0$--polystable. The first task is easily fulfilled.
In fact, tensoring $F$ with any power of $H_0$, preserves $H_0$-(poly)stability of $F$ and gives an isomorphism of $M_{H_0}(v)$ onto $M_{H_0}(v_n)$, where $v_n=v(F(nH_0))$. Using Lemma \ref{H to H'} we may assume that $F$ is non-special. 

We are thus reduced to proving the following theorem. In proving this theorem, we will appeal to results by Yoshioka that were kindly pointed out to us by the author himself.

\begin{theorem}\label{stabil-LM} Let $F=\oplus F_i^{n_i}$ be a non-special, pure, dimension one sheaf on $S$ which is polystable with respect to a given polarization $H_0$. Then $M_F$ is $ H_0$-polystable.
\end{theorem}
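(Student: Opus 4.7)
The strategy is to decompose $M_F$ into Lazarsfeld--Mukai bundles attached to the stable factors of $F$, show each is $\mu_{H_0}$-stable with a common reduced Hilbert polynomial, and then conclude. First, the Lazarsfeld--Mukai construction is compatible with direct sums in the non-special setting: since $H^0(-)$ and the evaluation map are both additive, $F=\bigoplus F_i^{n_i}$ yields $M_F=\bigoplus M_{F_i}^{n_i}$, and the conditions \eqref{conditions_F} of non-speciality pass to each $F_i$ because global generation and the vanishing of $H^1$ are both additive over direct sums. It therefore suffices to show that each $M_{F_i}$ is $\mu_{H_0}$-stable and that the $M_{F_i}$ share a common reduced Hilbert polynomial.

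For the slope computation, $H_0$-polystability of $F$ gives that all $F_i$ are $H_0$-stable with common slope $\mu_0:=\chi(F_i)/(D_i\cdot H_0)$, where $D_i:=c_1(F_i)$. Purity of $F_i$ gives $\Hom(F_i,\mc O_S)=0$, hence $H^2(F_i)=0$ by Serre duality, and combined with $H^1(F_i)=0$ this forces $\chi(F_i)=h^0(F_i)$. Reading off the Chern character from the evaluation sequence yields $\rk(M_{F_i})=h^0(F_i)$ and $c_1(M_{F_i})=-D_i$, so
\[
\mu_{H_0}(M_{F_i})=-\frac{D_i\cdot H_0}{h^0(F_i)}=-\frac{1}{\mu_0},
\]
independent of $i$; a parallel Riemann--Roch calculation, again using $\chi(F_i)=h^0(F_i)$, shows that in fact the full reduced Hilbert polynomials $p_{M_{F_i}}(n)$ all coincide.

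For the stability of the individual $M_{F_i}$, I would argue via a Fourier--Mukai identification. The evaluation sequence realizes $M_{F_i}$ as the Fourier--Mukai transform of $F_i$ with kernel the ideal sheaf $\mc I_\Delta$ of the diagonal $\Delta\subset S\times S$: applying $R(\pi_2)_*(\pi_1^*F_i\otimes -)$ to $0\to\mc I_\Delta\to\mc O_{S\times S}\to\mc O_\Delta\to 0$, non-speciality of $F_i$ forces the transform to be concentrated in degree zero and equal to $M_{F_i}$. The preservation of $\mu$-stability under this class of Fourier--Mukai transforms for pure dimension-one sheaves on a K3 is the kind of result in Yoshioka's \cite{Yoshioka-FM}, and applied to each $H_0$-stable $F_i$ it delivers the required $\mu_{H_0}$-stability of $M_{F_i}$.

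Combining these steps, $M_F=\bigoplus M_{F_i}^{n_i}$ is a direct sum of $\mu_{H_0}$-stable sheaves sharing a reduced Hilbert polynomial, hence is $H_0$-polystable. The principal obstacle I anticipate is step three: locating the precise preservation statement in \cite{Yoshioka-FM}, checking that its hypotheses hold for each $F_i$ (non-specialness having been arranged by the preliminary twist by a large power of $H_0$), and verifying that the stability that comes out is indeed $\mu_{H_0}$-stability against the original polarization $H_0$, rather than some Fourier--Mukai mutated variant on the bundle side.
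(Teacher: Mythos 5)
Your overall strategy coincides with the paper's: reduce to the stable summands $F_i$, realize $M_{F_i}$ as the Fourier--Mukai transform of $F_i$ with kernel $\mc I_\Delta$, and appeal to Yoshioka's \cite{Yoshioka-FM} to transfer stability. Your preliminary reductions are correct and in places more explicit than the paper's: the additivity $M_F=\bigoplus M_{F_i}^{n_i}$, the computation $\rk(M_{F_i})=h^0(F_i)=\chi(F_i)$ and $c_1(M_{F_i})=-D_i$, and the observation that the reduced Hilbert polynomials of the $M_{F_i}$ coincide because polystability of $F$ forces $D_i\cdot H_0/\chi_i$ to be independent of $i$ (one checks $v(M_{F_i})=(\chi_i,-D_i,0)$, so the constant terms also match).

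However, the step you yourself flag as the principal obstacle is a genuine gap, and it is exactly where the substance of the paper's proof lies. There is no off-the-shelf statement in \cite{Yoshioka-FM} saying that this Fourier--Mukai transform preserves $\mu_{H_0}$-stability of pure dimension-one sheaves. What Theorem 1.7 of \cite{Yoshioka-FM} actually gives is an isomorphism $M_{H_0}(v)^{ss}\cong M^{G_2}_{\wh H_0}(\F_\E(v))^{ss}$, whose output is \emph{$G_2$-twisted Gieseker stability} (with $G_2=\mc I_x$) with respect to a \emph{transformed} polarization $\wh H_0$ --- precisely the ``mutated variant'' you worried about. Closing the gap requires three things the paper carries out explicitly: (i) verifying the numerical hypothesis $a>\max\{3,\,d^2H_0^2/2+1\}$ of that theorem, which is the real reason one twists by a large power of $H_0$ beforehand (non-speciality alone is not the point); (ii) computing, via formula (1.4) of \cite{Yoshioka-FM} applied to $\F_\E(H_0)$, that $\wh H_0=H_0$; and (iii) converting $G_2$-twisted $H_0$-stability of $M_{F_i}$ into $\mu_{H_0}$-semistability by an explicit comparison of twisted Hilbert polynomials, and then invoking Corollary 2.14 of \cite{Yoshioka-FM} to upgrade this to genuine $\mu_{H_0}$-stability. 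Without (i)--(iii) your argument identifies the right tool but does not yet constitute a proof; with them, it becomes exactly the paper's proof.
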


\begin{proof}
  More precisely we will make use of Proposition 1.5 and Corollary 2.14 in Yoshioka's paper \cite{Yoshioka-FM}. When possible, will also adopt  the notation of that paper.

Consider the Mukai vector $v_0=(1,0,0)$.  A sheaf $E$ on $S$ such that 
$v(E)=v_0$,  is of the form $E=\I_p$, for some $p\in S$. Moreover,
every polarisation $H$ is $v_0$-generic. As in Theorem 1.7 of \cite{Yoshioka-FM}, we set
$$
Y=M_H(v_0)\cong S
$$

We then let $\E=\I_\Delta\subset S\times Y$ and consider the diagram
$$
\xymatrix{&\I_\Delta\subset S\times Y\ar[dl]_{p_S}\ar[dr]^{p_Y}\\
S&&Y
}
$$
and the  Fourier-Mukai equivalence  attached to $\E$:
$$
\begin{aligned}
\F_\E: \mathcal D^b(S) & \longrightarrow \mathcal D^b(Y) \\
G & \longmapsto  R {p_Y}_*  (\mathcal{E} \otimes p_S^* G  )
\end{aligned}
$$ 
Set

$$
G_1=\E^\vee_{|S\times\{y\}}=\mc O_S\,,\qquad G_2=\E_{|\{x\}\times Y}=\I_x\subset \mc O_Y
$$
Let $v$ be a Mukai vector. We should think of $v$ as $v=v(F)$ or as $v=v_i=v(F_i)$, accordingly
$$
v=(0, D, \chi)\,,\quad\text{or}\quad v=(0, D_i, \chi_i)\,,\quad\text{with}\quad 
\chi\neq0\,,\quad \chi_i\neq0
$$

We can write
$$
v=(0, D, \chi)=l(1,0,0)+a(0,0,1)+ dH_0+\ov D,
$$
where
$$
\ov D\in (\operatorname{NS}(S)\otimes\QQ)\cap H_0^\perp, \quad dH_0+\ov D=D, \quad d\in\frac{1}{H^2}\ZZ
$$
and $l=0$, $a=\chi$. We are thus in case 2) of Theorem 1.7 in \cite{Yoshioka-FM},
so we must be sure that
\be\label{condition-a}
a>\max\left\{3, \frac{d^2H_0^2}{2}+1\right\}
\ee
Now $d$ (or the $d_i$'s) only depend on $D$ (or on the $D_i$'s) and this is a finite set of numbers.
Twisting $F$ and the $F_i$'s by $nH_0$ does not change the $H_0$-stability of the  $F_i$'s or the polystability of $F$
but allows one to increase at will the value of $a=\chi$ (or $a=\chi_i)$ insuring the validity of (\ref{condition-a}).
From Theorem 1.7 \cite{Yoshioka-FM} we infer that $\F_\E$ induces an isomorphism
$$
M_{H_0}(v)^{ss}\overset\cong\longrightarrow M^{G_2}_{\wh H_0}(\F_\E(v))^{ss}\,,\qquad (G_1=\mc O_S)
$$
which preserves $S$--equivalence classes.
We must then identify the polarisation $\wh H_0$, the Mukai vector $\F_\E(v)$ and address the question of $G_2$-twisted-$\wh H_0$-semistability. Since $F$ and the $F_i$'s are non-special we have
$$
\F_\E(F)=M_{F}\,,\qquad \F_\E(F_i)=M_{F_i}
$$
so that
$$
\F_\E(v)=(\chi, -D, 0)\,,\qquad \F_\E(v_i)=(\chi_i, -D_i, 0).
$$

Moreover,  
$$
0\to\F_\E(H_0)\to H^0(S,\mc O(H_0))\otimes\mc O_S \to \mc O_S(H_0)\to 0
$$
so that, by  formula (1.4) in \cite{Yoshioka-FM}, we get
$$
\wh H_0=-[\F_\E(H_0)]_{_{1}}=H_0
$$

The conclusion is  that, if $F_i$ is $H_0$-stable, then $M_{F_i}$ is 
$G_2$-twisted-$ H_0$-stable.  It remains to prove that $M_{F_i}$ is in fact
$\mu_{H_0}$-stable. For this we use
 Corollary 2.14 in \cite{Yoshioka-FM}. To put ourselves in the hypotheses of that corollary,
we must check that $M_{F_i}$ is $\mu_{H_0}$-semistable. Set $G=G_2(=\I_x)$ and $M=M_{F_i}$.
By definition of $G$--twisted stability we have
$$
\frac{\chi_G(N(nH_0))}{\rk_G(N)}< \frac{\chi_G(M(nH_0))}{\rk_G(M)}
,\quad 0\subsetneq N\subsetneq M\,,\quad{and}\,\quad n>>0\,,
$$
where
$$
\chi_G(x)=v(G)^\vee \cdot v( x)\,,\quad \text{and}\quad \rk_G(x)=[\ch(G)^\vee\cdot \ch (x)]_0
$$

Since
$$
v(M)=(\chi_i,\, -D_i, \,0)
$$
we have
$$
\frac{\chi_G(M(nH_0))}{\rk_G(M)}=\frac{n^2H_0^2}{2}-n\frac{D_i\cdot H}{\chi_i}
$$
Suppose $N=(r,s,t)$ with $s=-\Gamma$, then
$$
\chi_G(N(nH_0))=\left(1,\, nH_0,\,\frac{n^2H_0^2}{2}\right)\cdot (r,\,-\Gamma,\,t)
$$
Thus

$$
\frac{\chi_GN(nH_0))}{r}=\frac{n^2H_0^2}{2}-n\frac{\Gamma\cdot H_0}{r}+t\,.
$$
Now, $G$-twisted-$H_0$-stability means  that, for $n>>0$,
$$
t-n\frac{\Gamma\cdot H_0}{r}+\frac{n^2H_0^2}{2}
<-n\frac{D_i\cdot H}{\chi_i}+\frac{n^2H_0^2}{2}
$$
i.e.
$$
t+n\mu_{H_0}(N)
<n\mu_{H_0}(M)\,\qquad\text{for}\quad n>>0
$$
This shows that 
$\mu_{H_0}(N)
\leq\mu_{H_0}(M)$,
proving the semistability of $M$. This ends the proof of Theorem \ref{stabil-LM}
and thus also of Theorem \ref{formality-2}.
\end{proof}

\section{$G$-equivariant Kuranishi families}\label{G-equi-kuran}

Let us consider again  the Kuranishi map (\ref{formal kuranishi}).
It is important to remark that since the Kuranishi map is not unique, there is no a priori reason for it to be $G$-equivariant (with respect to the natural action of $G$ on the $\Ext$-groups), nor for there to be a natural action of $G$ on the base of a versal family. However, with some additional work, one can construct a $G$-equivariant map formal Kuranishi map $\kappa$  and a $G$-equivariant formal family $ (\wh {\mathcal F}_\kappa,  \wh \varphi_\kappa)=\{ (\mathcal {F}_n, \varphi_n) \in \Def_F(A/\mk m^n) \}$ parametrized by  the corresponding $D_{\kappa}$; roughly speaking, this means that for every $n$ the sheaf $\mathcal {F}_n $ is $G$-linearized with respect to the action of $G$ on $ {\kappa}^{-1}(0)_n=\Spec A/\mk m^n $. The reader may look at Rim's paper \cite{Rim} for a more detailed discussion. The main result of loc. cit. is the following theorem

\begin{theorem}[Rim] \label{rim} Let the notation be as above.
A $G$-equivariant formal Kuranishi map $\kappa$ and a $G$-equivariant formal family $ (\wh {\mathcal F}_\kappa,  \wh \varphi_\kappa)$ on ${ \kappa}^{-1}(0)$  exist and are unique up to unique $G$-equivariant isomorphism. \end{theorem}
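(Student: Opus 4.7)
The plan is to run the standard inductive Kuranishi construction while, at each order, using reductivity of $G=\prod\GL(n_i)$ to project the a priori non-equivariant choices onto their $G$-invariant parts. Concretely, I would first pick a model of the dgla $L^\bullet = R\Hom(F,F)$ on which $G=\Aut(F)$ acts by dgla automorphisms --- for instance the Dolbeault complex of $\Shhom(F,F)$ with $G$ acting by conjugation, or the \v{C}ech complex attached to a $G$-stable affine cover of $S$ (possible since $G$ acts trivially on $S$). The cohomology $\Ext^\bullet(F,F)$ then carries its canonical $G$-module structure. Because $G$ is reductive in characteristic zero, one can choose a $G$-equivariant Hodge-type splitting of $L^\bullet$ and a $G$-equivariant homotopy retraction onto $\Ext^\bullet(F,F)$; this equivariant contraction, obtained via the Reynolds projector, is the single place where reductivity is really used.

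With such an equivariant contraction in hand, I would run the standard Goldman--Millson/Manetti recursion to build $\kappa=\kappa_2+\kappa_3+\cdots$ as a formal map $\Ext^1(F,F)\to\Ext^2(F,F)_0$. At order two one has $\kappa_2(e)=\tfrac12[e,e]$, which is $G$-equivariant because the bracket is, and for $n>2$ the term $\kappa_n$ is obtained by applying the equivariant homotopy to an iterated bracket of previously constructed terms, so equivariance is preserved throughout. The zero locus $D_\kappa=\Spec A$ then inherits a $G$-action, and the Maurer--Cartan element on $D_\kappa$ descends, by the standard passage from dglas to deformations, to a compatible system $\{(\mathcal F_n,\varphi_n)\}$. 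At each order the lifting of $\mathcal F_n$ to a $G$-linearized deformation $\mathcal F_{n+1}$ is obstructed by a class in $H^1(G,-)$ for a rational $G$-module, which vanishes by reductivity, so the $G$-linearizations assemble into a bona fide $G$-equivariant formal family.

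For uniqueness, given a second $G$-equivariant choice $(\kappa',\wh{\mathcal F}',\wh\varphi')$, the ordinary (non-equivariant) versal property produces an isomorphism $f\colon D_{\kappa'}\to D_\kappa$ together with an isomorphism of formal families. Although $f$ need not be equivariant, its induced map on tangent spaces is canonical, hence automatically $G$-equivariant. The collection $\{g\cdot f\cdot g^{-1}\}_{g\in G}$ lies in a single coset modulo the automorphism group of the versal family, and averaging via the Reynolds projector produces a $G$-equivariant isomorphism which, by the uniqueness of the induced tangent map, is itself unique.

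The main obstacle I anticipate is precisely the equivariant lifting step inside the inductive construction of the family: one must verify, for each $n$, that the appropriate $\Ext$-style obstruction to upgrading a (compatible) non-equivariant lift $\mathcal F_{n+1}$ to a $G$-linearized one vanishes, and that the chosen linearizations are compatible with the restrictions $\mathcal F_{n+1}\otimes_A A/\mathfrak m^{n+1}\cong\mathcal F_n$. This reduces to a delicate bookkeeping of the automorphism sheaves of infinitesimal deformations combined with the vanishing $H^i(G,-)=0$ ($i\geq 1$) for rational representations of the reductive group $G$; once this cohomological vanishing is in place, each inductive step closes up and both existence and uniqueness follow.
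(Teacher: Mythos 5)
The paper never proves this statement: Theorem \ref{rim} is quoted from Rim's paper \cite{Rim} and used as a black box, so there is no internal proof to compare against. Your strategy is nonetheless the standard one, and it is consistent with the one technique in this circle of ideas that the paper does spell out, namely Proposition \ref{starr2}, where an isomorphism is promoted to a $G$-equivariant one order by order using that each fiber of $\isob_r\to\isob_{r-1}$ is a torsor under a rational representation $K_r$ of $G$ and that $H^1(G,K_r)=0$ for $G$ reductive. Your existence argument --- an equivariant dgla model plus an equivariant homotopy retraction obtained from the Reynolds projector --- is exactly the construction of Appendix A of \cite{Lehn-Sorger06}, which the paper cites while noting that it produces the equivariant Kuranishi \emph{map} but not the equivariant \emph{family}; your identification of the family as the delicate step, to be handled by vanishing of higher group cohomology, is the right diagnosis and is where Rim's actual work lies.

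Two points need tightening. First, ``averaging $\{g\cdot f\cdot g^{-1}\}_{g\in G}$ via the Reynolds projector'' cannot be applied to the formal isomorphism all at once: the set of isomorphisms of complete local rings is not a linear $G$-space, so the averaging (equivalently, the search for a $G$-fixed point) must be carried out order by order on the torsors $p_r^{-1}(\phi_{r-1})$ under the vector spaces $K_r$, exactly as in Proposition \ref{starr2}; as written, the sentence does not parse into an argument. Second, your closing claim that the resulting $G$-equivariant isomorphism is unique ``by the uniqueness of the induced tangent map'' is a non sequitur: a hull admits many automorphisms inducing the identity on its tangent space, so the uniqueness clause of Rim's theorem requires a separate argument showing that the only $G$-fixed automorphism of the pair (hull, linearized family) over the identity on tangent spaces is the identity --- morally another order-by-order torsor computation, this time showing that the relevant $G$-invariants $H^0(G,\cdot)$ of the ambiguity spaces vanish, which is not addressed by the $H^{\ge 1}$ vanishing you invoke elsewhere.
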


The abstract nature of this theorem makes it often hard to compute, in practice, $G$--equivariant maps and families.
A very nice and explicit construction of a $G$-equivariant formal Kuranishi map (even though not of a $G$--equivariant family) is given in Appendix A of \cite{Lehn-Sorger06}; as for $G$-equivariant families see Proposition \ref{etale slice} below.

The next important feature of the Kuranishi map and of the Kuranishi family comes in relation to the Quot scheme.
Let us start with some notation.
We denote by $\Quot_{H_0}$ the irreducible component of an appropriate Quot scheme, constructed by using the polarization $H_0$,  containing the $H_0$-polystable sheaf $F$, and we let  $\GL(\VV)$ be the natural group acting on it (the appropriate Quot scheme needed for the proof of the main theorem will be specified in Section \ref{remarks stability criteria}). Let  $\Quot^{ss}_{H_0}$ be the open subset parametrizing $H_0$-semistable sheaves and let us fix a point $q_0 \in \Quot^{ss}_{H_0}$, corresponding to the sheaf $F$.  It is well known that $\Stab(q_0)\cong G$, so that the point $q_0$ has reductive stabilizer. One can therefore consider an {\it \'etale slice }
\be \label{Q slice}
Z \subset \Quot_{H_0}^{ss}
\ee
at $q_0$ for the action of $\GL(\VV)$ on $\Quot_{H_0}^{ss}$ (cf. \cite{Drezet-Luna}).  By definition, the \'etale slice $Z$ is a locally closed and $G$-invariant  affine subvariety of $\Quot_{H_0}^{ss}$ containing $q_0$,  having the property that the natural morphism
\be \label{etale morphism}
\epsilon: Z \sslash G \to \Quot_{H_0}^{ss} \sslash \GL(\VV)=M_{H_0},
\ee
is \'etale (for a more precise statement see, for example, Theorem 5.3 of \cite{Drezet-Luna}). For later use we now present a brief sketch of the construction of $Z$.

It is known that since $F$ is an $H_0$-semistable point, one can use the natural $\GL(\VV)$-linearized ample line bundle on $\Quot_{H_0}$ (cf. \cite{Huybrechts-Lehn}, Section 4.3), to define a $\GL(\VV)$-equivariant embedding of an affine open neighborhood of $q_0 \in \Quot_{H_0}^{ss}$ into an affine space $\mathbb A^N$, acted on linearly by $\GL(\VV)$, so that the $G$-fixed point $q_0\in \Quot_{H_0}^{ss}$ is mapped to the origin in $\mathbb A^N$. Recall that the tangent space to the $\GL(\VV)$-orbit of $q_0$ is canonically identified with $\Ext^1(F,F)$ and that the affine space $\mathbb A^N$ can be identified with its tangent space $T_0 \mathbb A^N$  at the origin. Consider the natural embedding $
\Ext^1(F,F) \subset T_0 \mathbb A^N$, and let $t: \Ext^1(F,F) \to T_0 \mathbb A^N \cong \mathbb A^N$ be the composition.
One can check that
 \[
Z= t(\Ext^1(F,F)) \cap \Quot_{H_0}^{ss},
\]
satisfies (\ref{etale morphism}) above. We would like to think of $Z$ as sitting $G$--equivariantly inside $\Ext^1(F, F)$. This is possible locally:  if $s:T_0 \mathbb A^N \to \Ext^1(F,F)$ is a $G$-equivariant splitting of the natural inclusion, then the composition
\[
Z \hookrightarrow \mathbb A^N\cong T_0 \mathbb A^N \stackrel{s}{\to} \Ext^1(F,F)
\]
is surjective at the level of tangent spaces and hence is \'etale onto its image. This means that if we look at the completion of the \'etale slice $\wh Z$ at the point $q_0$, then we can think of it as sitting \emph{$G$-equivariantly} inside $\wh{\Ext^1(F,F)}$.
Over $\Quot_{H_0}^{ss}\times S$ there is a universal family $\wt\F$ of sheaves and we
consider its restriction to $Z \times S$:
\be \label{family on slice}
\F=\wt\F_{|Z \times S}
\ee
By construction, the action of $G$ on $Z$ is {\it modular}, i.e.,  for $q\in Z$ and $g\in G$ the sheaf $\F_q$ is isomorphic to the sheaf
$\F_{gq}$ and, moreover, since it is the restriction of $\wt\F$, the family $\mc F$ is $G$-linearized (this will play an important role in Section \ref{linear}).
Notice that since $\wh Z \subset  \wh{\Ext^1(F,F)}$ parametrizes a formal family we can construct, again using the definition of the obstruction space, a formal Kuranishi map ${ \kappa}_Z:  \wh{\Ext^1(F,F)}\longrightarrow \Ext^2(F,F)_0$ such that
\be \label{Z in Ext}
\wh Z \cong {\kappa}_Z^{-1}(0).
\ee
Notice, also that since this family is $G$-linearized it is also $G$-equivariant in the sense of Rim.
We sum up these results in the following proposition

\begin{prop} \label{etale slice}
Let $\kappa:  \wh{\Ext^1(F,F)} \rightarrow \Ext^2(F,F)_0 $ be a formal $G$--equivariant Kuranishi map for the polystable sheaf $F$ and let $\wh {\F}$ be a $G$--equivariant versal family parametrized by ${\kappa}^{-1}(0)$ (which exist by Rim's theorem). Then the local completion $\wh Z$ of $Z$ at $q_0$ is isomorphic to ${\kappa}^{-1}(0)$ and, under this isomorphism, the universal family $\mc F$ induces $\wh {\F}$. Moreover, there is a unique $G$--equivariant isomorphism $\wh Z \cong {\kappa}^{-1}(0)$. In particular, ${\kappa}^{-1}(0)$ and $\wh {\F}$ are $G$--equivariantly algebraizable.
\end{prop}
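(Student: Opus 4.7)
The strategy is to combine the geometric construction of the étale slice $Z$ with the uniqueness part of Rim's Theorem \ref{rim}. The key point is that $Z$ carries, by construction, an honest algebraic $G$-linearized family of sheaves, and its formal completion at $q_0$ is automatically a $G$-equivariant formal deformation of $F$. Thus it is enough to show that this formal deformation is versal in the $G$-equivariant sense; uniqueness from Rim then does the rest.

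I would first unpack the ambient structure. Via the $G$-equivariant splitting $s\colon T_0\mathbb{A}^N \to \Ext^1(F,F)$ which exists because $G$ is reductive and $G$ acts linearly on $\mathbb{A}^N$, the étale map $Z \to \Ext^1(F,F)$ becomes $G$-equivariant at formal neighbourhoods of $q_0$ and $0$. The restriction $\mathcal{F} = \wt\F|_{Z\times S}$ of the universal Quot family is $G$-linearized by construction, hence its completion along $\{q_0\}\times S$ is a compatible system $\{(\mathcal{F}_n,\varphi_n) \in \Def_F(\mathcal{O}_{\wh Z}/\mk{m}^n)\}$ carrying a $G$-action covering the natural $G$-action on $\wh Z$.

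Next I would verify versality. The tangent space $T_{q_0}Z$ is naturally $\Ext^1(F,F)$, which matches the tangent space $T_0\kappa^{-1}(0)$, and obstructions to lifting $\mathcal{F}_n$ from $\mathcal{O}_{\wh Z}/\mk{m}^n$ to $\mathcal{O}_{\wh Z}/\mk{m}^{n+1}$ land in $\Ext^2(F,F)_0$ by the standard deformation theory of $H_0$-semistable sheaves with trivial determinant of automorphisms. Consequently, the same inductive construction that produces $\kappa$ can be performed inside $\wh{\Ext^1(F,F)}$ for $\wh Z$, giving a formal equation $\kappa_Z\colon \wh{\Ext^1(F,F)} \to \Ext^2(F,F)_0$ with $\wh Z \cong \kappa_Z^{-1}(0)$ as in (\ref{Z in Ext}); reductivity of $G$ allows one at each inductive step to choose the lift of $\kappa_Z$ to be $G$-equivariant by averaging over $G$. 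This shows that $(\wh Z,\wh{\mathcal{F}|_{Z\times S}})$ is a $G$-equivariant formal versal deformation of $F$.

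The conclusion is then immediate from Rim's uniqueness statement in Theorem \ref{rim}: any two $G$-equivariant formal versal deformations of $F$ are isomorphic by a unique $G$-equivariant isomorphism, compatible with the families. This identifies $\wh Z$ with $\kappa^{-1}(0)$ uniquely as $G$-equivariant formal schemes, and transports the formal family coming from $\mathcal{F}$ to $\wh{\mathcal{F}}$. Finally, $G$-equivariant algebraizability is free of charge: $Z$ is an affine $G$-scheme and $\mathcal{F}$ a $G$-linearized coherent sheaf on $Z\times S$, so the pair $(Z,\mathcal{F})$ itself is an algebraization of $(\kappa^{-1}(0),\wh{\mathcal{F}})$.

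The delicate step will be carrying out the $G$-equivariance inductively when constructing $\kappa_Z$ and the formal family on $\wh Z$, in particular making sure that the choice of $G$-equivariant splitting $s$ does not force choices that interfere with the reductivity averaging. This is precisely what Rim's theorem packages cleanly, and once invoked, the rest of the statement follows almost formally.
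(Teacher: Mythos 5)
Your proposal is correct and follows essentially the same route as the paper: it builds the slice $Z$ with its $G$-linearized restriction of the universal Quot family, identifies $\wh Z$ $G$-equivariantly with the zero locus of a formal Kuranishi map inside $\wh{\Ext^1(F,F)}$ via the reductive splitting $s$, and then invokes the uniqueness clause of Rim's theorem to get the unique $G$-equivariant identification with $\kappa^{-1}(0)$ and the matching of families, with algebraization coming for free from the algebraicity of $(Z,\F)$. This is exactly how the paper argues (the proposition there is stated as a summary of the preceding discussion plus Rim's uniqueness), so no further comparison is needed.
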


When the dgla-formality property holds for $F$, we can always find a Kuranishi map so that the corresponding base of the versal family for $F$ is the completion at the origin of a complete intersection of quadrics in affine space. In particular,
the base of the Kuranishi family coincides with its tangent cone and the Kuranishi map is automatically $G$-equivariant.

\begin{rem}
Consider a polystable sheaf $F$ enjoying the dgla-formality property.
We then have at our disposal two Kuranishi families. The base, $\kappa_2^{-1}(0)$, of one family
is a complete intersection of quadrics, it is  naturally acted on by $G$, but the family parametrized by it  has no a priori natural $G$-linearization.
The base of the second family is an analytic neighbourhood of a point in an \'etale slice $Z\subset \Quot$
and the family of sheaves parametrized by it has a natural $G$-linearization. The advantage of the first family is the simplicity of its base, while the advantage of the second is its $G$-linearization.
\end{rem}

The central result in this section is the following proposition that reconciles these two advantages. 
 
\begin{prop}
 \label{Q-is-a-cone}Let $F$ be a polystable sheaf as above. Assume that  $F$ satisfies the  dgla-formality condition. Let then  $\kappa=\kappa_2$ be the quadratic Kuranishi map.
 Let $q_0\in \Quot_{H_0}^{ss}$ be the point corresponding to $F$
 and let $(Z,q_0)$ be an \'etale slice through $q_0$.
Then, there is a $G$-equivariant local analytic isomorphism,
\[
\psi:(Z,q_0) \cong (k_2^{-1}(0), 0).
\]
inducing the identity on tangent spaces: $T_{q_0}(Z)=\Ext^1(F,F)=T_0(k_2^{-1}(0))$.
In particular, there is a $G$-linearized deformation of $F$ parametrized by a $G$-equivariant analytic open neighborhood of the origin in $ k_2^{-1}(0)$.
\end{prop}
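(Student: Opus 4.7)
The plan is to combine the formality of $F$ with the uniqueness statement in Rim's theorem and an equivariant approximation argument. Formality implies that a formal Kuranishi map for $F$ reduces to its quadratic part, so we may take $\kappa = \kappa_2$, the cup product $e \mapsto e \cup e$, and its zero locus becomes the honest algebraic complete intersection of quadrics $V := \kappa_2^{-1}(0) \subset \Ext^1(F, F)$. Since cup product is canonical, $\kappa_2$ is automatically $G$-equivariant, so $V$ is $G$-invariant and its formal completion $\widehat{V}$ at the origin, together with the standard universal family (which is $G$-equivariantly algebraizable by Proposition \ref{etale slice}), furnishes a $G$-equivariant formal Kuranishi family in the sense of Rim.

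Next, Proposition \ref{etale slice} shows that the completion $\widehat{Z}$ of the \'etale slice at $q_0$ also carries a $G$-equivariant formal Kuranishi family, with the $G$-linearization inherited from the universal family on the Quot scheme. By Theorem \ref{rim} the $G$-equivariant formal Kuranishi family is unique up to unique $G$-equivariant isomorphism, so we obtain a canonical $G$-equivariant formal isomorphism
\[
\widehat{\psi}: \widehat{Z} \stackrel{\sim}{\longrightarrow} \widehat{V}
\]
inducing the identity on tangent spaces $T_{q_0}Z = \Ext^1(F, F) = T_0 V$.

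The final and most delicate step is to lift $\widehat{\psi}$ to a genuine local analytic $G$-equivariant isomorphism. Both $(Z, q_0)$ and $(V, 0)$ are germs of affine $G$-schemes at fixed points for the reductive group $G$, and both carry linearizable actions: the construction preceding Proposition \ref{etale slice} exhibits a $G$-equivariant analytic embedding $(Z, q_0) \hookrightarrow (\Ext^1(F, F), 0)$ via the splitting $s$, while $V$ is tautologically a $G$-invariant subvariety of $\Ext^1(F, F)$. Applying the equivariant form of Artin's approximation theorem for formal maps between germs of $G$-schemes with reductive structure (obtained by combining ordinary Artin approximation with Reynolds-operator averaging over $G$), we produce an analytic $G$-equivariant isomorphism $\psi: (Z, q_0) \to (V, 0)$ inducing the identity on tangent spaces. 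Transporting the $G$-linearized family $\mathcal{F}$ from $Z$ via $\psi$ yields the ``In particular'' clause. The main obstacle is precisely this passage from formal to analytic while preserving equivariance; reductivity of $G$ and the fact that both $Z$ and $V$ sit as invariant subgerms of the linear representation $\Ext^1(F, F)$ are exactly what make this possible.
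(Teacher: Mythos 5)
There is a genuine gap at the heart of your argument, precisely at the point where you invoke Rim's uniqueness theorem. You claim that $\widehat{V}=\widehat{\kappa_2^{-1}(0)}$, ``together with the standard universal family,'' is a $G$-equivariant formal Kuranishi family in the sense of Rim, and you then conclude that Rim's uniqueness gives a canonical $G$-equivariant formal isomorphism $\widehat{Z}\cong\widehat{V}$. But $G$-equivariance in Rim's sense is not the statement that the map $\kappa_2$ is $G$-equivariant (which is automatic, since the cup product is canonical); it is the statement that the versal \emph{family of sheaves} over $\kappa_2^{-1}(0)$ carries a $G$-linearization compatible with the $G$-action on the base. Formality produces $\kappa_2^{-1}(0)$ as the hull via the dgla formalism, but gives no $G$-linearized family over it; and Proposition \ref{etale slice}, which you cite, applies to \emph{a} $G$-equivariant Kuranishi map whose existence is guaranteed by Rim's theorem --- there is no a priori reason that this map can be taken to be $\kappa_2$. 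Reconciling these two families (one with a simple base but no linearization, one with a linearization but an unknown base) is exactly the content of the proposition, so your argument is circular at this step. The paper gets around this obstruction by a different mechanism: one knows that $\widehat{Z}\cong\kappa_2^{-1}(0)$ abstractly (not equivariantly) by formality, and that the isomorphism can be chosen $G$-equivariant to second order (where the Kuranishi map is unique); one then propagates equivariance order by order using a torsor argument --- the fiber of $\mathbf{Iso}_r(A,B)\to\mathbf{Iso}_{r-1}(A,B)$ over a $G$-fixed point is a $K_r(A)$-torsor with compatible $G$-action, classified by $H^1(G,K_r(A))$, which vanishes because $G$ is reductive (Proposition \ref{starr2}).

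A secondary weakness is the formal-to-analytic step. You appeal to an ``equivariant Artin approximation obtained by Reynolds-operator averaging,'' but averaging a non-equivariant analytic approximation of a map between two $G$-germs does not obviously preserve the property of being an isomorphism onto $\kappa_2^{-1}(0)$, nor even that the image lands in it. The paper instead uses the Bierstone--Milman result (Proposition \ref{BM}), and to apply it must first carry out a further lifting argument showing that the $G$-equivariant formal isomorphism $\widehat{Z}\to\kappa_2^{-1}(0)$ is induced by a $G$-equivariant formal automorphism of the ambient $\widehat{\Ext^1(F,F)}$ --- again by a surjectivity-on-fixed-points torsor argument. So both the formal equivariance and the algebraization require work that your proposal elides.
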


The proof of the preceding proposition is based on two results. The first one is a formal version of the proposition itself, while the second one consists in a $G$-equivariant version of Artin's approximation theorem.

Before starting with the formal result, we need some notation.
Let $(A, \mathfrak m)$  and $(B, \mathfrak n)$
be  local, complete $k$-algebras. For $s>r$, let
$$
\eta_{r,s}:A/\mathfrak m^{s+1}\to A/\mathfrak m^{r+1}\,,\qquad \zeta_{r,s}:B/\mathfrak m^{s+1}\to B/\mathfrak m^{r+1}
$$
be the natural projections.

\begin{defin}\label{notation-starr}Let $(A, \mathfrak m)$  and $(B, \mathfrak n)$
be  as above. A formal isomorphism
between $(A, \mathfrak m)$  and $(B, \mathfrak n)$ is a collection $u=\{u_r\}_{r\in \mathbb N}$ of 
compatible isomorphism $u_r: A/\mathfrak m^{r+1}\to B/\mathfrak n^{r+1}$, for $r>0$. This means that
 $\zeta_{r,s}u_s=u_r\eta_{r,s}$, for $s>r$. When this compatible system exists we say that the single isomorphisms $u_r$ extend to the formal isomorphism $u$.
\end{defin}

Consider the algebraic group of $k$-algebra automorphisms of $A/\mathfrak m^{r+1}$.
\[
\Aut_{r}(A):=\Aut_{k\text{-alg}}(A/\mathfrak m^{r+1})
\]
with the obvious projections
\[
p_{r,s}: \Aut_{s}(A)\longrightarrow \Aut_{r}(A)\,,\quad s>r.
\]

\be\label{aut-any-ord}
\mathbf{Aut}_{r}(A)=\{h_r\in \Aut_{r}(A)\,\,|\,\,h_r\,\,\text{extends to a formal automorphism}\}
\ee

\begin{defin} An action of $G$ on $(A, \mathfrak m)$ is the datum of a sequence of group homomorphism
\be
\label{g-action}
u_r : G\longrightarrow \mathbf{Aut}_{r}(A)
\ee
such that $p_{r,s}u_s=u_r$ for $s>r$.
\end{defin}

The proof of the following proposition, which is the first ingredient in the proof of Proposition 
\ref{Q-is-a-cone}, was communicated to us by Jason Starr.

\begin{prop}[J. Starr]\label{starr2}Let $(A, \mathfrak m)$  and $(B, \mathfrak n)$
be  local, complete $k$-algebras
acted on by a reductive algebraic group $G$.
Assume that there is a formal  isomorphism between $(A, \mathfrak m)$  and $(B, \mathfrak n)$ inducing a
$G$-equivariant isomorphism
from $A/\mathfrak{m}^2$ to $B/\mathfrak{n}^2$. Then there is a $G$-equivariant formal isomorphism between $(A, \mathfrak m)$  and $(B, \mathfrak n)$.
\end{prop}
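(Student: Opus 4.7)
The plan is to build the equivariant formal isomorphism one order at a time, using the linear reductivity of $G$ to kill the obstruction to equivariance at each step. I would first reduce the problem to a conjugation problem inside $\autb(A)$: let $\alpha$, $\beta$ denote the $G$-actions on $A$ and $B$, let $u=\{u_r\}$ be the hypothesized formal isomorphism, and let $\gamma := u^{-1}\circ \beta\circ u$ be the action on $A$ pulled back from $B$. The assumption that $u_1$ is $G$-equivariant is precisely $\alpha \equiv \gamma \pmod{\mathfrak m^2}$, and constructing an equivariant formal isomorphism $v$ becomes equivalent to constructing a formal automorphism $\phi$ of $A$ with $\phi^{-1}\gamma\phi = \alpha$, after which $v := u\circ\phi$ does the job.

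For the inductive step, assume we have $\phi_{r-1}\in \mathbf{Aut}_{r-1}(A)$ with $\phi_{r-1}^{-1}\gamma\phi_{r-1}\equiv \alpha \pmod{\mathfrak m^r}$. Choose a lift $\tilde\phi_r \in \mathbf{Aut}_r(A)$, which exists because, by definition, $\phi_{r-1}$ comes from a formal automorphism of $A$, whose $r$-th truncation supplies the lift. Define the order-$r$ action $\psi_r := \tilde\phi_r^{-1}\gamma\tilde\phi_r$, which agrees with $\alpha$ modulo $\mathfrak m^r$, and set
\[
\eta(g) \;:=\; \psi_r(g)\,\alpha(g)^{-1} \;\in\; K_r,
\]
where $K_r := \ker\bigl(\mathbf{Aut}_r(A)\to \mathbf{Aut}_{r-1}(A)\bigr)$. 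A standard deformation-theoretic calculation identifies $K_r$, as an abelian unipotent group, with the finite-dimensional rational $G$-representation $\operatorname{Hom}_k(\mathfrak m/\mathfrak m^2,\, \mathfrak m^r/\mathfrak m^{r+1})$, with $G$ acting through $\alpha$. The fact that $\psi_r$ and $\alpha$ are both homomorphisms $G \to \mathbf{Aut}_r(A)$ turns the identity $\psi_r(g_1 g_2) = \psi_r(g_1)\psi_r(g_2)$ directly into the $1$-cocycle relation
\[
\eta(g_1 g_2) \;=\; \eta(g_1)\cdot{}^{g_1}\!\eta(g_2),
\]
and $\eta$ is manifestly regular in $g$.

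Because $G$ is reductive, every rational representation is completely reducible, so $H^1_{\mathrm{rat}}(G, K_r)=0$. Hence $\eta$ is a coboundary: there exists $\xi\in K_r$ with $\eta(g) = {}^g\xi\cdot\xi^{-1}$. Setting $\phi_r := \tilde\phi_r\cdot\xi^{-1}$, a short computation using the abelianness of $K_r$ gives $\phi_r^{-1}\gamma\phi_r \equiv \alpha\pmod{\mathfrak m^{r+1}}$, and $\phi_r$ remains in $\mathbf{Aut}_r(A)$ because elements of $K_r$ trivially extend to formal automorphisms (extend by the identity at all higher orders), so the correction $\xi^{-1}$ stays in the liftable subgroup. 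The inverse limit of $\{\phi_r\}$ is the sought formal automorphism $\phi$, and then $v := u\circ\phi$ is a $G$-equivariant formal isomorphism. The step I expect to require the most care is the bookkeeping that verifies (i) that $\eta$ is a \emph{regular} $1$-cocycle rather than merely a set-theoretic one, so that the algebraic-group vanishing $H^1_{\mathrm{rat}}(G, K_r)=0$ genuinely applies, and (ii) that every modification performed in the induction remains inside the distinguished subgroup $\mathbf{Aut}_r(A)$ of truncations that extend to formal automorphisms; both ultimately rest on the standard identification of $K_r$ as a unipotent group scheme equipped with its natural $G$-action.
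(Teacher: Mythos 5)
Your argument is correct and is essentially the paper's proof in different packaging: the paper phrases the order-by-order induction as the triviality of a $G$-equivariant torsor under $\mathbf{K}_r(A)$ (classified by $H^1(G,\mathbf{K}_r(A))=0$ by reductivity of $G$), which, once a lift $\tilde\phi_r$ is chosen, unwinds to exactly your cocycle computation. The one repair needed is in your liftability bookkeeping: one cannot in general ``extend an element of $K_r$ by the identity at all higher orders'' since $A$ carries no canonical grading, but your cocycle $\eta$ in fact takes values in the liftable subgroup $\mathbf{K}_r=K_r\cap\autr(A)$ (both $\psi_r(g)$ and $\alpha(g)$ being truncations of formal automorphisms), so you should simply invoke the vanishing $H^1(G,\mathbf{K}_r)=0$ in that subrepresentation, which hands you a liftable $\xi$ and keeps the induction inside $\autr(A)$.
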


\proof 

Consider the affine scheme of isomorphisms between $A/\mathfrak m^{r+1}$ and 
$B/\mathfrak n^{r+1}$:
$$
\Iso_r(A,B)=\Iso(A/\mathfrak m^{r+1}, B/\mathfrak n^{r+1})
$$
In analogy with (\ref{aut-any-ord}) we denote by $\mathbf{Iso}_r(A,B)$
the closed subscheme of $\Iso_r(A,B)$ of those isomorphisms
that extend to formal isomorphisms. By definition the projection map 
\be \label{projr}
p_r: \isob_r(A, B)\longrightarrow \isob_{r-1}(A, B)
\ee
is surjective. Since, by hypothesis, $\isob_1(A, B)$ is non-empty, so is $\isob_r(A, B)$, for every $r>1$.

The automorphism group $\autr(A)$
(resp. $\autr(B)$) acts  on the right (resp. on the left) on $\Iso_r(A,B)$.
The induced actions on $ \isob_r(A,B)$ are faithful and transitive, so that $\isob_r(A,B)$  is both an $\autr(A)$-torsor
and a $\autr(B)$-torsor. Using (\ref{g-action}), we get an induced action of  $G$ on $\Iso_r(A,B)$ and on $\isob_r(A,B)$, via conjugation.  A  fixed point in $\Iso_r(A,B)$ for this action is nothing but a $G$-equivariant isomorphism between $A/\mathfrak m^{r+1}$ and $B/\mathfrak n^{r+1}$. By Hypothesis $\isob_1(A,B)$ has a $G$-fixed point.
We must show that, for $r>1$, the projection map (\ref{projr})
is surjective on $G$-fixed points.

For $r>1$, set
$$
K_r(A)=\ker\{\Aut_{r}(A)\to \Aut_{r-1}(A)\}\,,\quad
\mathbf {K}_r(A)=\ker\{\mathbf{Aut}_{r}(A)\to \mathbf{Aut}_{r-1}(A)\}
$$
Both $K_r(A)$ and $\mathbf {K}_r(A)$ are normal abelian subgroups.
We think of them as additive groups and in fact as finite-dimensional  $k$-vector spaces. The group $G$ acts on these two vector spaces spaces via conjugation, yielding two finite dimensional linear representations of $G$.  Given a $G$-fixed point $\phi_r\in \isob_{r-1}A, B)$ the fiber $F=p_r^{-1}(\pi_{r-1})$is a $K_r(A)$-torsor with compatible $G$-action, meaning that the natural morphism $K_r(A) \times F\to F$ is $G$-equivariant. Now $K_r(A)$-torsor with compatible $G$-action are classified by $H^1(G, K_r(A))$
\footnote{To see this from a topological point of view, set $V=K_r(A)$ and consider the  $V$-torsor over $EG$ given by
$$
\wt P=EG\times F\to EG
$$
By the $G$-equivariance of $F\times V\to $F  one may form the quotient
$$
P=(EG\times F)/G
$$
and obtain a $V$-torsor over $BG$, i.e an element in  $H^1(BG, V)=H^1(G,V)$. Whenever this element is trivial the $V$-torsor $P$ is trivial and thus $\wt P$ and  $F$ must be  trivial as well. For a more algebraic argument one may substitute $EG$ with $\{*\}$ and $BG$ with the stack $\{*\}/G$,
and proceed in a similar way.
 }. Since $G$ is reductive this cohomology group vanishes so that $F$ 
must be the trivial torsor, meaning that there is a $G$-fixed point 
$\phi_r$ over $\phi_{r-1}$.
\endproof

The second ingredient for the proof of Proposition \ref{Q-is-a-cone} is the following result by Bierstone and Milman. 

\begin{prop}[Bierstone and Milman \cite{Bierstone-Milman}]  \label{BM} Let $(X, x_0)\subset \CC^n$ and 
$(Y, y_0)\subset\CC^p$ be germs of algebraic varieties acted on by a reductive group $G$. Suppose
there is a $G$-equivariant morphism from $(\wh \CC^n)_{x_0}$ to $(\wh \CC^p)_{y_0}$
inducing an isomorphism 
$\wh \phi$
between the formal neighbourhoods
 $(\wh X, x_0)$ and $(\wh Y, y_0)$. Let $c \in \mathbb N$. Then there is a local analytic $G$-equivariant isomorphism $\phi$ 
between $(X, x_0)$ and $(Y, y_0)$ which is equal to $\wh \phi$ up to order $c$.
\end{prop}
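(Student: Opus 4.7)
The plan is to reduce the statement to the $G$-equivariant Artin approximation theorem. First, since $G$ is reductive and the points $x_0 \in \CC^n$, $y_0 \in \CC^p$ are $G$-fixed, Cartan's linearization lemma yields local analytic $G$-equivariant coordinates in which $G$ acts by linear representations $V \cong \CC^n$ and $W \cong \CC^p$. In these coordinates the ideals $I_X \subset \CC\{V^*\}$ and $I_Y \subset \CC\{W^*\}$ defining the germs are $G$-stable, so one may work throughout with $G$-equivariant power series and polynomial truncation commutes with the $G$-action.

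Next, I would encode the desired conclusion as a system of $G$-equivariant analytic equations in the coefficients of a pair of unknown maps. An analytic $G$-equivariant isomorphism of germs $(X,x_0) \to (Y,y_0)$ is the datum of two $G$-equivariant convergent power series maps $\phi:(V,0)\to(W,0)$ and $\psi:(W,0)\to(V,0)$ satisfying the membership relations $\phi^*(I_Y)\subset I_X$ and $\psi^*(I_X)\subset I_Y$, together with the germ-inverse relations $\phi\circ\psi-\id_W \in I_Y\cdot\mathcal O_{W,0}^p$ and $\psi\circ\phi-\id_V \in I_X\cdot\mathcal O_{V,0}^n$. Each relation cuts out a Noetherian system of convergent analytic equations on the coefficients of $(\phi,\psi)$, and the $G$-action on these coefficients preserves the system. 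By hypothesis $\hat\phi$ is a formal isomorphism of formal germs, hence admits a formal inverse $\hat\psi$, and the pair $(\hat\phi,\hat\psi)$ constitutes a formal $G$-equivariant solution to this system.

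Finally, I would invoke the equivariant Artin approximation theorem of Bierstone and Milman to produce a convergent $G$-equivariant analytic solution $(\phi,\psi)$ agreeing with $(\hat\phi,\hat\psi)$ modulo $\mathfrak m^{c+1}$. Since we may take $c\ge 1$, the differential $d\phi_{x_0}$ equals $d\hat\phi_{x_0}$, which is a linear isomorphism $V\to W$; therefore $\phi$ is a local analytic $G$-equivariant isomorphism of ambient germs carrying $(X,x_0)$ onto $(Y,y_0)$, as required. The main obstacle is the equivariant approximation step itself: the classical Artin approximation yields a convergent solution but forgets equivariance, while naive averaging over a maximal compact subgroup of $G$ destroys convergence. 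Bierstone and Milman circumvent this by decomposing the space of formal solutions into $G$-isotypic components, each a finitely generated module over the ring of $G$-invariants $\mathcal O(V)^G$, and then applying the relative version of Artin approximation isotype by isotype; the reductivity of $G$ is exactly what guarantees that this decomposition matches on the formal and the convergent sides, so that approximation respects both analyticity and equivariance simultaneously.
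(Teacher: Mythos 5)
Your proposal is correct and follows the same route the paper takes: the paper offers no argument of its own, observing only that the statement ``follows immediately from the remarks on pages 121--122'' of Bierstone--Milman, which is precisely the reduction to their equivariant Artin approximation theorem that you carry out explicitly (linearize the action, encode the two membership relations and the two inverse relations as a $G$-invariant analytic system with auxiliary unknowns, approximate the formal solution to order $c\ge 1$). The one step you assert without comment is that the formal inverse $\wh\psi$ can be realized as a $G$-equivariant map of the \emph{ambient} formal spaces $(\wh \CC^p)_{y_0}\to(\wh \CC^n)_{x_0}$; this needs a small argument --- order by order the lifts of $\wh\phi^{-1}$ form a torsor under a rational $G$-module, and reductivity kills the obstruction to an equivariant choice, exactly as in the paper's Proposition \ref{starr2} --- but it is routine and does not affect the validity of your proof.
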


As stated above, the result, does not formally appear in \cite{Bierstone-Milman}, but it 
follows immediately from the remarks on pages 121-122 therein.
We are now ready for the proof of Proposition \ref{Q-is-a-cone}.

\begin{proof} [Proof of Proposition \ref{Q-is-a-cone}] Recall the way in which
the \'etale slice $Z$ is constructed. From the discussion just above Proposition \ref{etale slice}
 and from Rim's uniqueness theorem, we
may assume
the existence of a formal $G$-equivariant Kuranishi map $h$ such that
$$
\wh Z= h^{-1}(0)\subset \wh\Ext^1(F,F)\,.
$$
Clearly,
$$
\qquad h_2(e)=\kappa_2(e)=e\cup e
$$
Our aim is to find a $G$-equivariant formal isomorphism between $\wh Z$ and $\kappa_2^{-1}(0)$. Unfortunaley, since 
we don't know if $\kappa_2^{-1}(0)$ carries a $G$-linearized formal deformation of $F$, we can't apply Rim's uniqueness theorem. We do know that $\wh Z \cong \kappa_2^{-1}(0)$, though perhaps not $G$-equivariantly. However, letting $(A, \mathfrak m)$ and $(B, \mathfrak n)$ be the complete local $k$-algebras corresponding to $\kappa_2^{-1}(0)$ and $\wh Z$, respectively, we may apply Proposition \ref{starr2} (up to the second order the map is indeed unique, and hence $G$--equivariant) and find a $G$-equivariant formal isomorphism
$$
\alpha:\,\,\wh Z= h^{-1}(0)\overset\cong\longrightarrow \kappa_2^{-1}(0).
$$
If we can prove that $\alpha$ is induced by  a $G$-equivariant morphism $\wt\alpha: \wh\Ext^1(F,F)\to  \wh\Ext^1(F,F)$ then Proposition
\ref{Q-is-a-cone} follows at once from
from Proposition \ref{BM}. 
So let $(C, \mathfrak M)$ be the completion at $0$ of the polynomial ring
$\underset{n\geq 0}\oplus S^n\Ext^1(F,F)^\vee$.  Both $A$ and $B$ are quotients of $C$ and we denote by 
$$
\sigma_r: C/\mathfrak M^{r+1}\to A/\mathfrak m^{r+1}\,,\qquad \tau_r: C/\mathfrak M^{r+1}\to B/\mathfrak n^{r+1}
$$
the induced quotient maps.
We are interested in diagrams of type
$$
\xymatrix{ C/\mathfrak M^{r+1}\ar[r]^{\psi_r}\ar[d]_{\sigma_r}&C/\mathfrak M^{r+1}\ar[d]^{\tau_r}\\
A/\mathfrak m^{r+1}\ar[r]^{\phi_r}&B/\mathfrak n^{r+1}
}
$$
where $\phi_r$ is the  $G$-equivariant homomorphism induced by $\alpha$. 
We recall the notation introduced 
after Definition \ref{notation-starr} and we set
$$
\wt\Aut_{r}(C)=\{\psi_r \in\Aut_{r}(C)\,\,|\,\,\psi_r\,\,\text{lifts a $G$-equivariant }\,\, \phi_r\in \Iso_r(A,B)\}
$$
$$
\wt\autr(C)=\{\psi_r \in\autr(C)\,\,|\,\,\psi_r\,\,\text{lifts a $G$-equivariant }\,\, \phi_r\in \isob_r(A,B)\}
$$
By hypothesis $\wt\autone(C)$ is not empty and contains a $G$-fixed point.
The task is to show that the projection map 
$$
\wt\autr(C)\longrightarrow \wt\autb_{r-1}(C)
$$
is surjective on $G$-fixed points.
We then follow, step by step, the proof of Proposition \ref{starr2} and prove
 that if there  is a formal  $G$-equivariant isomorphism $\phi$ between $(A, \mathfrak m)$  and $(B, \mathfrak n)$ inducing the identity  $\mathfrak{m}/\mathfrak{m}^2=\mathfrak{n}/\mathfrak{n}^2$, then there is a $G$-equivariant formal automorphism $\psi$ of  $(C, \mathfrak M)$  lifting $\phi$.
\end{proof}

\section{Generalities on quiver varieties}\label{quivers}

In this section we recall a few basic facts regarding geometric invariant theory (GIT) \cite{MumfordGIT}, \cite{Dolgachev-GIT}, \cite{Dolgachev-Hu}, mostly to set up the notation, and about quiver varieties. For the latter, we loosley follow the exposition of \cite{Ginzburg}, and then present the results from \cite{Nakajima-branching}, \cite{Crawley-Boevey}, and \cite{Crawley-Boevey-norm} that will be needed in the following. For more details on the subject the reader may also consult \cite{Nakajima-Kac-Moody}. 

Let $G$ be a reductive group and let $A$ be a finitely generated $\CC$-algebra. Suppose that the affine variety $X=\spec A$ is acted on by  $G$, and consider the GIT quotient
$$
X\sslash G=\spec A^G
$$
This is a good categorical quotient.  Given a rational character $\chi:  G\to \CC^\times$, let
$$
A_n=\{f\in A\,\,|\,\,(g\cdot f)(x)=\chi(g)^nf(x)\}
$$
be the vector space of $\chi^n$--invariant functions and
and set
$$
X\sslash_\chi G=\operatorname{Proj}\left(\underset{n\geq 0}\oplus A_n\right)
$$
\begin{defin}\label{definition stable affine} A point $x\in X$ is said to be $\chi$-semistable if there exists an $f\in A_n$ such that $f(x)\neq0$, it is called stable if, in addition, the action of $G$ on $X_f$ is closed and the stabilizer of $x$ is finite. We denote by $X(\chi)$ resp. $X(\chi)^s$ the locus of  $\chi$-semistable, resp. $\chi$-stable
points in $X$. Two $\chi$-semistable points are said to be $S_\chi$-equivalent if and only if the closure of their orbits meet in $X(\chi)$. In each $S_\chi$-equivalence class there exists a unique closed orbit (which is of minimal dimension and has reductive stabilizer).
\end{defin}
By construction, he natural morphism
\be\label{GIT-quot}
X(\chi)\longrightarrow X\sslash_\chi G
\ee
establishes a one-to-one correspondence between 
the set of closed points of the quotient $X\sslash_\chi G$ and the set of
$S_\chi$-equivalence classes in $X(\chi)$ (equivalently, with the set of closed orbits).
The morphism 
\be\label{GIT-desing}
X\sslash_\chi G\longrightarrow X\sslash G=X\sslash_{\chi=0} G
\ee
is projective and, often, a resolution of singularities.

Let us turn our attention to quivers.

\begin{defin} A quiver $Q$ is an oriented graph. We denote by $I$ the vertex set and by $E$ the edge and we write $Q=(I, E)$. Given an oriented edge $e\in E$, one lets $s(e)$ and $t(e)$ denote the ``source''
and the ``target'' of $e$, respectively. If $e$ is a loop then $t(e)=h(e)$.
\end{defin}

Associated to a quiver $Q$  is the so called {\it Cartan matrix}. Set $|I|=s$, then the Cartan matrix is the $s\times s$ integral matrix
\be\label{cartan}
\mathbf C=(c_{ij})
\ee
defined by
\[
c_{ij}=\left\{ \begin{array}{cc}2-2\,\sharp\,(\text{edges joining}\,\, i \,\,\text{to itself}) & \text{ if } i=j \\
-\sharp\,(\text{edges joining}\,\, i \,\,\text{to}\,\,j) & \text{ if } i\neq j\\
\end{array} \right.
\]

We also set
$$
\mathbf D=-\mathbf C\,,\qquad d_{ij}=-c_{ij}
$$
We let $d: \ZZ^s \times \ZZ^s \to \ZZ$ be the quadratic form associated to $(d_{ij})$ and we set:
\be 
d( \mathbf n)= {}^t\mathbf  n D  \ \mathbf n.
\ee

Fix a {\it dimension vector}
$$
\mathbf n=(n_1,\dots, n_s)\in \ZZ_{\ge 0}^s
$$
and vector spaces $V_i$, $i=1,\dots,s$, with
$$
\dim V_i=n_i,
$$
and define the vector space of  {\it $\mathbf n$-dimensional representations of} $Q$
by setting
$$
\rep(Q, \mathbf n):=\underset{e\in E}\bigoplus \Hom (V_{s(e)}, V_{t(e)})
$$
The group
$$
G(\mathbf n):=\prod_{i=1}^s\GL(n_i)
$$
acts on $\rep(Q, \mathbf n)$ in a natural way via conjugation.
We denote by
$Q^{op}=(I, E^{op})$
the quiver with the same underlying graph as $Q$, where the orientation of every edge has been reversed. The trace  pairing gives an isomorphism
$$
\rep(Q^{op}, \mathbf n) \cong \rep(Q, \mathbf n)^\vee,
$$
Finally, one  defines a new quiver $\overline Q$ having the same set of vertices as $Q$ and having
$$
\overline E:= E \sqcup E^{op}
$$
as edge set. We then get an identification
$$
\rep(\ov Q, \mathbf n)=\rep(Q, \mathbf n)\oplus \rep( Q, \mathbf n)^\vee
$$
so that we get a natural symplectic form on $\rep(\ov Q, \mathbf n)$. Since the action of $G(\mathbf n)$ on $\rep(\ov Q, \mathbf n)$ respects the symplectic form there is a {\it moment map} which is given by
\be \label{moment map}
\aligned
\mu_{\mathbf n}: \rep(\ov Q, \mathbf n)& \longrightarrow \gl(\mathbf n)\cong \gl(\mathbf n)^\vee\\
(x,y^\vee) & \longmapsto\mu(x,y^\vee)=\sum_{e\in E}[x_e, y_e^\vee]
\endaligned
\ee
Here $\gl(\mathbf n)$ and $\gl(\mathbf n)^\vee$ are identified via the Killing form and $\mu_{\mathbf n}$ is $G(\mathbf n)$--equivariant with respect to the coadjoint action on $ \gl(\mathbf n)^\vee$. Since the center $\mathbb C^*$ of $G(\mathbf n)$ acts trivially, the moment map has values in the hyperplane $(Lie \, \CC^\times)^\perp \subset  \gl(\mathbf n)^\vee $.

Recall that rational characters 
\[
\chi: G(\mathbf n)\to \CC^\times
\]
are in a one-to-one correspondence with vectors
\[
\theta=(\theta_1,\dots,\theta_s)\in \ZZ^s
\]
via the formula 
$$
\chi_{\theta}(g)=\prod\det(g_i)^{\theta_i}
$$
where $g=(g_1,\dots,g_s)\in G(\mathbf n)$. To simplify notation we will freely substitute the symbol $\chi_\theta$ with $\theta$.

In general, the space $\rep(Q, \mathbf n)\sslash_{\chi_{\theta}} G(\mathbf n)$ is very singular, and
the philosophy behind the moment map is that
the quotient 
$$
\mathfrak M_{\theta}(\mathbf n):=\mu_{\mathbf n}^{-1}(0)\sslash_{\chi_{\theta}} G(\mathbf n)
$$
is the natural substitute for the non-existing tangent bundle to $\rep(Q, \mathbf n)\sslash_{\chi_{\theta}} G(\mathbf n)$. 
This assertion is justified by the following process, called Marsden--Weinstein or symplectic reduction (\cite{Marsden-Weinstein}, \cite{Ginzburg}). Let
$$
\pi: \mu_{\mathbf n}^{-1}(0)(\chi_{\theta}) \longrightarrow \mathfrak M_{\theta}(\mathbf n)
$$
be the quotient morphism.
At a smooth point $x \in \mu^{-1}(0)$, the tangent space to the  orbit $G(\mathbf n) \cdot x$ is the orthogonal complement (with respect to natural symplectic structure on $\rep(\ov Q, \mathbf n)$) to the tangent space at $x$ to $\mu^{-1}(0)$. Hence, the normal space $N_x$ carries a natural sympletic structure. If in addition $x$ is ${\chi_{\theta}}$-stable, then its $S_{\chi_{\theta}}$-equivalence class coincides with its orbit,  the point $\pi(x)$ is a smooth point of $\mathfrak M_{\theta}(\mathbf n)$ and $N_x$ can therefore be identified with the tangent space to $\mathfrak M_{\theta}(\mathbf n)$ at  $\pi(x)$.
We denote by $\mathfrak M^s_{\theta}(\mathbf n)$ the locus parametrizing orbits of stable points in $\mathfrak M_{\theta}(\mathbf n)$, so that
$$
\mathfrak M^s_{\theta}(\mathbf n)\subset\mathfrak M_{\theta}(\mathbf n)_{\text{smooth}}
$$
has a natural holomorphic symplectic form defined by Marsden-Weinstein reduction.
We have
$$
\dim \mathfrak M_{\theta}(\mathbf n)=d(\mathbf n)+2.
$$
Following Crawley-Boevey \cite{Crawley-Boevey} define $p(\mathbf n)$ by
\[
d(\mathbf n)+2=2p(\mathbf n)
\]
so that
\[
\dim \mathfrak M_{\theta}(\mathbf n)=2p(\mathbf n)
\]

The notion of $\chi_{\theta}$-semistability can also be described in terms of a slope function, which was first introduced by King in \cite{King}. 

Given a quiver $Q$, consider in $\ZZ^s$ the orthogonal complement of the dimension vector $\mathbf n$:
$$
\mathbf n^\perp\subset \ZZ^s,
$$
and consider $\theta\in \mathbf n^\perp\otimes \QQ$.
Let $V= \oplus V_i$ be an $ \mathbf n$-dimensional representation of $Q$ (or of $\ov Q$). For any sub-representation 
$$
W=\oplus_{i\in I} W_i,\,\qquad W_i\subset V_i
$$
we define the $\theta$-slope of $W$ by setting 
by setting 
$$
{\slope}_\theta(W)=\ff{\theta\cdot\dim W}{\sum \dim W_i }=\ff{\sum_{i=1}^s\theta_i\dim W_i}{\sum \dim W_i }
$$
so that, in particular, $\slope_\theta(V)=0$. Accordingly, a non-zero representation $V$ is said to be {\it $\theta$-semistable} if, for every sub-representation $W$ of $V$, we have $\slope_\theta(W)\leq 0$ and is said to be {\it $\theta$-stable}, if  the strict equality holds for every non-zero, proper sub-representation.

\begin{rem} \label{theta zero}
Consider $\theta=\mathbf 0=(0, \cdots, 0)$. Then
\[
{\slope}_{\mathbf 0}(W)=0
\]
for any $W=\oplus W_i$, so that any representation $V$ is $\mathbf 0$-semistable. Moreover, the $\mathbf 0$-stable representations are precisely the \emph{simple} ones, i.e., those that have no non trivial sub-representations. Lastly, a simple representation is stable with respect to any $\theta$.
\end{rem}

As usual in this context, given a $\theta$-semistable one can consider a Jordan-H\"older filtration and then the associated graded $\gr_\theta(V)$. For example, if $\theta=\mathbf 0$, then the Jordan-H\"older filtration is just a composition series for $V$, and $\gr_{\mathbf 0}(V)$ is a direct sum of simple representations, the so-called ``semi-simplification'' of $V$.
Two $\mathbf n$-dimensional $\theta$-semistable representations $V$ and $V'$ are called $S_\theta$-equivalent, if
\[
\gr_\theta(V)=\gr_\theta(V').
\]

\begin{theorem}[King \cite{King}] Let $ Q$ be a quiver, and $V$ a representation of $Q$, or of $\ov Q$, with dimension vector $\mathbf n$. Let $\theta\in \mathbf n^\perp$. Then 
\begin{itemize}
\item[{\rm i)}] A representation $V$ is $\theta$-semistable (resp. $\theta$-stable) if and only if the point $[V] \in \Rep(Q, \mathbf n)$ (or in $\Rep(\ov Q, \mathbf n)$) is $\chi_\theta$-semistable (resp. $\chi_\theta$-stable);\\
\item[{\rm ii)}] Two $\mathbf n$-dimensional $\theta$-semistable representations $V$ and $V'$ are $S_\theta$-equivalent if and only if the corresponding points in $\Rep(Q, \mathbf n)$ (or in $\Rep(\ov Q, \mathbf n)$) are $S_{\chi_\theta}$-equivalent.
\end{itemize}
\end{theorem}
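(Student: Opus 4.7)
The plan is to apply the Hilbert--Mumford numerical criterion, which will translate the GIT definition of $\chi_\theta$-(semi)stability into a slope condition on sub-representations, following the original argument of King \cite{King}.

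First I would identify one-parameter subgroups of $G(\mathbf n)=\prod_i\GL(V_i)$. Up to conjugation, any 1-PS $\lambda:\mathbb{G}_m\to G(\mathbf n)$ corresponds to a collection of $\mathbb Z$-gradings $V_i=\bigoplus_{k}V_i^{k}$ with $\lambda(t)$ acting as $t^k$ on $V_i^k$, or equivalently to decreasing filtrations $W^{(k)}_i:=\bigoplus_{\ell\geq k}V_i^{\ell}$. For a representation $V$ of $\ov Q$, the limit $\lim_{t\to0}\lambda(t)\cdot V$ exists in $\Rep(\ov Q,\mathbf n)$ if and only if every structure map carries $W^{(k)}_{s(e)}$ into $W^{(k)}_{t(e)}$ for all $k$, i.e., iff $W^{(\bullet)}$ is a filtration by sub-representations; the limit itself is then the associated graded.

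Next I would compute the Mumford weight. A direct calculation yields $\chi_\theta(\lambda(t))=t^{N}$ with $N=\sum_i\theta_i\sum_{k}k\dim V_i^{k}$, and summation by parts (using $\theta\cdot\mathbf n=0$ to discard the constant tail) rewrites this as
\[
\langle\chi_\theta,\lambda\rangle=\sum_{k}\theta\cdot\dim W^{(k)}.
\]
By the Hilbert--Mumford criterion applied to the linearization defined by $\chi_\theta$, $[V]$ is $\chi_\theta$-semistable iff $\langle\chi_\theta,\lambda\rangle$ has the correct sign for every $\lambda$ whose limit exists. Specializing to a two-step filtration $V\supset W\supset0$ shows that $\chi_\theta$-semistability forces the slope inequality on every sub-representation $W$; conversely, the displayed formula expresses the general Mumford weight as a sum of such terms, so the slope condition on all sub-representations guarantees the required sign of $\langle\chi_\theta,\lambda\rangle$ for every valid $\lambda$. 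Statement (i) for semistability follows; the stable case is handled by the same dictionary, strict inequalities corresponding to closed orbits with finite stabilizer.

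For (ii), recall that in each $S_{\chi_\theta}$-equivalence class on the semistable locus there is a unique closed orbit, which is the one parametrizing the corresponding point of the GIT quotient. On the representation-theoretic side, choosing a Jordan--H\"older filtration of $V$ and the corresponding 1-PS $\lambda$, every successive quotient has slope zero, so $\langle\chi_\theta,\lambda\rangle=0$ and $\lim_{t\to0}\lambda(t)\cdot V=\gr_\theta(V)$ lies in the orbit closure of $[V]$ inside the semistable locus. Since $\gr_\theta(V)$ is a direct sum of $\theta$-stables, it is $\theta$-polystable and its $G(\mathbf n)$-orbit is closed in the semistable locus. Therefore $[\gr_\theta(V)]$ represents the unique closed orbit in the closure of $G(\mathbf n)\cdot[V]$, and $V\sim_\theta V'$ iff $\gr_\theta(V)\cong\gr_\theta(V')$ iff $[V]\sim_{\chi_\theta}[V']$. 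The main technical point is the combinatorial identity rewriting the Mumford weight as a sum of terms $\theta\cdot\dim W^{(k)}$, together with the observation that a single sub-representation already detects failure of the sign condition; once this is in place, the rest of the argument is formal.
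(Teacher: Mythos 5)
The paper does not actually prove this statement---it is quoted directly from King's paper \cite{King} with only the citation---and your argument is precisely King's original Hilbert--Mumford computation (identifying the one-parameter subgroups whose limits exist with filtrations by sub-representations, and rewriting the Mumford weight by summation by parts as a sum of terms $\theta\cdot\dim W^{(k)}$), so the proposal is correct and coincides with the intended proof. The only point to watch is the sign convention for the slope, which as the paper itself notes in the remark following the theorem differs from King's by a sign; your phrase ``the correct sign'' glosses over this harmlessly.
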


\begin{rem} \emph{
For the definition of slope, we follow \cite{Nakajima-branching} and \cite{Ginzburg}, even though it differs from the one considered in \cite{King} by a sign. However, taking the dual of a representation preserves stability so that $  \mathfrak M_{\theta}(\mathbf n)$ is canonically isomorphic to $ \mathfrak M_{-\theta}(\mathbf n)$. Hence, from our point of view, this change of sign is irrelevant.}
\end{rem}

\begin{rem}\label{center acts trivially}\emph{
The reason to consider $\theta \in \mathbf n^\perp$ is that in this way the character is trivial when restricted to the center $\CC^ \times \subset G(\mathbf n)$, which acts trivially on the $\mathbf n$--dimensional quiver representations of $\ov Q$ (cf. the remark after Proposition \ref{Hilb Mum} and the {\it Warning} on page 517 of \cite{King}). From the point of view of $\mu_\theta$ stability, it is not strictly necessary to assume that $\sum \theta_i n_i=0$. However, as observed by Rudakov (\cite{Rudakov} Proposition 3.4, cf. also Remark 2.3.3 of \cite{Ginzburg}), it is always possible to reduce to this case since stability with respect to a given $\theta$ is equivalent to stability with respect to $\theta-c(1, \dots, 1)$, for any constant $c$. 
Another way of solving this issue would be to consider instead the action of the group $G(\mathbf n) \cap SL(\oplus V_i)$. Since the first seems to be the convention adopted widely in this context, we stick to it. From now on we will assume that $\theta \cdot \mathbf n=0$.
}
\end{rem}

Given an element $\alpha=(\alpha_1,\dots,\alpha_s) \in \ZZ^s$ the {\it support } of $\alpha$, denoted by
$\supp(\alpha)$,  is the subgraph of $Q$
consisting of those vertices $i$ for which $\alpha_i\neq0$  and all the edges joining these vertices.

Kac  has generalized the concept of positive roots to arbitrary quivers (not only of Dynkin type):
$$
R_+:=\{\alpha \in \ZZ^s_+\,\,|\,\, d(\alpha) \ge 2 \,\,\, \text{and } \,\supp(\alpha)\,\,\,\text{is connected}\}.
$$
and has shown in \cite{Kac} and \cite{Kac-root} that there exist an indecomposable representation of a given dimension vector precisely if the dimension vector is a positive root. We can now state the first  of the two theorems by Crawley-Boevey that we will need.

\begin{theorem}{\rm(Crawley-Boevey \cite{Crawley-Boevey}, Theorem 1.2)} \label{CBT1}Let $Q$ be a quiver with $s$ vertices and let $\mathbf n\in \ZZ^s_+$ be a dimension vector. Then there exists a simple representation in $\mu_{\mathbf n}^{-1}(0)$ if and only if $\mathbf n$ is a positive root and, for any decomposition
$$
\mathbf n=\beta^{(1)}+\cdots \beta^{(r)}\,,\quad r\geq 2\,,\quad \beta^{(i)} \in R_+ ,\,\,\ \text{ for }\,\, i=1,\dots r
$$
the inequality
\be \label{inequality dimensions}
p(\mathbf n)>\sum_{i=1}^r p(\beta^{(i)})
\ee
holds. In this case $\mu_{\mathbf n}^{-1}(0)$ is a  reduced and  irreducible complete intersection of dimension 
$$
2p(\mathbf n)+{}^t\mathbf n\cdot\mathbf n -1=d(\mathbf n)+2+{}^t\mathbf n\cdot\mathbf n -1.
$$
\end{theorem}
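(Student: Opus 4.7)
My plan is to work via the \emph{preprojective algebra} $\Pi(Q)$, the quotient of the path algebra of $\overline Q$ by the two-sided ideal generated by $\sum_{e\in E}[x_e,y_e]$. Under this dictionary, $\mathbf n$-dimensional representations of $\Pi(Q)$ correspond to points of $\mu_{\mathbf n}^{-1}(0)$, and the simple $\Pi(Q)$-modules are exactly the simple points we want to characterize. I would treat the two directions of the equivalence separately, and deduce the complete intersection assertion as a consequence.

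For the \emph{only if} direction, a simple $V \in \mu_{\mathbf n}^{-1}(0)$ is in particular indecomposable as a representation of the double quiver $\overline Q$, so by Kac's theorem $\mathbf n$ must be a positive root. For the inequality, I would stratify $\mu_{\mathbf n}^{-1}(0)$ by the polystable type of the corresponding $\Pi(Q)$-module: the stratum indexed by a decomposition $\mathbf n = \beta^{(1)}+\cdots+\beta^{(r)}$ parametrizes semisimple $\Pi(Q)$-modules $\bigoplus V^{(i)}$ with each $V^{(i)}$ simple of dimension $\beta^{(i)}$, together with their self-extensions. Using the Crawley--Boevey formula
\[
\dim \Ext^1_\Pi(V,W) = \dim \Hom_\Pi(V,W) + \dim \Hom_\Pi(W,V) - (\dim V, \dim W),
\]
where $(-,-)$ is the symmetrised Euler form, so that $p(\gamma)=1-\tfrac{1}{2}(\gamma,\gamma)$, a direct count shows that this stratum has dimension strictly less than the generic dimension $2p(\mathbf n) + {}^t\mathbf n\cdot\mathbf n - 1$ of $\mu_{\mathbf n}^{-1}(0)$ precisely when $p(\mathbf n) > \sum_i p(\beta^{(i)})$. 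Since by hypothesis the simple locus is non-empty and Zariski open, this inequality must hold for every non-trivial decomposition.

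For the \emph{if} direction, I would induct on $\sum n_i$ via reflection functors (in the spirit of Bernstein--Gelfand--Ponomarev, adapted to $\Pi(Q)$) at vertices carrying no loop. Such a functor $s_i$ induces a birational identification $\mu_{\mathbf n}^{-1}(0) \dashrightarrow \mu_{s_i(\mathbf n)}^{-1}(0)$ preserving simples and transporting the defining inequality for $\mathbf n$ to that for $s_i(\mathbf n)$, reducing the existence problem to dimension vectors in the fundamental region of the Weyl group action. For real roots in the fundamental region, $\mathbf n$ is Weyl-conjugate to a simple root $e_i$ whose associated $1$-dimensional $\Pi(Q)$-module $S_i$ gives the required simple. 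For imaginary roots, the inequality $p(\mathbf n) > \sum p(\beta^{(i)})$ forces every decomposition stratum of $\mu_{\mathbf n}^{-1}(0)$ to have strictly smaller dimension than the total space, so a generic point admits no non-trivial proper sub-representation and is simple.

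Finally, once $\mu_{\mathbf n}^{-1}(0)$ contains a simple point, the differential of the moment map $\mu_{\mathbf n}$ has maximal rank $\mathbf n \cdot \mathbf n - 1$ there (the deficit of $1$ because the image lies in $\ker \tr$, since the centre $\CC^\times \subset G(\mathbf n)$ acts trivially). Thus $\mu_{\mathbf n}^{-1}(0)$ is smooth of the expected codimension at any simple point, yielding the asserted dimension $d(\mathbf n) + 2 + {}^t\mathbf n \cdot \mathbf n - 1$ and the complete intersection property on a Zariski dense open set; reducedness then follows from Cohen--Macaulayness of complete intersections together with smoothness on a dense open, and irreducibility from connectedness of the smooth simple locus. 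The main obstacle in this plan is the imaginary-root case of the \emph{if} direction: constructing a simple there requires ruling out every possible proper sub-representation of a generic $\Pi(Q)$-module simultaneously, and this is where Crawley--Boevey's canonical decomposition for preprojective algebras and his finer $\Ext$-estimates become essential. By contrast, the reflection-functor reduction and the complete-intersection consequence are relatively formal once this core input is in place.
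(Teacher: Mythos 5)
This statement is not proved in the paper at all: it is imported verbatim as Theorem 1.2 of \cite{Crawley-Boevey} and used as a black box, so there is no internal argument to compare yours against. Your sketch does assemble the right ingredients of Crawley--Boevey's actual proof (preprojective algebra modules, Kac's theorem for the positive-root condition, the $\Ext^1$ formula, stratification by semisimple type, reduction to the fundamental region), and you correctly flag where the hard core lies. But as written the sketch has genuine gaps, not just omitted details.

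The main one is circularity in the dimension arguments. In the ``only if'' direction you derive the inequality $p(\mathbf n)>\sum p(\beta^{(i)})$ by comparing each decomposition stratum with ``the generic dimension $2p(\mathbf n)+{}^t\mathbf n\cdot\mathbf n-1$ of $\mu_{\mathbf n}^{-1}(0)$'' --- but that this is the dimension (and that $\mu_{\mathbf n}^{-1}(0)$ is irreducible) is itself a conclusion of the theorem; a priori one only has the lower bound $\dim\ge 2p(\mathbf n)+{}^t\mathbf n\cdot\mathbf n-1$ on every component, coming from the fact that $\mu_{\mathbf n}^{-1}(0)$ is cut out by ${}^t\mathbf n\cdot\mathbf n-1$ equations. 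Worse, the stratum attached to a decomposition into arbitrary positive roots $\beta^{(i)}$ may be empty, since a positive root need not carry a simple $\Pi(Q)$-module; so its dimension count says nothing about that decomposition. Crawley--Boevey needs a separate combinatorial lemma (any decomposition into positive roots refines to one into elements of $\Sigma_0$ without decreasing $\sum p(\beta^{(i)})$) together with a simultaneous induction to close this loop. The same circularity affects your fundamental-region existence argument (``strictly smaller dimension than the total space'' --- smaller than what, before the dimension and irreducibility are known?) and the closing reducedness/irreducibility step, which requires showing that \emph{every} irreducible component has the expected dimension and meets the simple locus; none of this is ``relatively formal.'' Finally, the reflection-functor reduction does not apply here: the Crawley--Boevey--Holland reflection functor at a loopfree vertex $i$ is an equivalence $\Pi^{\lambda}\!\operatorname{-mod}\simeq\Pi^{r_i\lambda}\!\operatorname{-mod}$ only when $\lambda_i\neq 0$, which fails identically at $\lambda=0$ (it would have to send the simple at $i$ to a module of dimension $-e_i$). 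For $\lambda=0$ the reduction to the fundamental region is instead purely combinatorial: if $\mathbf n$ is not a coordinate vector and $(\mathbf n,e_i)>0$ at some loopfree vertex $i$, then $\mathbf n=s_i(\mathbf n)+(\mathbf n,e_i)\,e_i$ is a decomposition into positive roots with $p(\mathbf n)=p(s_i\mathbf n)+\sum p(e_i)$, so the defining strict inequality already excludes $\mathbf n$ from $\Sigma_0$.
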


Let $V$ be a semisimple (i.e., $0$-semistable) representation, and consider its simple components
$$
\gr_{\mathbf 0}(V)=V_1^{k_1}\oplus\cdots\oplus V_r^{k_r}\,,\quad \dim V_i=\beta^{(i)}\,,\quad i=1,\dots,r.
$$
One then says that $V$ has type $\tau=(k_1, \beta^{(1)};\dots; k_r, \beta^{(r)} )$. Notice that the representation types $\tau$ are in one-to-one correspondence with decompositions
\[
\mathbf n=k_1\beta^{(1)}+\cdots +k_r \beta^{(r)},
\]
with $k_i >0$ and $\beta^{(i)}$ a positive root.
The second theorem of Crawley-Boevey is the following.

\begin{theorem}{\rm(Crawley-Boevey \cite{Crawley-Boevey}, Theorem 1.3)}\label{CBT2} Let $Q$ be a quiver with $s$ vertices. Let $\mathbf n\in \ZZ^s_+$ be a dimension vector. Suppose $\mathbf n=k_1 \beta^{(1)}+\cdots+k_r\beta^{(r)}$.
Then the set  $\Sigma_\tau$ of semisimple representations of type $\tau=(k_1, \beta^{(1)};\dots; k_r, \beta^{(r)} )$
is a locally closed  subset of  $\mathfrak M_{0}(\mathbf n)=\mu_{\mathbf n}^{-1}(0)\sslash G(\mathbf n)$ of dimension $2\sum_{i=1}^r p(\beta^{(i)})$.
\end{theorem}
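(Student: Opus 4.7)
The plan is to identify $\Sigma_\tau$ as the image of a natural morphism from a product of simple loci of smaller quiver varieties, and to read off its dimension from Theorem \ref{CBT1}.

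First, by the discussion of $S_{\mathbf 0}$-equivalence following Definition \ref{definition stable affine} together with Remark \ref{theta zero}, each point of $\mathfrak M_{\mathbf 0}(\mathbf n)$ corresponds to a unique isomorphism class of semisimple representation in $\mu_{\mathbf n}^{-1}(0)$. Hence a point of $\Sigma_\tau$ corresponds uniquely to a class $V_1^{k_1}\oplus \cdots \oplus V_r^{k_r}$ with the $V_i$'s pairwise non-isomorphic simple representations of dimension vector $\beta^{(i)}$ in $\mu_{\beta^{(i)}}^{-1}(0)$. In particular, if $\Sigma_\tau\neq \emptyset$ then Theorem \ref{CBT1} applies to each $\beta^{(i)}$: each $\beta^{(i)}$ is a positive root satisfying the strict inequality (\ref{inequality dimensions}), and the simple locus $\mathfrak M^s_{\mathbf 0}(\beta^{(i)})$ is a non-empty open subset of the irreducible variety $\mathfrak M_{\mathbf 0}(\beta^{(i)})$ of dimension $2p(\beta^{(i)})$.

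Next I would introduce the morphism
\[
\Phi\colon \prod_{i=1}^r \mathfrak M^s_{\mathbf 0}(\beta^{(i)}) \longrightarrow \mathfrak M_{\mathbf 0}(\mathbf n),\qquad ([V_1],\dots,[V_r])\longmapsto \bigl[V_1^{k_1}\oplus\cdots\oplus V_r^{k_r}\bigr],
\]
which is well-defined because the direct sum of elements of $\mu_{\beta^{(i)}}^{-1}(0)$ lies in $\mu_{\mathbf n}^{-1}(0)$ with closed $G(\mathbf n)$-orbit. By the previous paragraph the image of $\Phi$ is exactly $\Sigma_\tau$. Let $U$ be the open subset of the product on which the representatives $V_i$ are pairwise non-isomorphic; $U$ is non-empty exactly when $\Sigma_\tau\neq\emptyset$. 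On $U$ the fibres of $\Phi$ are orbits of the finite group of permutations of indices with identical data $(k_i,\beta^{(i)})$, so $\Phi|_U$ is finite onto its image $\Sigma_\tau$. Therefore
\[
\dim \Sigma_\tau = \dim \prod_{i=1}^r \mathfrak M^s_{\mathbf 0}(\beta^{(i)}) = 2\sum_{i=1}^r p(\beta^{(i)}).
\]

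Finally, local closedness of $\Sigma_\tau$ in $\mathfrak M_{\mathbf 0}(\mathbf n)$ is a standard fact about stratifications of GIT quotients by representation type: the combinatorial type is a constructible invariant of a semisimple representation, and its level sets are therefore locally closed. The main obstacle I anticipate is the generic finiteness of $\Phi|_U$; this reduces to the positive-dimensionality of each $\mathfrak M^s_{\mathbf 0}(\beta^{(i)})$ when some $\beta^{(i)}$ coincide, which is precisely what is delivered by Theorem \ref{CBT1} together with the hypothesis $\Sigma_\tau\neq\emptyset$.
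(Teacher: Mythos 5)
The paper offers no proof of this statement to compare against: it is quoted directly from Crawley--Boevey (Theorem~1.3 of \cite{Crawley-Boevey}). Judged on its own, your argument is essentially the standard one (and, in substance, Crawley--Boevey's): identify points of $\mathfrak M_0(\mathbf n)$ with isomorphism classes of semisimple representations, surject the product of simple loci onto $\Sigma_\tau$ by the direct-sum map, note that over the open set $U$ of pairwise non-isomorphic tuples the fibres are orbits of a finite permutation group by Krull--Schmidt, and read the dimension off Theorem~\ref{CBT1}: each $\beta^{(i)}$ carries a simple representation, so $\mu_{\beta^{(i)}}^{-1}(0)$ is irreducible of dimension $2p(\beta^{(i)})+{}^t\beta^{(i)}\cdot\beta^{(i)}-1$, and quotienting the open simple locus by orbits of dimension ${}^t\beta^{(i)}\cdot\beta^{(i)}-1$ gives $\dim\mathfrak M^s_{0}(\beta^{(i)})=2p(\beta^{(i)})$. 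Since $U$ is a non-empty open subset of an irreducible product, the dimension count is correct.

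The one genuinely weak step is local closedness. As written, your justification is a non sequitur: the level sets of a constructible invariant are constructible, not locally closed (a constructible set need not be open in its closure). What is actually needed is the stratification argument: there are finitely many representation types, the closure of $\Sigma_\tau$ is a union of strata $\Sigma_{\tau'}$ with $\tau'$ a degeneration of $\tau$ (obtained by merging coincident simple summands or further decomposing the $\beta^{(i)}$), and degeneration is a partial order, so that $\Sigma_\tau=\overline{\Sigma_\tau}\setminus\bigcup_{\tau'}\overline{\Sigma_{\tau'}}$ is open in its closure. The fact that $\overline{\Sigma_\tau}$ is a union of more degenerate strata itself requires an argument --- for instance, that the image of the direct-sum map on the \emph{full} products $\prod_i\mathfrak M_0(\beta^{(i)})$ is closed, or an upper-semicontinuity argument for $\dim\End$ along the closed-orbit locus. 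With that supplied, your proof is complete.
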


Set
$$
\W_\mathbf n=\mathbf n^\perp\otimes\QQ\subset\QQ^s
$$
As we already observed,  points of  $\W_\mathbf n$ may be thought of as stability parameters for quiver representations. 
Nakajima \cite{Nakajima-branching}, introduced a wall and chamber structure in $\W_\mathbf n$, which we now describe. We set
$$
R_+(\mathbf n)=\{\alpha\in\ZZ^s\,\,|\,\,  \alpha\,\,\text{a positive root,}\,\,\,\alpha_i\leq n_i\}\setminus\{0\,,\mathbf n\}
$$
(here, to avoid  ``redundant'' walls we slightly depart from Nakajima's definition by adding the condition on the connected support). By virtue of Theorem \ref{CBT1}, if $\alpha$ belongs to $R_+(\mathbf n)$ then
$d(\alpha)+2\geq 0$.
For every $\alpha\in R_+(\mathbf n)$ we define {\it the wall associated to $\alpha$} by setting
\be \label{W alpha}
\W_\alpha=\{\theta\in \W_\mathbf n\,\,|\,\,\theta\cdot\alpha=0\}
\ee
The idea is that there exist a strictly $\theta$-semistable $V$ with an $\alpha$-dimensional sub-representation $V' \subset V$ with $\slope_\alpha(V')=\slope_\alpha(V)$, precisely when $\theta$ lies in $\W_\alpha$. Notice that
\[
\text{if}\,\,\,\, \alpha+\beta=\mathbf n\,,\,\,\,\,\text{then}\,\,\,\,\W_\alpha=\W_\beta
\] 
By definition, the  {\it chambers of  $\W_\mathbf n$} are the connected components of the complement of the walls.
A point of  $\W_\mathbf n$ is said to be {\it $\mathbf n$-generic} if it lies in a chamber. In Nakajima's language, a codimension $i \ge 1$ {\it face} of $\W_\mathbf n$ is a connected component of the complement of the intersection of $(i+1)$ walls in an intersection of $i$ walls. One of Nakajima's result is the following, which is the quiver counterpart of Theorem \ref{v walls} and Proposition \ref{walls and strata}.

\begin{prop}[Nakajima \cite{Nakajima-branching}, Lemma 2.12 ]\label{Nak-lem}

\item[{\rm (1)}] If $\theta$ is in a chamber then $\theta$-semistability implies $\theta$-stability so that
$\mathfrak M^s_{\theta}(\mathbf n)=\mathfrak M_{\theta}(\mathbf n)$.

\item[{\rm (2)}] If two stability parameters $\theta$ and $\theta'$ are contained in the same face, then
$\theta$-semistability (resp. $\theta$-stability) is equivalent to $\theta'$-semistability (resp. $\theta'$-stability).

\item[{\rm (3)}] Let $F$ and $F'$ be faces such that $F'\subset\ov F$. Suppose that $\theta\in F$ and $\theta'\in F'$. Then:

{\rm (i)}  a $\theta$-semistable representation is also $\theta'$-semistable,

{\rm (ii)} a $\theta'$-stable representation is also $\theta$-stable.

\end{prop}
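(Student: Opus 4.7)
The plan is to reduce all three statements to the linear characterization of (semi)stability together with a single minimality lemma. Fix an $\mathbf n$-dimensional representation $V$ of $\ov Q$. Since $\theta\cdot\mathbf n=0$ by convention, $V$ is $\theta$-semistable (resp.\ $\theta$-stable) if and only if $\theta\cdot\dim W\le 0$ (resp.\ $<0$) for every proper non-zero sub-representation $W\subset V$; these are finitely many linear inequalities in $\theta$, since $0\le\dim W\le \mathbf n$ componentwise.

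The key lemma to be established first is the following. For $\sigma\in\W_\mathbf n$ and a constant $c\ge 0$, if $W\subset V$ is a proper non-zero sub-representation minimizing $\sum_i\dim W_i$ among all proper sub-reps with $\sigma\cdot\dim W\ge c$, then $W$ is $\sigma$-stable, hence indecomposable, and its dimension vector $\alpha:=\dim W$ lies in $R_+(\mathbf n)$. Indeed, for any proper $W'\subsetneq W$ minimality forces $\sigma\cdot\dim W'<c\le\sigma\cdot\dim W$, hence $\slope_\sigma(W')<\slope_\sigma(W)$; Kac's theorem then identifies $\alpha$ with a positive root, $\sigma$-stability forces the support to be connected, and $0<\alpha\le\mathbf n$ with $\alpha\ne\mathbf n$ (the latter using $\sigma\cdot\mathbf n=0$, properness of $W$, and $c\ge 0$).

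For part (1), if $\theta$ lies in a chamber and $V$ is strictly $\theta$-semistable, apply the lemma with $\sigma=\theta$ and $c=0$ to any sub-representation $W$ achieving equality $\theta\cdot\dim W=0$: it produces $\alpha\in R_+(\mathbf n)$ with $\theta\in\W_\alpha$, contradicting the chamber hypothesis. For parts (2) and (3)(i), if $V$ is $\theta$-semistable but not $\theta'$-semistable, apply the lemma with $\sigma=\theta'$ and a sufficiently small $c>0$ to a destabilizing sub-representation; this produces $\alpha\in R_+(\mathbf n)$ with $\theta'\cdot\alpha>0$ and, by $\theta$-semistability of $V$, $\theta\cdot\alpha\le 0$. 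To reach a contradiction, examine the face $F$ containing $\theta$. If $\theta\cdot\alpha=0$ then $\theta\in\W_\alpha$ and, since $F$ is an open subset of an intersection of walls, $F\subset\W_\alpha$, so $\ov F\subset\W_\alpha$ and $\theta'\cdot\alpha=0$. If $\theta\cdot\alpha<0$ then $F\subset\{\sigma:\sigma\cdot\alpha<0\}$ by connectedness, so $\ov F\subset\{\sigma\cdot\alpha\le 0\}$ and $\theta'\cdot\alpha\le 0$. Either conclusion contradicts $\theta'\cdot\alpha>0$. The same pattern, with the destabilizing strict inequality $\sigma\cdot\dim W\ge 0$ in place of $>0$, handles the stable version of (2) and part (3)(ii) by symmetry.

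The principal obstacle is the minimality lemma, where one must invoke Kac's classification of dimension vectors of indecomposable quiver representations (together with Crawley-Boevey's refinement for representations in $\mu_{\mathbf n}^{-1}(0)$, already recalled as Theorem \ref{CBT1}) in order to land in $R_+(\mathbf n)$; once this reduction is established, the remainder of the proof is routine face-and-wall combinatorics inside $\W_\mathbf n$.
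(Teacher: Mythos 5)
The paper offers no proof of this proposition --- it is quoted from \cite{Nakajima-branching}, Lemma 2.12 --- so there is nothing internal to compare your argument against. Your architecture is the standard one and, as far as I can tell, essentially Nakajima's: reduce everything to the claim that a minimal destabilizing (or slope-equalizing) subrepresentation has dimension vector in $R_+(\mathbf n)$, and then run sign-of-$\theta\cdot\alpha$ combinatorics on faces. The combinatorial half is sound: $R_+(\mathbf n)$ is finite, a face either lies inside a given wall $\W_\alpha$ or is disjoint from it, so $\sigma\mapsto\sigma\cdot\alpha$ has constant sign on each connected face, and passing to closures yields exactly the implications in (1)--(3), including the stable halves of (2) and (3)(ii) as you indicate.

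There are, however, two problems in the minimality lemma itself. The small one: from $\sigma\cdot\dim W'<c\le\sigma\cdot\dim W$ you cannot conclude $\slope_\sigma(W')<\slope_\sigma(W)$ for an arbitrary constant $c>0$, since the two sides are divided by different total dimensions; the inference is valid only in the two instances you actually use, namely $c=0$ and $c$ below every positive value of $\sigma\cdot\beta$ for $0\le\beta\le\mathbf n$, where ``$\ge c$'' means ``$>0$'' and ``$<c$'' means ``$\le 0$''. State the lemma with those thresholds and the slope comparison is immediate. The substantive one is the passage from ``$W$ is $\sigma$-stable, hence indecomposable'' to ``$\dim W$ is a positive root.'' Kac's theorem applies to representations of a quiver over its path algebra; applied to $W$ as a representation of $\ov Q$ it would place $\dim W$ among the positive roots of $\ov Q$, which is the wrong root system (the walls are indexed by roots of $Q$). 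And Theorem \ref{CBT1}, which you invoke as the refinement for $\mu^{-1}(0)$, characterizes dimension vectors of \emph{simple} representations, whereas your minimal $W$ is stable but in general not simple (a stable representation of the preprojective algebra can have proper nonzero subrepresentations of smaller slope --- this already happens for the doubled $A_2$ quiver in dimension $(1,1)$ with one of the two arrows equal to zero). What the step actually requires is Crawley-Boevey's theorem that an indecomposable representation of the preprojective algebra $\Pi^0(Q)$ --- and a subrepresentation of a point of $\mu_{\mathbf n}^{-1}(0)$ is again such a representation --- has dimension vector a positive root of $Q$. That statement is available in \cite{Crawley-Boevey}, but it is not the one recalled in the paper, so as written the key reduction to $R_+(\mathbf n)$ is unjustified; once the correct theorem is cited, your proof goes through.
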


In particular, since all the faces contain   $\mathbf 0$ in their closure, for any $\theta \in \mathbf n^\perp$, there is a natural projective morphism
\[
\xi: \mathfrak M_{\theta}(\mathbf{ n}) \longrightarrow \mathfrak M_0(\mathbf{ n}),
\]
which is an isomorphism on the locus of simple representations. Recall, also, that $S_{\mathbf 0}$-equivalence classes of representations are in one-to-one correspondence with isomorphism classes of direct sum of simple representations and therefore one can interpret the morphism $\xi$ as the ``semisimplification'' map, which to a representation $V$ assigns the isomorphism class of $\gr_{\mathbf 0}(V)$:
\[
\xi: V \longmapsto \gr_{\mathbf 0}(V).
\]

Finally, observe that if $\ov Q$ and $\mathbf n$ are such that (\ref{inequality dimensions}) holds for any decomposition $\mathbf n=\beta^{(1)}+\cdots +\beta^{(r)}$, then the assumptions of Theorem \ref{CBT1} are satisfied, hence the simple locus is non-empty and $\xi$ is birational. As a consequence, if $\theta$ is $\mathbf n$-generic so that $\mathfrak M^s_{\theta}(\mathbf n)\subset\mathfrak M_{\theta}(\mathbf n)$, then $\xi $ is a symplectic resolution.

\begin{rem}[\cite{Ginzburg}, Remark 2.3.10] \label{dual}
There is a canonical isomorphism $\mathfrak M_{\theta} \cong \mathfrak M_{-\theta}$, given by taking the dual represention.
\end{rem}

\section{Statement of the main theorem}\label{Statement of the main theorem}

Before stating the main theorem we show how to associate a quiver to a polystable $F$ on a K3 surface $S$. The connection with quiver varieties is already present in Kaledin, Lehn and Sorger who pointed out in \cite{Kaledin-Lehn-Sorger06} the strong similarity between singular moduli spaces and Nakajima quiver varieties (\S 2.7 of loc. cit).

\begin{prop} \label{quiver} Let $H_0$ be a polarization on S, let $V_1, \dots, V_s$ be vector spaces of dimension $n_1, \dots, n_s$ and let $F_1, \dots, F_s$ be pairwise distinct $H_0$-stable sheaves such that the sheaf
\[
F=\oplus_{i=1}^s F_i \otimes V_i,
\]
is $H_0$-polystable. Set $\mathbf{n}:=(n_1, \dots, n_s)$ and $G(\mathbf{n})=\prod GL(n_i)$, so that
\[
G(\mathbf{ n}) \cong \Aut(F).
\] 
There exist a quiver $Q=Q(F)$ and 
$G(\mathbf{n})$-equivariant isomorphisms
\[
\Rep(\overline Q, \mathbf{n}) \cong \Ext^1(F,F), \quad  \mathfrak{gl}(\mathbf{n})^\vee \cong \Ext^2(F,F),
\]
such that, via these isomorphisms, the quadratic part (\ref{k2 cup}) of the Kuranishi map for $F$
\[
k_2: \Ext^1(F, F) \to \Ext^2(F,F),
\]
corresponds to the moment map (\ref{moment map}).

\end{prop}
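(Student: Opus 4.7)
The plan is to construct the quiver $Q = Q(F)$ directly from the $\Ext$-algebra of the decomposition $F = \oplus_i F_i \otimes V_i$ and then let Serre duality on the K3 surface produce all the required identifications. Specifically, I will declare the vertex set of $Q$ to be $I = \{1, \dots, s\}$, indexed by the distinct stable summands $F_i$; for each ordered pair $i<j$ I put $\dim_{\CC} \Ext^1(F_i, F_j)$ arrows from $i$ to $j$; and at each vertex $i$ I put $\tfrac12 \dim_{\CC} \Ext^1(F_i, F_i)$ loops. The halving at loops is forced by $K_S \cong \mathcal{O}_S$: graded commutativity of Yoneda together with the trace $\Ext^2(F_i, F_i) \to H^2(\mathcal{O}_S) = \CC$ makes $\Ext^1(F_i, F_i)$ into a symplectic, hence even-dimensional, vector space.

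Starting from the canonical decomposition
\[
\Ext^k(F, F) = \bigoplus_{i,j} \Ext^k(F_i, F_j) \otimes \Hom(V_i, V_j),
\]
I will use three standard inputs: (a) $\Hom(F_i, F_j) = \delta_{ij}\CC$ by stability and $F_i \not\cong F_j$ for $i \neq j$; (b) $\Ext^2(F_i, F_j) \cong \Hom(F_j, F_i)^\vee = \delta_{ij}\CC$ by Serre duality; (c) $\Ext^1(F_j, F_i) \cong \Ext^1(F_i, F_j)^\vee$ again by Serre duality. Inputs (a) and (b) immediately give the $G(\mathbf n)$-equivariant isomorphism $\Ext^2(F,F) \cong \oplus_i \End(V_i) \cong \mathfrak{gl}(\mathbf n)^\vee$, where the second identification is the trace pairing on each summand. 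For $\Ext^1$, I will fix, for each $i<j$, a basis of $\Ext^1(F_i, F_j)$ whose elements are declared to be the arrows $i \to j$ of $Q$; input (c) then identifies $\Ext^1(F_j, F_i)$ with the space of opposite arrows in $\overline Q$ via the Serre-dual basis. For the loops, I pick a Lagrangian splitting $\Ext^1(F_i, F_i) = L_i \oplus L_i^\vee$ under the symplectic Yoneda form; a basis of $L_i$ becomes the loops of $Q$ at $i$, while $L_i^\vee$ matches the reversed loops in $\overline Q$. Summing the resulting $\Hom(V_{s(e)}, V_{t(e)})$-summands over edges of $\overline Q$ reproduces exactly $\Ext^1(F,F)$, and the identification is manifestly $G(\mathbf n)$-equivariant because the vertex-indexed tensor decomposition is canonical.

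The last step is to verify that, under these identifications, the quadratic Kuranishi map $\kappa_2(e) = e \cup e$ coincides with the moment map $\mu(x, y^\vee) = \sum_{e \in E}[x_e, y_e^\vee]$. Both maps are $G(\mathbf n)$-equivariant and quadratic, so it suffices to compare them block-by-block. The component of $e \cup e$ landing in $\Ext^2(F_i, F_i) \otimes \End(V_i)$ equals $\sum_j e_{ij} \cup e_{ji}$; on the $\Ext^1$-factors this is the Serre pairing, which by our choice of dual bases is simply the evaluation pairing, so the whole expression reduces to the composition $\Hom(V_i, V_j) \otimes \Hom(V_j, V_i) \to \End(V_i)$. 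The analogous block in $\End(V_j)$ comes with the opposite composition order, and the two assemble, edge by edge, into the commutator $[x_e, y_e^\vee]$ appearing in $\mu$. I expect the main obstacle to be the sign and normalization bookkeeping: the Koszul signs in the DGLA $R\Hom(F,F)$, the Serre duality convention, the choice of Lagrangian splitting, and the identification $\mathfrak{gl}(\mathbf n) \cong \mathfrak{gl}(\mathbf n)^\vee$ via the trace form must all be fixed coherently so that every vertex-block of $\kappa_2$ matches the corresponding block of $\mu$ simultaneously (with no residual factor of $2$ or sign flip). Once such a coherent convention is in place, the block decomposition forces the equality $\kappa_2 = \mu$ formally.
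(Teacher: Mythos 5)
Your construction is exactly the paper's: the same quiver (vertices indexed by the distinct stable summands, $\dim\Ext^1(F_i,F_j)$ arrows for $i\neq j$, half as many loops), the same block decomposition of $\Ext^k(F,F)$ via stability and Serre duality, and the same identification of the cup product with the moment map. The only difference is that the paper delegates the final cup-product-versus-commutator verification to Section 3.4 of \cite{Kaledin-Lehn-Sorger06} (and \cite{OGrady99}) rather than carrying out the block-by-block sign bookkeeping you sketch.
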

\begin{proof} For brevity, we use the notation
$$
\dext^i(A,B)=\dim\Ext^i(A,B)
$$
We first define 
the quiver $Q(F)$: the vertex set of $Q$ is the set $I=\{1,\dots,s\}$ of distinct stable factors of $F$; the number of edges between the $i$-th and the $j$-th vertex is equal to
\[
\left\{ \begin{array}{ll} \dext^1(F_i, F_i) /2 & \text{ if } i=j \\
\dext^1(F_i, F_j) & \text{ if } i\neq j
\end{array} \right.
\]
Since we will be passing to the quiver $\ov Q$, we can choose for each of these edges and loops an arbitrary orientation. The Cartan matrix (\ref{cartan}) is then defined by
$$
c_{ij}=\left\{ \begin{array}{cc}2-\dext^1(F_i,F_i)& \text{ if } i=j \\
-\dext^1(F_i,F_j)& \text{ if } i\neq j\\
\end{array} \right.
$$
we now pass to the double $\ov Q$ of $Q$ and we have
$$
\begin{aligned}
\Rep(\overline Q, \mathbf n)& =\bigoplus_{i=1}^s \End(V_i)^{\oplus \dext^1(F_i,F_i)}  \bigoplus_{i<j} \Big(  \End(V_i, V_j)^{\oplus \dext^1(F_i,F_j)} \oplus \End(V_j, V_i)^{\oplus \dext^1(F_j,F_i)} \Big) \\
&\cong \bigoplus_{i=1}^s \End(V_i)\otimes \Ext^1(F_i, F_i)  \bigoplus_{i<j} \Big( \End(V_i, V_j)\otimes \Ext^1(F_i, F_j)  \oplus \End(V_j, V_i)\otimes \Ext^1(F_j, F_i)  \Big)\\
&=\Ext^1(F, F).
\end{aligned}
$$
In a similar way
$$
\mathfrak{gl}(\mathbf n)=\overset{s}{\underset{i=1}\oplus}\Hom(V_i, V_i)=\Hom(F,F)=\Ext^2(F,F)^\vee
$$

The fact that  via these isomorphisms, the quadratic part of the Kuranishi map is a moment map (\ref{moment map}) is explained in section 3.4 of
 \cite{Kaledin-Lehn-Sorger06} and was already present in \cite{OGrady99}.
 
\end{proof}

A few remarks are in order. First of all, it should be pointed out that the construction of the quiver $Q(F)$ associated to the polystable sheaf $F$ can been done in full generality, without any restrictions on $F$.

In the case where $F$ is pure of dimension one, there is the following interpretation of $Q(F)$.
\be\label{divisor}
D_i=\supp F_i\,,\quad i=1,\dots,s\,,\qquad D=n_1D_1+\cdots+n_sD_s
\ee
(here $\supp( \cdot)$ denotes the Fitting support) so that
\be\label{divisor2}
[D]=c_1(F)
\ee
We also set 
\be\label{divisor3}
\aligned
\chi_i&=\chi(F_i)\,,\quad i=1,\dots,s\,,\qquad \chi=\chi(F)\\
v_i&=(0, [D_i], \chi_i)\,,\quad i=1,\dots,s\,,\qquad v=(0,[D], \chi)=\sum_{i=1}^sn_iv_i
\endaligned
\ee
Notice that
\[
\dext^1(F_i, F_i)=\dim M_{H_0}(v_i)=\frac{D_i^2}{2}+2=g(D_i)\,,\quad i=1,\dots,s\,,
\]
and that if $i \neq j$, then
\[
\dext^1(F_i, F_j)=D_i\cdot D_j.
\]

It follows that we can think of the quiver $Q$ as the ``dual graph'' of $\ov D:=D_1+\cdots+ D_s$ (in the sense that it has a vertex for every curve $D_i$ and for every $i \neq j$ it has $D_i \cdot D_j$ edged connecting $i$ and $j$), with $g(D_i)$ loops attached to the $i$th vertex.
It is also worth mentioning that if the K3 surface is general enough (e.g. it contains no rational curve), then we can deform each sheaf $F_i$ to a sheaf $F_i'$ with smooth support (this clearly does not alter the structure of the singularity) so that, up to the addition of the vertex loops, $Q$ is in fact the dual graph of a curve.

\begin{rem} \label{symmetries}\emph{
Suppose that there are two indices, say $i=1, 2$, for which the two curves $D_1$ and $D_2$ belong to the same linear system. Then, for every $j=1, \dots, s$, we have $D_1 \cdot D_j= D_2 \cdot D_j$, so that the quiver $\ov Q$ admits a symmetry which swaps the first and the second vertices. More generally, partitioning the index set $I=\{1, \dots, s\}$ according to the cohomology class of the curve of each vertex, we can define the subgroup $\mathcal G \subset \Aut(\ov Q)$ of the symmetries of $\ov Q$ preserving the curve class of every vertex.
}
\end{rem}

The last proposition allows us to start comparing the moduli space side of the picture with the quiver side. First some notation.

For any $\mathbf \beta=(\beta_1, \dots, \beta_s) \in \ZZ_{\ge 0}^s$, define
\[
v(\beta):= \sum_{i=1}^s \beta_i v_i\in H^*(S, \ZZ)
\]
so that $v(\beta)=v( \oplus F_i^{\beta_i})$. Notice that $v(\mathbf n)=v$ and that as soon as $\beta \neq 0$, $v(\beta)$ is a positive Mukai vector.

\begin{prop} \label{comparing 1} Let $F$ be the $H_0$ polystable as above, let $Q=Q(F)$ and $\mathbf n$ be as in Proposition \ref{quiver} and let $R_+$ be the set of positive roots for $Q$.
\begin{itemize}
\item [{\rm (1)}]For any $\mathbf \beta \in \ZZ_{\ge 0}^s$ we have $v(\beta)^2=d(\beta)$.
In particular, the moduli space $M_{H_0}(v(\beta))$ is non-empty and
\[
\dim M_{H_0}( v(\beta))=d(\beta)+2=2p(\mathbf \beta).
\]
\item[{\rm (2)}] The moduli space $M_{H_0}(\beta)$ contains a stable sheaf if and only if $\beta$ lies in $R_+$.
\item [{\rm (3)}] For $\mathbf n \in R_+$, decompositions 
$$
\mathbf n= \sum_{j=1}^r k_j \beta^{(j)}
$$
 with $\beta^{(j)}\in R_+(\mathbf n)$ and $k_j >0$, $j=1,\dots,r$,  are in one-to-one correspondence with the strata of the singular locus of $M_{H_0}(v)$ containing the polystable sheaf $F$ in their closure. In particular, the equations of $v$-walls that are relevant to $F$ (recall Definition \ref{relevant})
 are of the form
\be \label{equations walls concerning F}
\chi \, \left(\sum_{i=1}^s \beta_i D_i \cdot x\right)=\chi_\beta (D \cdot x ),\qquad x\in \Amp(S)\otimes_\ZZ\QQ
\ee
for some uniquely determined $\chi_\beta \in \ZZ$.
\end{itemize}
\end{prop}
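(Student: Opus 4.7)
My approach is to address the three parts in sequence, leveraging the combinatorial dictionary between the K3 geometry and the quiver $Q(F)$ from Proposition~\ref{quiver}.

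For (1), I would compute the Mukai self-pairing directly. Since each $v_i = (0, D_i, \chi_i)$ sits in the degree-one component, the Mukai form reduces to the intersection form: $v_i \cdot v_j = D_i \cdot D_j$. In parallel, from the construction of the Cartan matrix in the proof of Proposition~\ref{quiver}, one reads off $d_{ii} = 2g(D_i) - 2 = D_i^2$ and $d_{ij} = D_i \cdot D_j$ for $i \ne j$, whence $d(\beta) = \bigl(\sum_i \beta_i D_i\bigr)^2 = v(\beta)^2$. Non-emptiness of $M_{H_0}(v(\beta))$ is immediate from the presence of the $H_0$-polystable sheaf $\bigoplus_i F_i^{\beta_i}$, and Mukai's dimension formula then yields $\dim M_{H_0}(v(\beta)) = v(\beta)^2 + 2 = 2p(\beta)$.

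For (2), I would handle the two implications separately. Necessity: any $H_0$-stable sheaf $F'$ with Mukai vector $v(\beta)$ is simple, which via $\dim \Ext^1(F',F') \ge 0$ forces $d(\beta) \ge -2$; moreover if $\supp(\beta)$ were disconnected as a subgraph of $Q$, then the effective curve $\sum \beta_i D_i$ would split into numerically (hence, on a K3, geometrically) disjoint subcurves, forcing $F'$ to decompose as a direct sum and contradicting stability. Sufficiency: for $\beta \in R_+$, I would pass to a polarization $H$ in an adjacent chamber, use Theorem~\ref{Yoshioka non empty} to see that $M_H(v(\beta))$ is non-empty of dimension $2p(\beta)$, and argue that the image of the birational contraction $h\colon M_H(v(\beta)) \to M_{H_0}(v(\beta))$ meets the $H_0$-stable locus, yielding the desired sheaf.

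For (3), the bijection is obtained by matching two parametrizations of strata. Proposition~\ref{walls and strata}(2) indexes the strata of the singular locus by decompositions $v = \sum_j m_j w^{(j)}$ with each $w^{(j)}$ admitting an $H_0$-stable representative. A stratum contains $[F]$ in its closure precisely when each stable factor degenerates to a polystable sheaf assembled from the $F_i$'s, which by (2) is the case exactly when $w^{(j)} = v(\beta^{(j)})$ for $\beta^{(j)} \in R_+$ with $\sum_j m_j \beta^{(j)} = \mathbf n$; the constraints $m_j \ge 1$ and $\beta^{(j)} \ne 0, \mathbf n$ place each $\beta^{(j)}$ in $R_+(\mathbf n)$. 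The wall equations (\ref{equations walls concerning F}) then emerge directly from the slope equality $\mu_H\bigl(\bigoplus F_i^{\beta_i}\bigr) = \mu_H(F)$ for a destabilizing sub-sheaf of the form $\bigoplus_i F_i^{\beta_i} \subset F$, which rearranges into $\chi \cdot \bigl(\sum_i \beta_i D_i \cdot H\bigr) = \chi_\beta \cdot (D \cdot H)$ with $\chi_\beta := \sum_i \beta_i \chi_i$.

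The main obstacle I anticipate is the sufficiency direction of (2) when $v(\beta)$ is non-primitive: a nearby polarization $H$ need not be $v(\beta)$-generic, so $M_H(v(\beta))$ can itself harbour polystable strata and one cannot argue by naive smoothness of the target of $h$ alone. Here the cleanest route is to appeal to the formality result of Section~\ref{Formality} and the \'etale-slice description of Section~\ref{G-equi-kuran}, identifying a neighbourhood of $[\bigoplus F_i^{\beta_i}]$ with a quiver variety and invoking Crawley-Boevey's Theorem~\ref{CBT1} to produce a simple representation of dimension $\beta$, whose $G(\beta)$-orbit corresponds under the identification to an $H_0$-stable sheaf.
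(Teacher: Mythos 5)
Your route coincides with the paper's where the paper actually argues anything: the printed proof dismisses (1) and (2) as immediate consequences of the definitions and of Proposition \ref{walls and strata}, and for (3) it gives exactly the bijection you describe --- to a decomposition $\mathbf n=\sum_j k_j\beta^{(j)}$ it attaches the stratum of polystable sheaves $\oplus_j F(\beta^{(j)})^{k_j}$ with $F(\beta^{(j)})$ an $H_0$-stable sheaf of Mukai vector $v(\beta^{(j)})$ (non-empty by (1)--(2)), and it sees $[F]$ in the closure by degenerating each $F(\beta^{(j)})$ to $\oplus_i F_i^{\beta_i^{(j)}}$ inside $M_{H_0}(v(\beta^{(j)}))$, leaving the converse to the reader. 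Your computation in (1) (namely $d_{ii}=\dim\Ext^1(F_i,F_i)-2=D_i^2$ and $d_{ij}=\dim\Ext^1(F_i,F_j)=D_i\cdot D_j$) is correct, as is your identification $\chi_\beta=\sum_i\beta_i\chi_i$ in (3), which agrees with the formula $\chi_\beta=\frac{\chi}{d}\sum d_i\beta_i$ used later in Section \ref{Statement of the main theorem} because all the $F_i$ have equal $H_0$-slope.

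The part you flesh out beyond the paper, namely (2), has two genuine soft spots. First, in the necessity direction, the claim that disconnected support of $\beta$ forces $F'$ to decompose is not sound as written: a stable sheaf with Mukai vector $v(\beta)$ is supported on \emph{some} effective divisor in the class $\sum\beta_i[D_i]$, not on the specific curve $\sum\beta_i D_i$, and two effective classes with zero intersection number need not be represented by disjoint curves (several fibers of an elliptic fibration, or a non-reduced member of the class, are counterexamples to ``numerically disjoint hence geometrically disjoint''). One should instead argue via the vanishing of $\Hom$ and $\Ext^1$ between the stable constituents of a Jordan--H\"older filtration of $F'$, or via the local quiver model. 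Second, your fallback for sufficiency invokes Theorem \ref{CBT1}, but Crawley-Boevey's criterion for the existence of a simple representation requires, besides $\beta$ being a positive root, the strict inequality $p(\beta)>\sum_i p(\beta^{(i)})$ for \emph{every} decomposition into positive roots; mere membership in $R_+$ does not supply this (for instance $\beta=m\beta_0$ with $d(\beta_0)=0$ and $m\ge 2$ is a positive root violating the inequality), so this route establishes strictly less than statement (2) asserts. The paper itself glosses over both points, but as a standalone proof your argument for (2) is incomplete at exactly these two places, and (3) relies on (2).
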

\begin{proof}
The first two statements are immediate consequence of the definitions and of the description of the singular locus of $M_{H_0}(v)$ given in Proposition \ref{walls and strata}. Consider a decomposition $\mathbf n= \sum_{j=1}^r k_j \beta^{(j)}$, with $\beta^{(j)}\in R_+(\beta)$,  $j=1,\dots,r$. By the first two statements we know that for each $\beta^{(j)}$, the stable locus of $M_{H_0}(v(\beta^{(j)}))$ is non-empty, so we can associate to the decomposition above the strata parametrizing polystable sheaves of the form $\overset{r}{\underset{j=1}{\oplus}} F(\beta^{(j)})^{k_j}$, where for each $j$, the sheaf $F(\beta^{(j)})$ is a $H_0$-stable sheaf in $M_{H_0}(v(\beta^{(j)}))$. To see that these are the strata containing $F=\overset{s}{\underset{i=1}{\oplus}} F_i^{n_i}$ in their closure, we only need to notice that within each $M_{H_0}(v(\beta^{(j)}))$ we can deform the stable sheaves $F(\beta^{(j)})$ to the polystable sheaf $\overset{s}{\underset{i=1}{\oplus}} F_i^{\beta_i^{(j)}}$ whose support is
\be\label{decompD}
 \Delta_j=\beta^{(j)}_1D_1+\cdots+\beta^{(j)}_sD_s\,,\quad j=1,\dots, r
\ee
 so that
\be\label{decomp-Del}
 D=n_1D_1+\cdots+n_sD_s=k_1\Delta_1+\cdots+k_r\Delta_r
\ee
In this way we assign to each decomposition a stratum containing $[F]$ in its closure.
The description of the converse assignement 
is left to the reader.
\end{proof}

Consider the setting and the notation of Proposition \ref{quiver}. As our aim is to study the singularity of $M_{H_0}(v)$ at $[F]$ and its symplectic resolutions induced by the polarizations which are adjacent to $H_0$, we only need to focus on the $v$-walls in $\Amp(S)$ that contain $H_0$. We now show that such $v$-walls correspond to the walls in $\mathbf n^ \perp$ as described above.

We first need some notation.
Set
\[
d_i:=H_0 \cdot D_i, \quad d:=\sum n_i d_i = H_0 \cdot D, \quad \mathbf d:=(d_1, \dots, d_s),
\]
and for any ample $H$
\[
a_i:=H \cdot D_i, \quad  h=\sum n_i a_i = H \cdot D,\quad \mathbf a:=(a_1, \dots, a_s)
\]
By Proposition \ref{comparing 1}, since the only things that matters for our purpose are the intersection numbers of $H$ with the curves of the form $\sum \beta_i D_i$, we can project the ample cone of $S$ onto the cone
\[
\mc A=\{\mathbf a=(a_1, \dots, a_s) \in \QQ^s,\,\, a_i \ge 0 \},
\]
and consider instead the $v$-walls in $\mc A$. Under this projection, the class of $H_0$ is sent to the point $\mathbf d \in \mc A$.
Since stability with respect to a given polarization only depends on the positive ray determined by the polarization itself, we can consider instead of $\mc A$ the transverse slice
\[
\mc S=\{ \mathbf a \in \mc A \, | \sum_i a_i n_i=d \}.
\]
In this space the equations (\ref{equations walls concerning F}) of the $v$-walls that pass through $H_0$ and that make $F$ strictly semistable become
\be \label{walls in S}
\chi \, \sum a_i  \beta_i - d \, \chi_\beta=0,
\ee
where
\[
\chi_\beta=  \ff{\chi}{d} \sum d_i \beta_i.
\]

\begin{lem} \label{affine morphism}
The affine morphism
\[
\begin{aligned}
\Xi: \mc S &\longrightarrow {\W}_{\mathbf n}=\mathbf n^\perp \otimes \QQ, \\
(a_1, \dots, a_s) & \longmapsto (a_1-d_1,  \dots, a_s-d_s) 
\end{aligned}
\]
sends $\mathbf d$ to the origin and maps every $v$-wall that is relevant to $[F]$ to a wall in ${\W}_{\mathbf n}$. More specifically, it maps the wall $\{\chi \, \sum a_i  \beta_i- d \, \chi_\beta=0 \}$ to the wall $\W _\beta$, where $\beta=(\beta_1, \dots, \beta_s)$ (notation as in (\ref{W alpha})).
\end{lem}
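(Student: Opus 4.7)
The proof is essentially an explicit calculation, so the plan is to unpack the definitions and match equations on both sides.

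First I would verify that $\Xi$ lands in $\W_{\mathbf n} = \mathbf n^\perp \otimes \QQ$. This uses the defining relation of $\mc S$, namely $\sum a_i n_i = d$, combined with the identity $d = \sum d_i n_i$ (which holds since $d = H_0 \cdot D$ and $D = \sum n_i D_i$). Subtracting gives $\sum (a_i - d_i) n_i = 0$, i.e.\ $\Xi(\mathbf a) \cdot \mathbf n = 0$. It is then immediate that $\Xi(\mathbf d) = 0$, which accounts for the fact that $H_0$ is sent to the origin.

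Next I would match the wall equations. A relevant $v$-wall, written in the slice $\mc S$, has equation
\[
\chi \sum_{i=1}^s a_i \beta_i - d\, \chi_\beta = 0,
\]
for some $\beta \in \ZZ_{\ge 0}^s$, where $\chi_\beta = \tfrac{\chi}{d}\sum_i d_i \beta_i$ by the formula recalled just before the statement. Substituting this expression for $\chi_\beta$ into the wall equation, the factor $d$ cancels and the equation becomes
\[
\chi \sum_{i=1}^s (a_i - d_i) \beta_i \;=\; \chi \cdot \bigl(\Xi(\mathbf a) \cdot \beta\bigr) \;=\; 0.
\]
Since $v = (0, D, \chi)$ is the Mukai vector of a pure dimension one sheaf with $\chi \neq 0$, we can divide by $\chi$ and conclude that the image of the wall is exactly $\W_\beta = \{\theta \in \W_{\mathbf n} \mid \theta \cdot \beta = 0\}$, as desired.

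Finally, I would briefly note that $\beta$ really does define a wall in Nakajima's sense, i.e.\ $\beta \in R_+(\mathbf n)$: by Proposition~\ref{comparing 1}(3), the relevant $v$-walls at $[F]$ correspond to decompositions $\mathbf n = \sum k_j \beta^{(j)}$ with $\beta^{(j)} \in R_+(\mathbf n)$, and these are exactly the positive roots appearing as the $\beta$ in (\ref{equations walls concerning F}). No step here is delicate --- the whole content is the cancellation that makes the Mukai-side equation $\chi\sum a_i\beta_i - d\chi_\beta = 0$ proportional to the quiver-side linear form $\sum(a_i - d_i)\beta_i$; the only thing to watch is that $\chi\neq 0$, which is built into the positivity of $v$.
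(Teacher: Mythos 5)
Your proposal is correct and follows essentially the same route as the paper's proof, which is exactly the substitution $\theta_i = a_i - d_i$ into the wall equation followed by the cancellation $d\chi_\beta = \chi\sum d_i\beta_i$ and division by $\chi \neq 0$. The extra checks you include (that $\Xi$ lands in $\mathbf n^\perp$ via $\sum a_i n_i = d = \sum d_i n_i$, and that $\beta \in R_+(\mathbf n)$ via Proposition \ref{comparing 1}) are left implicit in the paper but are welcome additions.
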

\begin{proof} In accordance with the notation of Section \ref{quivers}, we let $(\theta_1, \dots, \theta_s)$ be the coordinates on $\W_{\mathbf n}$.
Subsituting $\theta_i=a_i -d_i$ in (\ref{walls in S}) we find
\[
\chi \, \sum \beta_i \theta_i  +\chi \sum \beta_i d_i- d \, \chi_\beta=0,
\]
and since $d\chi_\beta=\chi \sum d_i \beta_i$ we get
\[
\chi \, \sum \beta_i \theta_i =0
\]
which is the equation for $\W_\beta$.
\end{proof}

Notice that if the group $\mathcal G$ defined in Remark \ref{symmetries} is non-trivial, the image of $\Xi$ is not the whole of $\W_\mathbf n$, but is the $\G$-invariant subspace $\W^{\G}_\mathbf n \subset \W_\mathbf n$ and, similarly, the walls that come from $\mc S$ are the  walls $\W_\beta$ for which $\beta$ is $\G$-invariant.

Finally, we get to the statement of the main theorem, whose proof will cover Section \ref{proof of main theorem}.

\begin{theorem} \label{main}
Let $H_0$ be a polarization on $S$ and let $F_1, \dots, F_s$ be pairwise distinct $H_0$-stable sheaves. Let $V_1, \dots, V_s$ be vector spaces of dimension $n_1, \dots, n_s$ respectively,  let
\be\label{basic F}
F=\oplus_{i=1}^s F_i \otimes V_i,
\ee
be the corresponding $H_0$-polystable sheaf and let $v$ be its Mukai vector. Also set:
$$
G=\Aut(F)=\overset s{\underset{l=1}\prod }\GL(V_i)
$$

\begin{enumerate}
\item[ {\rm (i)}] Suppose that $F$ is pure of dimension one (or satisfies the formality property of Definition \ref{formality}). Then there is a local (analytic) isomorphism
\[
\psi: (\mathfrak{M}_0,0) \cong (M_{H_0}(v), [F] )
\]
\item[{\rm (ii)}] Suppose that $F$ is pure of dimension one. Then for every chamber $\mathfrak C \subset \Amp(S)$ containing $H_0$ in its closure, we can find a chamber $\mathfrak D \subset \mathbf{n}^\perp$ such that for every $H \in \mathfrak C $ and every $\theta \in  \mathfrak D$ the symplectic resolutions
\[
\xi: \mathfrak M_{\theta}(\mathbf{ n}) \to \mathfrak M_0(\mathbf{ n}), \quad \text{ and } \quad h: M_H(v) \to M_{H_0}(v),
\]
correspond to each other via $\psi$. This means that, letting $\ov \U  \subset M_{H_0}(v)$ and $\ov \V \subset \mathfrak M_0(\mathbf{ n})$ be two open neighborhoods of $[F]$ and $0$, respectively, that are isomorphic via $\psi$, there is a commutative diagram
\[
\xymatrix{
 M_H(v) \times_{h} \ov \U \ar[d] \ar[r] & \mathfrak M_{\theta}(\mathbf{ n}) \times_{\xi}\ov \V \ar[d]\\
 \ov\U \ar[r]_\psi & \ov \V
}
\]
\item[{\rm (iii)}] The assigment of a chamber in $\mathbf{n}^\perp\otimes \mathbb Q$ for every chamber in $\Amp(S)$ which is adjacent to $H_0$ is induced by the morphism of Lemma \ref{affine morphism}. In other words, if $H$ is such that $H \cdot D= H_0 \cdot D$ the morphism is given by the formula
$$
H \longmapsto \chi_H((g_1,\dots, g_s))=\prod_{i=1}^s\det(g_i)^{(D_i\cdot H-D_i \cdot H_0)}
$$ 
where $D_i=c_1( F_i)$, for $i=1,\dots, s$.
\end{enumerate}
\end{theorem}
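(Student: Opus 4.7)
My plan is to combine the \'etale slice construction of Section \ref{G-equi-kuran} with Propositions \ref{quiver} and \ref{Q-is-a-cone}. A neighborhood of $[F]$ in $M_{H_0}(v)$ is analytically isomorphic to $Z \sslash G$, where $Z \subset \Quot_{H_0}^{ss}$ is a $G$-invariant \'etale slice at a point $q_0$ lying over $[F]$ (as recalled around (\ref{Q slice})--(\ref{etale morphism})). The dgla-formality hypothesis on $F$ lets me invoke Proposition \ref{Q-is-a-cone} to produce a $G$-equivariant local analytic isomorphism $(Z, q_0) \cong (\kappa_2^{-1}(0), 0)$, and Proposition \ref{quiver} identifies $\kappa_2^{-1}(0) \subset \Ext^1(F,F)$ $G$-equivariantly with $\mu_{\mathbf n}^{-1}(0) \subset \Rep(\ov Q, \mathbf n)$. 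Passing to $G$-quotients yields the isomorphism $\psi$. When $F$ is pure of dimension one, Theorem \ref{formality-2} guarantees that the formality hypothesis is automatic.

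\textbf{Plan for (ii).} To promote $\psi$ to a comparison of GIT resolutions I will match the fractional ample classes on both sides that control the VGIT chamber structure. On the moduli space side, the determinant line bundle construction associates to a polarization $H$ a $\GL(\VV)$-linearized line bundle $\lambda_H$ on $\Quot_{H_0}$, whose semistable locus on the $\GL(\VV)$-saturation of $Z$ produces $M_H(v)$ and whose restriction to $Z$ is a $G$-linearized line bundle on the affine $G$-scheme $Z$. The latter is determined by a character $\chi_{H,H_0}\in \Hom(G,\CC^\times)$ (up to a $G$-invariant regular function, which will not affect the GIT quotient). On the quiver side, the fractional line bundle producing $\mathfrak M_{\theta}(\mathbf n)$ is the trivial bundle on $\Rep(\ov Q,\mathbf n)$ twisted by $\chi_\theta$. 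If I can show $\chi_{H,H_0}=\chi_H$ (the character of (iii)) via the identification $\psi$, then King's theorem together with Proposition \ref{Nak-lem} and Lemma \ref{affine morphism} implies: first, that $H$ is $v$-generic in a chamber $\mathfrak C$ adjacent to $H_0$ iff $\chi_H$ lies in a chamber of $\W_{\mathbf n}$; second, that the $H$-semistable locus in $Z$ coincides with the $\chi_H$-semistable locus; and third, that $S_H$-equivalence matches $S_{\chi_H}$-equivalence. The commutative diagram then follows from Luna's slice theorem applied to both pairs (Quot/moduli and $\mu^{-1}(0)/\mathfrak M$).

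\textbf{Plan for (iii).} To compute the character $\chi_{H,H_0}$ I will restrict the universal quotient over $\Quot_{H_0}\times S$ to the fiber over $q_0$ to recover $F=\oplus F_i\otimes V_i$, on which $G=\prod \GL(V_i)$ acts on the $V_i$ factors. The determinant line bundle associated to the K-theory class encoding $H$ evaluates at $q_0$ to a tensor product $\bigotimes_i \det(V_i)^{\otimes \ell_i(H)}$ with $\ell_i(H)$ computed by Riemann--Roch on $S$: up to $H$-independent contributions the exponent is $D_i\cdot H$. Forming $\lambda_H \otimes \lambda_{H_0}^{-1}$ eliminates those contributions and yields exactly $\prod_i \det(g_i)^{D_i\cdot H - D_i\cdot H_0}$. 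The normalization $H\cdot D=H_0\cdot D$ from Lemma \ref{affine morphism} ensures $\sum_i n_i(D_i\cdot H - D_i\cdot H_0)=0$, i.e.\ $\chi_H\in \mathbf n^\perp$, which is the admissibility condition of Remark \ref{center acts trivially}.

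\textbf{Main obstacle.} The hard part is not (i), which is essentially bookkeeping on top of Sections \ref{Formality}--\ref{G-equi-kuran}, but the passage in (ii) from a local-analytic isomorphism of the base spaces $\mathfrak M_0$ and $M_{H_0}(v)$ to a local isomorphism of their resolutions. This requires not just matching line bundles up to a character, but showing that $H$-semistability of a sheaf parametrized by $Z$ is equivalent to $\chi_H$-semistability of the corresponding representation in $\Rep(\ov Q,\mathbf n)$, including at the boundary where $H$ sits on a sub-wall of its chamber; i.e.\ destabilizing sub-sheaves in the Gieseker sense must correspond to destabilizing sub-representations in the King sense. The natural way to bridge this is via Luna's \'etale slice theorem applied on both sides, but one must verify that the $\GL(\VV)$-saturation of the $H$-semistable locus inside $Z$ does not acquire extraneous orbits; this is essentially the content of choosing $H$ adjacent to $H_0$, since then no new $S$-equivalence classes intrude.
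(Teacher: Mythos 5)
Your part (i) follows the paper's argument exactly: \'etale slice, Proposition \ref{Q-is-a-cone}, Proposition \ref{quiver}, pass to quotients. For (ii)--(iii) your strategy (determinant line bundles restricted to the slice, identification of the induced $G$-character, King's criterion, comparison of wall structures via Lemma \ref{affine morphism}) is the same one the paper follows, and your character computation in (iii) is essentially Lemma \ref{character 1}. But the plan has genuine gaps at the points where the real work happens. First, you assert that the $H$-semistable locus in $Z$ \emph{coincides} with the $\chi_H$-semistable locus. The paper only proves, and only needs, the saturated inclusion $Z^H \subset Z^{ss}_{\ell H}$ (Proposition \ref{from H to L}, Corollary \ref{cor 2}); the reverse implication of the Hilbert--Mumford comparison fails on the affine slice because one-parameter subgroups of $G'$ need not admit limits in $Z$, so one cannot conclude that a GIT-semistable point of $Z$ is Gieseker-semistable. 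Saturation of the inclusion (which holds because $H$ is $v$-generic, so $Z^H$ lands in the GIT-\emph{stable} locus) is what makes the cartesian diagrams close up. Second, GIT semistability cannot be transported across the merely local analytic isomorphism $\varphi:(Z,q_0)\cong(\mu^{-1}(0),0)$ without first shrinking to \emph{saturated} invariant neighborhoods $\U$ and $\V$ on both sides (Proposition \ref{saturation}); otherwise the Hilbert--Mumford limits relevant on one side may leave the neighborhood on the other. You do not address this.

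Two further omissions. Your weight computation implicitly compares $\dim \VV'$ with $D'\cdot H$ and $\chi(\F_z')$; for this one must first twist by a large multiple of $H_0$ so that Le Potier's criterion applies and $h^0=\chi$ for all relevant subsheaves (Theorem \ref{lepotier}, Corollary \ref{coro}), and one must handle the sign issue when $\chi<0$, which forces replacing $H$ by $tH_0-H$ (Lemma \ref{H to H'}); without this normalization the identification of Gieseker destabilizing subsheaves with King destabilizing subrepresentations does not go through. Finally, ``Luna's slice theorem applied to both pairs'' does not by itself give the cartesian square relating $M_H(v)$ and $Z^H\sslash G$: the slice $Z$ is a Luna slice at $q_0$ for the $\GL(\VV)$-action on $\Quot^{ss}_{H_0}$, but $q_0$ does not have closed orbit in the $H$-semistable locus (since $F$ is generally not $H$-polystable), so $Z^H$ is not a slice there. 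The paper instead proves directly (Lemma \ref{bijective}) that $S_H$-equivalence of sheaves matches $S$-equivalence of $G$-orbits in $Z^H$, using the Dr\'ezet--Luna lemma together with a Jordan--H\"older degeneration argument inside $\ov{\GL(\VV)\cdot x}\cap Q^H$. You correctly flag this as the main obstacle, but the proposal does not supply the argument that resolves it.
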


\begin{rem} \emph{
Whether or not $F$ is a pure dimension one sheaf, statements $(ii)$ and $(iii)$ of the theorem holds true whenever the morphism $h: M_H(v) \to M_{H_0}(v)$ is regular over $F$.
}
\end{rem}

\section{Proof of the main Theorem}\label{proof of main theorem}

We consider as in (\ref{Q slice}) an \'etale slice 
\be\label {Q slice2}
Z \subset \Quot_{H_0}
\ee
passing through a point $q_0$ corresponding to the $H_0$ polystable sheaf 
\be\label{basic F 2}
F=\oplus_{i=1}^s F_i \otimes V_i,
\ee
and we let $\F$ be the restriction to $Z \times S$ of the universal family over $\Quot_{H_0} \times S$.

Let us start with the proof of part $(i)$, which is straightforward.  By Proposition \ref{quiver} there is a quiver $Q$ such that
\be \label{mu e k2}
\mu_{\mathbf n}^{-1}(0) \cong k_2^{-1}(0)
\ee
$G(\mathbf n)$-equivariantly. For simplicity, we set
$$
G:=G(\mathbf{n}).
$$
Recall that $G \cong \Aut(F)$.

By Theorem \ref{stabil-LM} the Lazersfeld--Mukai bundle $M_F$ is polystable and hence by Zhang's result (Theorem \ref{Zhang}), it satisfies the formality property. Using Proposition  \ref{Q-is-a-cone} applied to $M_F$ and Proposition \ref{defm and deff}, it follows that there is a local $G(\mathbf{n})$-equivariant isomorphism $Z \cong k_2^{-1}(0)$, which induces, locally around $0$ and $[F]$, respectively, an isomorphism between $Z\sslash G $ and $\mathfrak{M}_0$, .
Since the morphism
\[
\epsilon: Z\sslash G \to M_{H_0}(v)
\]
of (\ref{etale morphism}) is \'etale
we may conclude that $\mathfrak{M}_0$ and  $M_{H_0}(v)$ are isomorphic, locally around $0$ and $[F]$, respectively.

The proof of part (ii) will be divided in various steps. Consider the resolution
\[
h: M_H(v) \to M_{H_0}(v).
\]
Our aim is to show that locally on $M_{H_0}(v)$  the resolution $h$  can be expressed, via quiver varieties, 
in terms of variations of GIT quotients as in (\ref{GIT-desing}). We will do this in two steps, the first consists in using the open subset of $Z$ parametrizing the $H$-semistable sheaves, and the second will be to compare this open subset with an appropriate open subset of $\mu_{\mathbf{n}}^{-1}(0)$.

\subsection{First step}\label{First step}

Let $H \in \Amp(S)$ be a polarization that is adjacent to $H_0$, and let
\be\label{semist-locus}
Z^H =\{q\in Z \,\,|\,\,F_q\,\,\text{is $H$-semistable}\,\,\}
\ee
be the locus parametrizing $H$-semistable points in  $Z$. The restriction to $Z^H$ of the family $\F$ in (\ref{family on slice}) defines a classifying morphism $Z^H \to M_H$. Since this morphism is $G$-invariant we get a commutative diagram
\be \label{diagram}
\xymatrix{
M_H(v)\ar[d]_h &  Z^H \sslash G\ar[d]^\rho \ar[l]_{\,\,\,\,\,\,\,\,\,\eta\quad\,\,} \\
M_{H_0}(v)& Z \sslash G \ar[l]_\epsilon
}
\ee

\begin{prop} \label{MH e ZH}
The diagram above is cartesian.
\end{prop}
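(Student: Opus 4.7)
My plan is to derive the cartesian property from Luna's étale slice theorem applied to the closed $\GL(\VV)$-orbit through $q_0$ in $\Quot_{H_0}^{ss}$, whose stabiliser is $G\cong\Aut(F)$. By construction of the slice $Z$ and Luna's fundamental lemma, the induced $\GL(\VV)$-equivariant morphism
\[
\Psi\colon \GL(\VV)\times^G Z \longrightarrow \Quot_{H_0}^{ss}
\]
is strongly étale: étale, with $\GL(\VV)$-saturated open image $\Quot'\subset\Quot_{H_0}^{ss}$, and fitting in a cartesian square
\[
\xymatrix{
\GL(\VV)\times^G Z \ar[d] \ar[r]^-\Psi & \Quot' \ar[d] \\
Z\sslash G \ar[r]^-\epsilon & M_{H_0}(v)
}
\]
whose vertical arrows are the good $\GL(\VV)$-quotients; by saturation of $\Quot'$ the square remains cartesian if $\Quot'$ is replaced by $\Quot_{H_0}^{ss}$.

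I would then use that, since $H$ is adjacent to $H_0$, every $H$-semistable sheaf with Mukai vector $v$ is also $H_0$-semistable, so that $\Quot_H^{ss}\subset\Quot_{H_0}^{ss}$ is open. Because $\Psi$ sends the class of $(g,z)$ to the sheaf parametrised by $g\cdot z$, we obtain
\[
\Psi^{-1}(\Quot_H^{ss}) \;=\; \GL(\VV)\times^G Z^H,
\]
and by the cartesian square above this open subscheme is identified with $\Quot_H^{ss}\times_{M_{H_0}(v)} (Z\sslash G)$. The next step is to pass to good $\GL(\VV)$-quotients: on the left this gives $Z^H\sslash G$, while on the right, using that $\Quot_H^{ss}\to M_H(v)$ is itself a good $\GL(\VV)$-quotient and that $\GL(\VV)$ acts trivially on $M_{H_0}(v)$ and on $Z\sslash G$, it gives $M_H(v)\times_{M_{H_0}(v)}(Z\sslash G)$. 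The resulting isomorphism $Z^H\sslash G \cong M_H(v)\times_{M_{H_0}(v)}(Z\sslash G)$ is precisely the cartesian property of Proposition \ref{MH e ZH}, the two projections being the maps $\eta$ and $\rho$.

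The step that will require most care is the base-change compatibility of good quotients used in the final passage to quotients: affine-locally it amounts to the identity $(R\otimes_{R^\Gamma}S)^\Gamma=S$ for $\Gamma=\GL(\VV)$ reductive acting trivially on $S$, but one still has to glue this across the (a priori non-affine) base $M_{H_0}(v)$. Every other ingredient — the strong étaleness of $\Psi$, the openness of $\Quot_H^{ss}\subset\Quot_{H_0}^{ss}$, and the compatibility of the classifying morphism on the family $\F$ with $\eta$ and $\rho$ — is already in place from the constructions in the preceding sections.
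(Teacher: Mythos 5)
Your strategy is sound and genuinely different from the paper's. The paper does not work upstairs on the Quot scheme at all: it first proves (Lemma \ref{bijective}) that $\eta$ induces a bijection between $\rho^{-1}(z)$ and $h^{-1}(\epsilon(z))$ for every $z\in Z\sslash G$ --- surjectivity coming from the Drezet--Luna property of the slice (Lemma \ref{proprieta slice}), injectivity from a hands-on orbit-closure argument with Jordan--H\"older filtrations and one-parameter subgroups --- and then observes that $Z^H\sslash G\to M_H(v)\times_{M_{H_0}(v)}Z\sslash G$ is a bijective, birational morphism onto a variety that is smooth (being \'etale over the smooth $M_H(v)$), hence an isomorphism. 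Your argument replaces all of this by the strong \'etaleness of $\GL(\VV)\times^G Z\to\Quot_{H_0}^{ss}$ followed by base change of good quotients; this is cleaner, dispenses with Lemma \ref{bijective} entirely, and has the added virtue of \emph{producing} the good quotient $Z^H\sslash G$ rather than presupposing its existence (an open $G$-invariant subset of an affine variety need not admit one).

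That said, the step you dismiss as ``already in place'' is the one that actually needs work, while the step you flag as delicate is routine. Base-change compatibility of good quotients is standard: invariants under a linearly reductive group commute with base change along the invariant ring (Reynolds operator), the base change $Z\sslash G\to M_{H_0}(v)$ is \'etale hence flat, and gluing is harmless because good quotient maps are affine. What is \emph{not} in place is the assertion that $\Quot_H^{ss}\to M_H(v)$ is a good $\GL(\VV)$-quotient, where $\Quot_H^{ss}$ denotes the $H$-semistable locus inside the Quot scheme built from $H_0$. The standard construction of $M_H(v)$ uses a different Quot scheme (quotients of $\VV'\otimes\mc O_S$ with $\dim \VV'=P_H(m')$ for $m'\gg 0$, and $\dim\VV'\neq\dim\VV$ in general), so identifying $M_H(v)$ with a quotient of an open subset of $\Quot_{H_0}^{ss}$ requires Le Potier's criterion for the determinant linearization $\lambda_\F(\ell H)$ on all of $\Quot_{H_0}$, in \emph{both} directions, together with a comparison of the two corepresenting quotients. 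This is true --- it is essentially the master-space construction of Zowislok, which the paper cites --- but the paper itself only establishes the relevant semistability inequality on the slice $Z$, only in one direction, and only in Section \ref{linear}, which comes after this proposition. You should either supply this identification or cite it explicitly; as written, it is the one genuine gap in your argument.
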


Before proving the proposition we need a technical lemma, which uses the fact that
the image of the natural morphism (recall the notation (\ref{etale morphism}))
\[
\GL(\VV) \times Z \to \Quot_{H_0}^{ss}
\]
is a saturated open subset. Following \cite{Drezet-Luna}, the precise statement we will use is

\begin{lem}[\cite{Drezet-Luna}, page 2] \label{proprieta slice} 
Let $\Gamma$ be a reductive algebraic group acting on an affine variety $Y$. Let $y \in Y$ be a point whose orbit $\Gamma y$ is closed, let $\Gamma_y$ be the stabilizer of $y$ in $\Gamma$, and let $Z \subset Y$ be an \'etale slice for $y$ in $Y$. Then for every point $y' \in Y$ that is $S_{\Gamma}$-equivalent to a point $z\in Z$, the slice $Z$ intersects the orbit $\Gamma y'$. In other words, the slice $Z$ intersects all the $\Gamma$-orbits that are $S_{\Gamma}$-equivalent to the $\Gamma$-orbits of its points. Moreover, given $z \in Z$, the natural morphism
\be \label{drezet 2}
\sigma: \Gamma \times \pi^{-1}_Z(\pi_Z(z)) \to \pi^{-1}_Y(\pi_Y(z))
\ee
is surjective and $\Gamma_y$--invariant and
\be \label{drezet 3}
\Gamma \times \pi^{-1}_Z(\pi_Z(z)) \sslash \Gamma_y \to \pi^{-1}_Y(\pi_Y(z))
\ee
is an isomorphism.
\end{lem}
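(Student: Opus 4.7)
The plan is to derive this lemma directly from Luna's étale slice theorem, which is precisely what underlies the very definition of étale slice used throughout the paper. I would invoke two foundational properties that such a slice $Z$ must satisfy in our setting (where $\Gamma$ is reductive, $\Gamma y$ is closed, and hence $\Gamma_y$ is reductive by Matsushima's theorem): (a) the natural $\Gamma$-equivariant morphism $\tau\colon\Gamma\times^{\Gamma_y}Z\to Y$ is étale and its image $U\subset Y$ is a \emph{saturated} $\Gamma$-invariant open subset (i.e.\ $U=\pi_Y^{-1}(\pi_Y(U))$); (b) the induced map on categorical quotients $\bar\tau\colon Z\sslash\Gamma_y\to Y\sslash\Gamma$ is étale, and the commutative square
\[
\xymatrix{
\Gamma\times^{\Gamma_y}Z \ar[r]^-{\tau}\ar[d] & U \ar[d]^{\pi_Y}\\
Z\sslash\Gamma_y \ar[r]^-{\bar\tau} & \pi_Y(U)
}
\]
is cartesian. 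These are exactly the conclusions of the étale formulation of Luna's theorem recalled in \cite{Drezet-Luna}.

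For the first claim, let $y'\in Y$ be $S_\Gamma$-equivalent to some $z\in Z$, which means $\pi_Y(y')=\pi_Y(z)$. Since $z\in Z\subset U$ we have $\pi_Y(z)\in\pi_Y(U)$, and saturation of $U$ forces $y'\in U$. Surjectivity of $\tau$ onto $U$ then produces $\gamma\in\Gamma$ and $z'\in Z$ with $y'=\gamma z'$, so the orbit $\Gamma y'$ meets $Z$ at $z'$.

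For the second and third claims I would exploit the cartesian square above. Pulling back the point $\pi_Z(z)\in Z\sslash\Gamma_y$ along the right vertical arrow yields $\pi_Y^{-1}(\pi_Y(z))\cap U$, which by saturation equals the full fiber $\pi_Y^{-1}(\pi_Y(z))$; pulling it back along $\bar\tau$ first yields $\Gamma\times^{\Gamma_y}\pi_Z^{-1}(\pi_Z(z))$. Cartesianness of the square therefore provides an isomorphism
\[
\Gamma\times^{\Gamma_y}\pi_Z^{-1}(\pi_Z(z))\xrightarrow{\;\sim\;}\pi_Y^{-1}(\pi_Y(z)).
\]
The morphism $\sigma$ of the statement is the composition of the GIT quotient map $\Gamma\times\pi_Z^{-1}(\pi_Z(z))\to\Gamma\times^{\Gamma_y}\pi_Z^{-1}(\pi_Z(z))$ for the diagonal $\Gamma_y$-action with this isomorphism; hence $\sigma$ is surjective and $\Gamma_y$-invariant, and descends to the claimed isomorphism upon passing to the GIT quotient by $\Gamma_y$.

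The only real obstacle I foresee is ensuring that the form of Luna's theorem we invoke actually delivers a \emph{saturated} image together with a \emph{cartesian} square on quotients, rather than merely an étale map between the two GIT quotients. This stronger output is indeed part of the standard statement under the closed-orbit and reductive-stabilizer hypotheses, both of which are met here; and once saturation and cartesianness are secured the three assertions follow formally from the diagram chase above.
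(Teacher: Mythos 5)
Your proposal is correct and follows the same route as the paper, which gives no argument of its own but simply cites the summary on page~2 of \cite{Drezet-Luna}: the content of that citation is exactly the ``strongly \'etale'' form of Luna's slice theorem (saturated image $U$ of $\Gamma\times^{\Gamma_y}Z\to Y$ plus the cartesian square over the quotients) that you invoke, and your diagram chase is the standard unwinding of it. The only loose phrase is ``pulling it back along $\bar\tau$ first,'' which should just be base change of the cartesian square along the inclusion of the point $\pi_Z(z)\hookrightarrow Z\sslash\Gamma_y$; with that reading every step is sound.
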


\begin{lem} \label{bijective}
Let $H$ be a polarization adjacent to $H_0$,  let $Z^H \subset Z$ be the open subset parametrizing $H$-semistable points. Then, referring to diagram (\ref{diagram}), for every point $z \in Z \sslash G$,  the morphism $\eta$ induces a bijection between $\rho^{-1}(z)$ and $h^{-1}(\epsilon(z))$. 
\end{lem}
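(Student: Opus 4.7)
The plan is to use the \'etale slice properties collected in Lemma \ref{proprieta slice} to identify explicitly both sets $\rho^{-1}(z)$ and $h^{-1}(\epsilon(z))$, and then to exhibit the bijection. First I would fix a point $\tilde z \in Z$ in the closed $G$-orbit over $z$; since the slice construction guarantees that its $\GL(\VV)$-orbit is closed in $\Quot_{H_0}^{ss}$, the sheaf $F_{\tilde z}$ is $H_0$-polystable and represents $\epsilon(z) \in M_{H_0}(v)$. With this identification, $h^{-1}(\epsilon(z))$ consists of isomorphism classes of $H$-polystable sheaves $F'$ with $\gr_{H_0}(F') \cong F_{\tilde z}$, while each $w \in \rho^{-1}(z)$ corresponds to a closed $G$-orbit $G \cdot z_w$ inside $Z^H$ (with respect to the $H$-linearization) satisfying $\pi_Z(z_w) = z$, and the map $\eta$ sends $w$ to the $S_H$-class $[F_{z_w}]$.

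For surjectivity, starting from $[F'] \in h^{-1}(\epsilon(z))$, the sheaf $F'$ is $H_0$-semistable (being $S_{H_0}$-equivalent to the polystable sheaf $F_{\tilde z}$), hence corresponds to some $q' \in \Quot_{H_0}^{ss}$ that is $S_{\GL(\VV)}$-equivalent to $\tilde z$. Applying the isomorphism (\ref{drezet 3}) from Lemma \ref{proprieta slice} to the fiber $\pi_Y^{-1}(\pi_Y(\tilde z))$ produces a point $z' \in Z \cap \GL(\VV) \cdot q'$ lying over $z$ in $Z \sslash G$. Because $F_{z'} \cong F'$ is $H$-semistable, $z'$ lies in $Z^H$, and its image $w \in Z^H \sslash G$ satisfies $\rho(w) = z$ and $\eta(w) = [F']$.

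For injectivity, given $w_1, w_2 \in \rho^{-1}(z)$ with $\eta(w_1) = \eta(w_2) = [F']$, I would pick closed-orbit representatives $z_1, z_2 \in Z^H \cap \pi_Z^{-1}(z)$. They parametrize sheaves isomorphic to $F'$, so they are $\GL(\VV)$-equivalent in $\Quot_{H_0}^{ss}$. The crucial input is again (\ref{drezet 3}): the equality of the images of $(1, z_1)$ and $(g, z_2)$ under the isomorphism $\GL(\VV) \times_G \pi_Z^{-1}(z) \to \pi_Y^{-1}(\pi_Y(\tilde z))$ forces these two pairs to be equivalent under the diagonal $G$-action, hence $g^{-1} \in G$ and $z_1, z_2$ lie in the same $G$-orbit in $Z$ (and therefore in $Z^H$), so $w_1 = w_2$.

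The main technical point I anticipate is the compatibility between the slice structure built for the polarization $H_0$ and the $H$-semistability condition: one must verify that the GIT quotient $Z^H \sslash G$ (with the linearization induced by $H$) has closed $G$-orbits precisely parametrizing the $H$-polystable sheaves inside $\GL(\VV) \cdot Z$, so that $\eta$ is well-defined by the rule $w \mapsto [F_{z_w}]$ and is compatible with $h$ via $\epsilon$. Once this is in place, the bijection on fibers is a formal consequence of Lemma \ref{proprieta slice}.
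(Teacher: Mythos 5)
Your surjectivity argument coincides with the paper's: Lemma \ref{proprieta slice} produces, for the $H$-polystable representative $F'$ of any class in $h^{-1}(\epsilon(z))$, a point of $Z^H\cap\pi_Z^{-1}(z)$ parametrizing it. The problem is the injectivity step. You ``pick closed-orbit representatives $z_1,z_2$'' and assert that they parametrize sheaves isomorphic to $F'$ --- that is, that the distinguished $G$-orbits of $Z^H$ lying over $z$ are exactly those parametrizing $H$-polystable sheaves. You flag this yourself as ``the main technical point,'' but it is not a side compatibility check: it is the entire content of the lemma, and it is left unproved. (Note also that at this stage of the paper $Z^H$ carries no $H$-linearization --- that structure is only built in Section \ref{linear} --- so there is no GIT quotient of $Z^H$ with respect to $H$ to appeal to yet.) Concretely, what must be shown is: if $x,y\in Z^H\cap\pi_Z^{-1}(z)$ parametrize $S_H$-equivalent sheaves, then $\ov{G\cdot x}$ and $\ov{G\cdot y}$ meet inside $Z^H$. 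The obstruction is that the degeneration of $\F_x$ to its $H$-polystable model is produced by a one-parameter subgroup of $\GL(\VV)$ acting on the Quot scheme (via a Jordan--H\"older filtration, as in Lemma 4.4.3 of \cite{Huybrechts-Lehn}), so the limit point a priori lives in $\Quot^{ss}_{H_0}$ and not in the slice $Z$; nothing in your argument brings it back into $Z$, let alone into the $G$-orbit closure of $x$.

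The paper closes this gap in two moves. First, exactly as you do, it reduces to the case where one of the two points, say $y$, parametrizes the polystable sheaf $F'$, and then shows $\GL(\VV)\cdot y\subset \ov{\GL(\VV)\cdot x}\cap Q^H$ (where $Q^H\subset\Quot^{ss}_{H_0}$ is the $H$-semistable locus) by the one-parameter-subgroup degeneration; this uses that the Jordan--H\"older factors of $\F_x$ are globally generated, which is where the twisting normalizations of Corollary \ref{coro} enter. Second, it transfers the orbit-closure relation back to the slice: the restriction of $\sigma$ to $\GL(\VV)\times\ov{G\cdot x}$ is surjective onto $\ov{\GL(\VV)\cdot x}$ (its source is a closed $G$-invariant subset of $\GL(\VV)\times Z$ and $\sigma$ is a quotient map), and since $\sigma$ separates closed $G$-invariant subsets, $\ov{G\cdot x}$ and $\ov{G\cdot y}$ meet in $Z$ if and only if $\ov{\GL(\VV)\cdot x}$ and $\ov{\GL(\VV)\cdot y}$ meet in $\Quot^{ss}_{H_0}$; since they meet in $Q^H$, the $G$-orbit closures meet in $Z^H$. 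Some version of this transfer argument is indispensable, and without it your injectivity proof does not go through.
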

\begin{proof} Recall that the points of $M_H(v)$ correspond to $S_H$-equivalence classes of $H$-semistable sheaves.
Lemma \ref{proprieta slice} tells us  that for every $H$-semistable sheaf $F'$ whose isomorphism class lies in $h^{-1}(\epsilon(z))$, there exists a  point $b \in Z^H$ such that $\F_b \cong F'$. This proves that $\eta:  \rho^{-1}(z) \to h^{-1}(\epsilon(z))$ is surjective.  As for injectivity, we argue as follows.  Let
\[
\pi_Z:  Z\to Z\sslash G
\]
be the quotient map and let $\eta' :Z^H\to M_H(v)$ 
be a map inducing $\eta$.
Let $x$ and $y$ be two points in $Z^H\cap \pi_Z^{-1}(z)$, such that $\eta'(x)=\eta'(y)$.  This means that the two sheaves $\F_x$ and $\F_y$ are $S_H$-equivalent. 
We must prove that $x$ and $y$ are $S$-equivalent in $Z^H$, i.e. that the closure of their $G$--orbits intersect in $Z^H$.
In the $S_H$-equivalence class of  $\F_x$ there is a unique up to isomorphism $H$-polystable sheaf,  which we will denote by $F'$. This sheaf  is  $S_{H_0}$-equivalent to $\F_x$.
Using  Lemma \ref{proprieta slice} again, we then find a point $w \in Z^H\cap \pi_Z^{-1}(z)$ such that $\F_w \cong F'$. Since $x, y$ and $w$ are all mapped to the same point under $\eta'$, it is not restrictive to assume that $y=w$. Set $Q=\Quot^{ss}_{H_0}$ and let $\pi_Q:Q \to Q  \sslash \GL(\VV)$ be the quotient morphism. Let $Q^H \subset Q$ be the open subset parametrizing $H$--semistable sheaves. By construction, the orbit $\GL(\VV) \cdot y$ is contained in the closure of $\GL(\VV) \cdot x$. Moreover, by considering a Jordan--H\"older filtration of $\F_x$ with respect to $H$, we can proceed as in Lemma 4.4.3 of \cite{Huybrechts-Lehn} and find a one--parameter subgroup of $\GL(\VV)$ that converges to a point in the orbit of $y$. To achieve this, we only have to notice that the sheaves of the Jordan--H\"older filtration of $\F_x$ are $H$-semistable, hence $H_0$--semistable of same reduced Hilbert polynomial as $\F_x$. In particular, we can assume that they are globally generated. This also shows that the orbit $\GL(\VV) \cdot y$ is contained in $\ov{\GL(\VV) \cdot x} \cap Q^H$.

Now look at (\ref{drezet 2}), with $\Gamma=\GL(\VV)$,  $Y=Q$, and $Z$ equal to the slice at the point $q_0$. The morphism $\sigma$ restricts to a dominant morphism
\[
\GL(\VV) \times \ov {G \cdot x} \to \ov{ \GL(\VV) \cdot x} \subset \pi_Q^{-1}(\epsilon(z)).
\]
This morphism is  surjective since, in fact, $\GL(\VV) \times \ov {G \cdot x} \subset \GL(\VV) \times Z$ is a closed $G$--invariant subset so its image under the quotient morphism $\sigma$ is closed. Since $\sigma$ separates $G$--invariant closed subsets,  $\ov{G \cdot  y}$ and $\ov{G \cdot x}$
intersect in $Z$ if and only if $\ov{\GL(\VV) \cdot y}$ and $\ov{\GL(\VV) \cdot  x}$ intersect in $Q$. On the other hand, if $\ov{\GL(\VV) \cdot y}$ and $\ov{\GL(\VV) \cdot x}$ intersect in $Q^H$, then $\ov{G \cdot  y}$ and $\ov{G \cdot x}$ have to intersect in $Z^H$ and hence the lemma is proved.
\end{proof}

\begin{proof}[Proof of Proposition \ref{MH e ZH}]
Since $Z \sslash G \to M_{H_0}(v)$ is \'etale, so is the induced morphism 
\[
M_H(v) \times_{M_{H_0}(v)} Z \sslash G \to M_H(v).
\]
 Since $M_H(v)$ is smooth,  $M_H(v) \times_{M_{H_0}(v)} Z \sslash G$ is also smooth. It is therefore enough to check that the natural morphism
\[
Z^H \sslash G \to M_H(v) \times_{M_{H_0}(v)} Z \sslash G
\]
is finite and birational. By Lemma \ref{bijective}, this morphism is bijective, and it is an isomorphism on the locus parametrizing  $H_0$--stable sheaves.
\end{proof}

By Proposition \ref{Q-is-a-cone} and (\ref{mu e k2}), there is a $G$-equivariant local analytic isomorphism
\[
\varphi:(Z,q_0) \cong (\mu^{-1}(0), 0),
\]
which yields  $G$--invariant open analytic neighborhoods
\[
\mc U \subset Z, \quad \text{ and } \quad \mc V \subset  \mu^{-1}(0),
\]
of the points $q_0$ and $0$ respectively such that
\[
\varphi: \mc U  \cong \mc V,
\]
$G$--equivariantly.

\begin{prop} \label{saturation}
Up to restricting  $\U$ and $\V$, if necessary, we can assume that the following properties hold:
\begin{enumerate}
\item The two open subsets $\U$ and $\V$ are saturated neighborhoods of $q_0$ in $Z$, and of $0$ in $\mu^{-1}(0)$, respectively. 

\item Set $\U^H=\U \cap Z^H$. The natural morphisms of analytic spaces $\U \sslash G \to Z \sslash G$ and $\U^H \sslash G \to Z^H \sslash G$ are open immersions, and together with the morphisms $Z^H \sslash G \to Z \sslash G$ and $\U^H \sslash G \to U \sslash G$, they form a cartesian diagram.

\item The space $\U \sslash G$ maps isomorphically onto its image under the \'etale map $Z \sslash G \to M_{H_0}(v)$ (and the same holds for $\U^H \sslash G$ under  $Z^H \sslash G \to M_{H}(v)$).

\end{enumerate}
\end{prop}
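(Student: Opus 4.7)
My plan is to start from the $G$-equivariant local analytic isomorphism $\varphi: (\U_0, q_0) \to (\V_0, 0)$ produced by Proposition \ref{Q-is-a-cone} on some $G$-invariant open neighborhoods $\U_0 \subset Z$ and $\V_0 \subset \mu_{\mathbf n}^{-1}(0)$, and to shrink these to saturated neighborhoods. The key preliminary observation is that, since $F$ is $H_0$-polystable, the orbit $G \cdot q_0$ is closed in $Z$, so the fiber $\pi_Z^{-1}(\pi_Z(q_0))$ equals $G \cdot q_0$ and is automatically contained in any $G$-invariant open neighborhood of $q_0$.

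To produce the saturated neighborhood $\U$ of part (1), I would use that $Z \setminus \U_0$ is $G$-invariant closed and $\pi_Z: Z \to Z \sslash G$ is a good quotient, so $\pi_Z(Z \setminus \U_0)$ is closed in $Z \sslash G$ and misses $\pi_Z(q_0)$ by the preceding observation. Choosing an open neighborhood $W$ of $\pi_Z(q_0)$ in $Z \sslash G$ disjoint from $\pi_Z(Z \setminus \U_0)$ and setting $\U := \pi_Z^{-1}(W)$, I obtain a saturated, $G$-invariant open neighborhood with $\U \subset \U_0$ and $\U \sslash G = W$ an open subspace of $Z \sslash G$. Descending $\varphi$ to the isomorphism $\bar\varphi: \U_0 \sslash G \to \V_0 \sslash G$ and setting $\V := \pi_\mu^{-1}(\bar\varphi(W))$ yields the corresponding saturated neighborhood on the quiver side, with $\varphi|_\U: \U \cong \V$ still a $G$-equivariant isomorphism.

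For the cartesian statement (2), I would first check that $\U^H := \U \cap Z^H$ is saturated inside $Z^H$: if $y \in Z^H$ satisfies $\pi_{Z^H}(y) = \pi_{Z^H}(x)$ for some $x \in \U^H$, then the orbit closures of $x$ and $y$ meet in $Z^H$, hence a fortiori in $Z$, so $\pi_Z(y) = \pi_Z(x) \in W$ and saturation of $\U$ in $Z$ forces $y \in \U^H$. This makes $\U^H \sslash G \to Z^H \sslash G$ an open immersion. To get the cartesian diagram I would construct the natural map $\U^H \sslash G \to \U \sslash G \times_{Z \sslash G} Z^H \sslash G$ and verify bijectivity: surjectivity is obtained by representing any pair $(\alpha, \beta)$ in the fiber product by a point $y \in Z^H$ whose image in $Z \sslash G$ lies in $W$, which by saturation already lies in $\U^H$; for injectivity I would show that if two points of $\U^H$ have $G$-orbits meeting both in $\U$ and in $Z^H$, then they share the unique closed $H$-polystable orbit $G \cdot q$ in $Z^H$, and that $q$ lies in $\U$ by saturation, so that $G \cdot q \subset \U^H$ and the orbits of the two points already meet in $\U^H$.

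Finally, for (3), I would shrink $W$ once more so that the étale morphism $\epsilon: Z \sslash G \to M_{H_0}(v)$ of (\ref{etale morphism}) restricts to an isomorphism of analytic spaces onto its image on $W$; this is possible because étale morphisms are local analytic isomorphisms. Combining this with the cartesian square of Proposition \ref{MH e ZH}, which is preserved under base change along the open immersion $W \hookrightarrow Z \sslash G$ by (2), yields the analogous statement for $\U^H \sslash G \to M_H(v)$. The step that I expect to demand the most care is the injectivity claim inside (2): one must keep track of the distinct $S$-equivalence relations living in $Z$, $Z^H$, $\U$ and $\U^H$, and repeatedly invoke saturation to ensure that the unique closed orbit witnessing each relation actually sits inside $\U^H$.
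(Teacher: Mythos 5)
Your overall strategy matches the paper's, and your construction of the saturated $\U$ as $\pi_Z^{-1}(W)$ for a small open $W\subset Z\sslash G$ avoiding the closed set $\pi_Z(Z\setminus \U_0)$ is actually cleaner than the paper's stratum-by-stratum removal of the sets $\pi_Z^{-1}\pi_Z(\ov P_\tau^c)$; your verification of (2) is also more explicit than the paper's one-line reference back to Proposition \ref{MH e ZH}. Two points need repair, however. First, your ``key preliminary observation'' is false as stated: the fiber $\pi_Z^{-1}(\pi_Z(q_0))$ of the GIT quotient is \emph{not} the closed orbit $G\cdot q_0$ but the union of all orbits containing $G\cdot q_0$ in their closure (the points of $Z$ parametrizing semistable sheaves $S_{H_0}$-equivalent to $F$), which is strictly larger precisely when $[F]$ is a singular point. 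The conclusion you need -- that this fiber is contained in every $G$-invariant open neighborhood of $q_0$ -- is still true, but it requires the argument the paper gives: an invariant open set meeting $\ov{G\cdot y}$ meets the dense orbit $G\cdot y$ and hence contains it, so it contains every orbit whose closure contains $G\cdot q_0$.

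Second, and more seriously, your construction of $\V$ has a genuine gap. The set $\bar\varphi(W)$ lives in $\V_0\sslash G$, which is not naturally an open subset of $\mu^{-1}(0)\sslash G$ because $\V_0$ is not known to be saturated; and once $\pi_\mu^{-1}(\bar\varphi(W))$ is made meaningful it contains $\varphi(\U)$ but may be strictly larger, picking up points of $\mu^{-1}(0)$ outside $\V_0$ whose orbit closures meet the relevant closed orbits. So either $\V\neq\varphi(\U)$, breaking your claim that $\varphi|_{\U}\colon \U\cong\V$, or, if you insist on $\V=\varphi(\U)$, its saturation in $\mu^{-1}(0)$ is unproven: $\varphi$ is only defined on $\U_0$ and carries no information about orbit closures taken in all of $\mu^{-1}(0)$, and an orbit closed in the open set $\V_0$ need not be closed in $\mu^{-1}(0)$. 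This is exactly the issue the paper flags (``Since, a priori, $\V$ could be unsaturated, we can do the same trick for $\V$\dots''): one must shrink $\V$ a second time to saturate it, pull the shrinking back to $\U$, and check that this does not destroy the saturation of $\U$. That last check works because for $z,w\in\U$ in the same $\pi_Z$-fibre the common closed orbit lies in $\U$ by saturation, whence $\varphi(z)$ and $\varphi(w)$ lie in the same $\pi_\mu$-fibre, so one only ever removes from $\U$ unions of full fibres. Your write-up is missing this back-and-forth step entirely.
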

\begin{proof}
We start with the first property. By definition, to say that $\U$ is saturated is equivalent to saying that $\pi_Z^{-1}\pi_Z(\U)=\U$.
Since $q_0 \in \U$ has closed orbit, the open subset $\U$ intersects, and hence contains, all the $G$--orbits of $\pi_Z^{-1}(\pi_Z(q_0))$ (which is the union of all orbits that contain $G \cdot q_0$ in their closure). 
The same argument applies to any point in $\U$ corresponding to polystable sheaf (since their orbits are closed), so we only have to worry about the polystable sheaves  \emph{not} contained in $\U$. 
Let $Z_\tau$ be the stratum of $Z$ parametrizing sheaves of  a given type $\tau$ (cf. (\ref{type of poly sheaf})), and let $P_\tau \subset Z_\tau$ be the locally closed $G$--invariant subset parametrizing polystable sheaves of type $\tau$. Finally, let $P_\tau^c $ be the intersection of $P_\tau$ with the complement of $\U$. Its closure (in the usual topology) $\ov P_\tau^c$ is a $G$--invariant closed subset and therefore
\[
\pi_Z(q_0) \cap \pi_Z(\ov P_\tau^c) =\emptyset.
\]
We can therefore safely remove the closed subset $\pi_Z^{-1} \pi_Z(\ov P_\tau^c)$ from $\U$ without interfering with $\pi_Z^{-1}(\pi_Z(q_0))$. Since the set of possuble strata of polystable sheaves of $Z$ is finite, we can preform this operation until we get rid of all the points of $\U$ parametrizing sheaves whose  $S_{H_0}$--equivalence class is not entirely contained in $\U$. Then we restrict $\V$ correspondingly. Since, a priori, $\V$ could be unsaturated, we can do the same trick for $\V$, and we conclude noticing that this operation does not affect the saturation of $\U$.
As for the second part, we only have to notice that since $\U$ is saturated in $Z$ (and $\U^H$ is saturated in $Z^H$) the analytic space $\U \sslash G$ is an open subset of $Z \sslash G$ (and analogously for the restriction to the locus of $H$--stable sheaves). The statement about the cartesian diagram can be proved exactly as in Proposition \ref{MH e ZH}.
The third statement is immediate.
\end{proof}

Let $\U$ and $\V$ be as in Proposition \ref{saturation} and set
\[
\V^H=\varphi (\U) \subset \mu^{-1}(0).
\]

Consider the following commutative diagram
\be\label{big-com-diag}
\xymatrix{
Z &  \ar@{_{(}->}[l] \U \ar[r]^\sim & \V \ar@{^{(}->}[r] & \mu^{-1}(0) \\
Z^H \ar@{^{(}->}[u]  \ar[d] & \U^H \ar[d] \ar@{^{(}->}[u] \ar@{_{(}->}[l] \ar[r]^\sim & \V^H \ar[d] \ar@{^{(}->}[u] &  \\ 
Z^H \sslash G \ar[d] & \ar[d] \ar@{_{(}->}[l] \U^H \sslash G \ar[r]^\sim &  \ar[d]\V^H \sslash G  \ar@{^{(}->}[r] & \ar[d] \textbf{X}  \\
Z \sslash G &  \ar@{_{(}->}[l] \U \sslash G \ar[r]^\sim & \V \sslash G \ar@{^{(}->}[r]  & \mu^{-1}(0)  \sslash G
}
\ee
In order to prove Part ii) of the Theorem, we need to understand what to place in lieu of the ``$\textbf{X}$''.

In Section \ref{character} we will find a character $\chi$, depending on $H$, such that we can set $\textbf{X}=\mu^{-1}(0) \sslash_\chi G$. In the next two sections, we will develop some necessary tools for this aim.

These sections will develop in the following setting.

Let $H_0$ be a polarization on $S$ and  consider an $H_0$--polystable sheaf  $F=\oplus_{i=1}^sF_i\otimes V_i$, where the $F_i$'s are mutually distinct $H_0$--stable sheaves. Let
\[
v=(0, D, \chi)
\]
be its Mukai vector and consider
\[
Z \subset \Quot_{H_0}
\]
an \'etale slice passing through a point $q_0$ corresponding to $F$, as in (\ref{Q slice2}).  A point $q \in  \Quot_{H_0}$ will correspond to a surjection,
\[
q: \mc O_S\otimes {H^0(F(m))} \to F(m H_0),
\]
for some chosen large $m$.  Which $m$ to choose and the fact that we can make such a choice will be discussed in the next section.

\subsection{Remarks on stability criteria} \label{remarks stability criteria}

Following Section 4.4 of \cite{Huybrechts-Lehn} we will need the following result by Le Potier, which we state in the setting of pure dimension one sheaves.

\begin{theorem}[Le Potier, Theorem 4.4.1 of \cite{Huybrechts-Lehn}]  \label{lepotier} Set $v=(0, D, \chi) $. There exists a positive integer  $m_0$ such that 
for every $m \ge m_0$ the following are equivalent
\begin{enumerate}
\item $\mc G$ is an $H_0$--semistable sheaf with Mukai vector $v$;
\item For $m \ge m_0$, $\chi(\mc G(m H_0)) \le h^0(\mc G(m H_0)$, and for any sub-sheaf $\mc G' \subset \mc G$, setting $D'=c_1(\mc G')$, we have
\be \label{sono stanca}
\ff{h^0(\mc G'(m)}{D'\cdot H_0} \le \ff{h^0(\mc G(m)}{D\cdot H_0} .
\ee
\end{enumerate}
Moreover, equality in (\ref{sono stanca}) holds if and only if $\mc G'$ makes $\mc G$ strictly $H_0$--semistable.
\end{theorem}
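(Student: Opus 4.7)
The plan is to follow Le Potier's classical strategy, as presented in \cite{Huybrechts-Lehn} Section 4.4, specialized to the case of pure dimension one sheaves on the K3 surface $S$; the argument rests on two boundedness assertions together with a Riemann--Roch computation. First I would establish boundedness. The family $\mc F_1$ of $H_0$-semistable sheaves with Mukai vector $v=(0,D,\chi)$ is bounded since such sheaves are all supported on curves in the fixed linear system $|D|$ and share a common Hilbert polynomial. Then, by Grothendieck's boundedness lemma (\cite{Huybrechts-Lehn}, Lemma 1.7.9), the family $\mc F_2$ of \emph{saturated} sub-sheaves $\mc G'\subset \mc G$ with $\mu_{H_0}(\mc G')\le \mu_{H_0}(\mc G)$, as $\mc G$ varies over $\mc F_1$, is also bounded. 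One then picks $m_0$ large enough that for every $m\ge m_0$ and every sheaf $\mc F$ in $\mc F_1\cup \mc F_2$, one has $H^1(\mc F(mH_0))=0$. Since $\mc F$ is pure of dimension one, $H^2(\mc F(mH_0))=0$ as well, and Riemann--Roch yields
\[
h^0(\mc F(mH_0))=\chi(\mc F(mH_0))=(c_1(\mc F)\cdot H_0)\,m+\chi(\mc F),
\]
so that $h^0(\mc F(mH_0))/(c_1(\mc F)\cdot H_0)$ equals the reduced Hilbert polynomial $m+\mu_{H_0}(\mc F)$.

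With this setup in place, both implications become essentially formal. For (1) $\Rightarrow$ (2), any $\mc G\in \mc F_1$ satisfies $h^0(\mc G(mH_0))=\chi(\mc G(mH_0))$ for $m\ge m_0$, giving the first inequality as an equality. Given a sub-sheaf $\mc G'\subset \mc G$, replacing it by its saturation only increases $\chi(\mc G')$ (and hence $h^0(\mc G'(mH_0))$) without changing $D'=c_1(\mc G')$, so one may assume $\mc G'\in \mc F_2$, and the inequality (\ref{sono stanca}) reduces to the slope inequality $\mu_{H_0}(\mc G')\le \mu_{H_0}(\mc G)$, which holds by semistability. Conversely, for (2) $\Rightarrow$ (1): since $\mc G\in \mc F_1$ we again have $h^0(\mc G(mH_0))=\chi(\mc G(mH_0))$, while for an arbitrary sub-sheaf $\mc G'$ the identity $h^2=0$ gives $h^0(\mc G'(mH_0))\ge \chi(\mc G'(mH_0))=d'm+\chi'$. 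Combined with the hypothesis, this yields
\[
m+\frac{\chi'}{d'}\le \frac{h^0(\mc G'(mH_0))}{d'}\le \frac{h^0(\mc G(mH_0))}{d}=m+\frac{\chi}{d},
\]
i.e., $\mu_{H_0}(\mc G')\le \mu_{H_0}(\mc G)$, which is precisely $H_0$-semistability.

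The equality statement is obtained by running the same chain of inequalities backwards: equality in (\ref{sono stanca}) for some proper sub-sheaf $\mc G'$ is equivalent to $\mu_{H_0}(\mc G')=\mu_{H_0}(\mc G)$, i.e., to $\mc G'$ having the same reduced Hilbert polynomial as $\mc G$, which is the definition of $\mc G'$ making $\mc G$ strictly semistable. The main obstacle in the argument is the uniform choice of $m_0$: while boundedness of $\mc F_1$ is nearly immediate (supports lie in the fixed linear system $|D|$), the boundedness of $\mc F_2$ is the nontrivial ingredient, and is exactly where Grothendieck's lemma is invoked. Everything else reduces to a Riemann--Roch computation made possible by purity, which ensures $h^2=0$ without any assumption on $m$.
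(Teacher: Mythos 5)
The paper does not actually prove this statement: it is quoted (in the pure dimension one setting) from Le Potier's theorem, Theorem 4.4.1 of \cite{Huybrechts-Lehn}, so your attempt has to be measured against that proof. Your reduction of the theorem to ``boundedness plus Riemann--Roch'' trivializes it, and both implications, as you have written them, have gaps.

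The more serious one is in $(2)\Rightarrow(1)$: you write ``since $\mc G\in\mc F_1$ we again have $h^0(\mc G(mH_0))=\chi(\mc G(mH_0))$'', but $\mc F_1$ is your family of \emph{semistable} sheaves, and semistability of $\mc G$ is exactly what you are trying to prove. The family of all pure dimension one sheaves with Mukai vector $v$ is unbounded (take $\mc O_C(-N)\oplus\mc O_{C'}(M)$ on a reducible member of $|D|$ with $N\to\infty$ and $\chi$ fixed), so no uniform $m_0$ forces $h^1(\mc G(mH_0))=0$ for every candidate $\mc G$; hypothesis $(2)$ only gives $\chi(\mc G(m))\le h^0(\mc G(m))$, and without the reverse inequality your chain of inequalities does not close to yield $\mu_{H_0}(\mc G')\le\mu_{H_0}(\mc G)$. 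A second gap sits in $(1)\Rightarrow(2)$: Grothendieck's lemma bounds the saturated subsheaves whose slope is bounded \emph{below}, not above, so your family $\mc F_2$ is not bounded either --- a semistable $\mc G$ can have saturated subsheaves of arbitrarily negative slope (e.g.\ saturated line subbundles of $\mc O_C^{\oplus 2}$ obtained as kernels of surjections onto line bundles of large degree), and for these $h^1(\mc G'(mH_0))\neq 0$ no matter how large $m$ is, so the identification $h^0=\chi$ that your proof of (\ref{sono stanca}) rests on fails for them. In both places the missing ingredient is the Le Potier--Simpson estimate (Corollary 3.3.1 of \cite{Huybrechts-Lehn}), which bounds $h^0(\mc G'(m))$ for an \emph{arbitrary} pure sheaf in terms of $\hat\mu_{\max}$ and the multiplicity, with no boundedness hypothesis; applied to the Harder--Narasimhan factors of arbitrary subsheaves in one direction and to a destabilizing quotient in the other, it is what makes a uniform $m_0$ possible and is the real content behind the theorem. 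Your treatment of the equality case and the reduction to saturated subsheaves are fine once those estimates are in place.
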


In order to use the Theorem above, we need to make sure that we can twist our sheaves by a large multiple of $H_0$, without affecting the problem we are set to study. Let us be more precise.

First of all, recall that $H_0$--semistability is preserved under tensoring by $H_0$. It follows that for any $m \in \ZZ$ we have a natural isomorphism
\[
M_{H_0}(v) \to M_{H_0}(v_m), \quad \text{ where } v_m:=(0, D, \chi+m (D\cdot H_0)).
\]
From the point of view of studying the singularity of $M_{H_0}(v)$, locally around a polystable sheaf $F$, we can consider without loss of generality the moduli space $M_{H_0}(v_m)$, locally around $F \otimes \mc O_S(m H_0)$.
Moreover, one can easily check using the equation of the walls given in Proposition \ref{walls and strata} that there is a bijection between $v$--walls passing through $H_0$ and the $v_m$--walls passing through $H_0$.

However, we also need to understand what happens to the resolution $h: M_{H}(v) \to M_{H_0}(v)$ as we tensor by $\mc O_S(m H_0)$.

\begin{lem}\label{H to H'} Let $H$ be a polarization adjacent to $H_0$, and set
\[
H':=\left\{ \begin{array}{ll} H & \text{if } \chi >0 \\
t H_0 -H & \text{if } \chi < 0
\end{array} \right.
\]
so that for $t \gg 0$, $H'$ is ample and adjacent to $H_0$. 
For $m \gg 0$ there is a commutative diagram
\[
\xymatrix{
M_{H}(v) \ar[d]_h \ar[r]^{\otimes m H_0}  & M_{H'}(v_m) \ar[d]^{h_m}\\
M_{H_0}(v) \ar[r]^{\otimes m H_0}   &  M_{H_0}(v_m)
}
\]
where the horizontal morphisms are isomorphism induced by tensoring by $\mc O_S(m H_0)$ and the vertical morphisms are the usual morphisms given by Proposition \ref{walls and strata}.
\end{lem}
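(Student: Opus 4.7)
The plan is to verify that both horizontal maps in the diagram are well-defined isomorphisms and then check commutativity of the square. The bottom row is straightforward: for every pure dimension one sheaf $G$ one has $\mu_{H_0}(G \otimes \mc O_S(H_0)) = \mu_{H_0}(G) + 1$, so the tensor operation shifts all $\mu_{H_0}$-slopes by the same constant and therefore preserves $H_0$-(semi)stability, Jordan--H\"older filtrations, and $S_{H_0}$-equivalence. Hence the bottom horizontal arrow is an isomorphism for every $m \geq 0$, and it respects the graded-pieces functor that defines $h$ and $h_m$.

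For the top row, the delicate point is to compare $H$-(semi)stability of $F$ against every subsheaf $G \subset F$ with the $H'$-(semi)stability of the twisted pair $G(mH_0) \subset F(mH_0)$. Setting $a = D\cdot H$, $a' = D'\cdot H$, $e = D\cdot H_0$, $e' = D'\cdot H_0$, and the analogous quantities for $H'$, a direct expansion of $\mu_{H'}(F(mH_0)) - \mu_{H'}(G(mH_0))$ decomposes into three contributions: the $H'$-slope gap $S = a'\chi(F) - a\chi(G)$ of the untwisted sheaves, the $H_0$-slope gap $S_0 = e'\chi(F) - e\chi(G)$, and a determinantal term linear in $m$ built from $a'e - ae'$. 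Le Potier's Theorem~\ref{lepotier} implies that the set of potentially destabilizing $(D', \chi(G))$ forms a bounded, hence finite, family, so it suffices to verify sign conditions for finitely many test objects and choose $m$ large enough to uniformize the resulting inequalities.

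The choice of $H'$ is made precisely so that the linear-in-$m$ term carries the correct sign, and the dichotomy on $\chi$ reflects the sign of the determinantal coefficient. When $\chi > 0$ the inequalities $S \geq 0$ from $H$-semistability and $S_0 \geq 0$ from $H_0$-semistability (every $H$-semistable sheaf is $H_0$-semistable since $H$ is adjacent to $H_0$) combine with the favourable sign of the determinant to make $H' = H$ work for $m \gg 0$. When $\chi < 0$ the determinantal sign reverses, and the substitution $H' = tH_0 - H$ introduces an additional $t \cdot S_0$ summand that, for $t \gg 0$ (which also ensures that $H'$ is ample and adjacent to $H_0$), dominates and restores the correct sign. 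The main obstacle is the case analysis itself: subsheaves $G$ that are $H_0$-Jordan--H\"older equivalent to $F$ (so $S_0 = 0$) must be treated separately from the strictly $H_0$-unstable ones, and the intertwined signs of $S$ and of $a'e - ae'$ are exactly what force the asymmetric formula for $H'$. Once both horizontal maps are established as isomorphisms, commutativity of the square is immediate since both $h$ and $h_m$ send a sheaf to its $H_0$-Jordan--H\"older graded, an operation which commutes with tensoring by a line bundle.
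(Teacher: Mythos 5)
Your overall strategy coincides with the paper's: the bottom arrow and the commutativity of the square are immediate, and the entire content is the regularity of the top arrow, which you attack --- exactly as the paper does --- by expanding $\mu_{H'}(\mc G(mH_0))-\mu_{H'}(\mc G'(mH_0))$ for a subsheaf $\mc G'\subset\mc G$ into a constant term plus a term linear in $m$ (and, for $H'=tH_0-H$, a term linear in $t$), splitting according to whether the $H_0$-slope inequality for $\mc G'$ is an equality or is strict, and letting the sign of $\chi$ dictate the choice of $H'$.

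The sign analysis, however, has a gap precisely at the point where it needs to be sharpest. The coefficient $a'e-ae'$ of the $m$-linear term is \emph{not} sign-controlled by the inequalities $S\ge 0$ and $S_0\ge 0$: for a subsheaf with $S_0>0$ nothing prevents $a'e-ae'<0$ even when $\chi>0$, so the assertion that ``the favourable sign of the determinant'' makes $H'=H$ work is unsubstantiated exactly in the subcase you set aside. The identity that controls the sign is available only when $S_0=0$: there $\chi(\mc G')=\chi e'/e$, hence $a'e-ae'=\frac{e}{\chi}\,S$, and the whole slope difference factors as $\delta(H)\bigl(1+m\,\frac{D\cdot H_0}{\chi}\bigr)$ with $\delta(H)=\frac{\chi}{D\cdot H}-\frac{\chi(\mc G')}{\Gamma\cdot H}\ge 0$; this is the computation the paper actually carries out, and the sign of $1+m\frac{D\cdot H_0}{\chi}$ for $m\gg 0$ is what forces the dichotomy on $\chi$ and the replacement $H\mapsto tH_0-H$. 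For the complementary case $S_0>0$ the paper does not use the $m$-expansion at all: it argues that a strict $H_0$-slope inequality persists for any polarization adjacent to $H_0$. Your alternative --- letting the summand $tS_0$ dominate --- does not work as written, because the quantity it must dominate, $m(a'e-ae')$, grows with $m$, so $t$ would have to be chosen \emph{after} (and depending on) $m$; this contradicts the quantifier order of the statement, in which $H'$, hence $t$, is fixed first and only then is $m$ taken large. To close the argument you need either the paper's adjacency argument for the strict case or a uniform control of $m(a'e-ae')$ over the destabilizing family, which the boundedness remark you extract from Theorem \ref{lepotier} does not by itself provide.
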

\begin{proof}
We only have to check that the top arrow defines a regular morphism.
So let $\mc G$ be any $H$--semistable sheaf with $v(\mc G)=v$ and let $\mc G' \subset \mc G$ be a sub-sheaf and set $\Gamma=c_1( \mc G')$. Since $\mc G$ is also $H_0$--semistable we may conclude that $\mc G (m H_0)$ is also $H_0$--semistable. There are two case. Either $\mu_{H_0} (\mc G'(m H_0) ) < \mu_{H_0} (\mc G(m H_0))$, in which case the inequality stays true also for $H'$ since this polarization is adjacent to $H_0$, or else
\be \label{mu zero}
\mu_{H_0} (\mc G'(m H_0) ) = \mu_{H_0} (\mc G(m H_0)).
\ee
To handle this case, we first introduce some notation. 
For any $0 \neq L \in \Pic(S)$, set
\[
\delta(L):=\ff{\chi}{L \cdot D} - \ff{\chi(\mc G')}{  \Gamma \cdot L},
\]
so that $\mc G$ is $L$--semistable if and only if $\delta(L)>0$.
Notice that $\delta(L)=-\delta(-L)$.
Using (\ref{mu zero}) we can see that
\[
\ff{H_0 \cdot D }{L \cdot D} - \ff{H_0 \cdot \Gamma}{ L \cdot \Gamma}=\ff{H_0 \cdot D}{\chi} \delta(H)
\]
Hence,
\[
\mu_L(\mc G(m H_0) )-\mu_L(\mc G'(m H_0) )=\underbrace{ \ff{\chi}{L \cdot D} - \ff{\chi(\mc G')}{  \Gamma \cdot L}+m \left[\ff{H_0 \cdot D }{L \cdot D} - \ff{H_0 \cdot \Gamma}{ \Gamma \cdot L}\right]}_{(*)}=\delta(L)+m \ff{H_0 \cdot D}{\chi} \delta(L).
\]
If $\mc G$ is $H$--semistable, then $\delta(H)>0$. So if $\chi >0$ we may conclude that $\mc G(m H_0)$ is also $H$--semistable.

On the other hand, if $\chi<0$ and $m$ is large enough, then $\delta(H)+m \ff{H_0 \cdot D}{\chi} \delta(H)<0$, implying that $\mc G(m H_0)$ is not $H$--semistable. Now set
\[
\Delta(L):=(L\cdot D) (L \cdot \Gamma) (*).
\]
Notice that $\Delta( \cdot)$ is a linear function of its argument,  that $\Delta( H_0)=0$, and that $\Delta( H)<0$. It follows that for any $t$,
\[
\Delta (tH_0 -H) >0,
\]
and hence for $t \gg0$ so that $H'=tH_0 -H$ is ample and adjacent to $H_0$, the sheaf $\mc G(m H_0)$ is $H'$--semistable.
\end{proof}

Using this and Theorem \ref{lepotier} we hence get,

\begin{cor} \label{coro}
Up to twisting by a sufficiently high multiple of $H_0$ (and hence replacing $F$ and $v$ appropriately) we can assume in Theorem \ref{main} that :
\begin{itemize}
\item For any $H_0$--semistable sheaf $\mc G$ with Mukai vector $v$, and any sub-sheaf $\mc G' \subset \mc G$ with $\mu_{H_0}(\mc G')=\mu_{H_0}(\mc G)$, we have $H^i(\mc G)=H^i(\mc G')=0$, for $i>0$;
\item For any sub-sheaf $\mc G' \subset \mc G$ we have
\[
\ff{h^0(\mc G')}{D'\cdot H_0} \le \ff{h^0(\mc G)}{D\cdot H_0}.
\]
Moreover, equality holds  if and only if $\mc G'$ makes $\mc G$ strictly $H_0$--semistable.
\end{itemize}
\end{cor}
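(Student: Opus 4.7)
The plan is to combine Lemma \ref{H to H'}, which guarantees that twisting by $\mc O_S(mH_0)$ preserves the setup of Theorem \ref{main}, with Le Potier's criterion (Theorem \ref{lepotier}) and boundedness of moduli of semistable sheaves. By twisting far enough, the higher cohomology of every relevant sheaf vanishes uniformly, and then the content of the corollary is exactly what Le Potier's criterion reads off.

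First I would invoke Lemma \ref{H to H'} to replace $(F, v, H)$ by $(F \otimes \mc O_S(mH_0), v_m, H')$: this preserves the local analytic structure at $[F]$, the attached quiver, the automorphism group $G$, the Kuranishi cup-product, and the resolution $h$, so $m$ may be chosen as large as necessary. Next, boundedness of the family of $H_0$-semistable sheaves with Mukai vector $v$ yields an integer $m_1$ such that $H^i(\mc G(mH_0)) = 0$ for $i > 0$, $m \geq m_1$, and every such $\mc G$. For an equal-slope sub-sheaf $\mc G' \subset \mc G$, I would first observe that $\mc G'$ is automatically $H_0$-semistable and pure of dimension one (any $\mc G'' \subset \mc G'$ satisfies $\mu_{H_0}(\mc G'') \leq \mu_{H_0}(\mc G) = \mu_{H_0}(\mc G')$), and that its Mukai vector $v' = (0, D', \chi')$ is pinned down by the numerical class $[D']$ together with the slope equation $\chi' = \mu_{H_0}(\mc G)\,(D' \cdot H_0)$. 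Since $\NS(S)$ is finitely generated and both $[D']$ and $[D]-[D']$ are effective, only finitely many such $v'$ arise; boundedness of each $M_{H_0}(v')$ then lets me enlarge $m_1$ so that the vanishing holds uniformly over all $\mc G'$ as well. Taking $m \geq \max(m_0, m_1)$, with $m_0$ from Theorem \ref{lepotier}, delivers the first bullet.

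The second bullet is then a direct translation of Theorem \ref{lepotier} once these vanishings are in force: $\chi(\mc G(mH_0)) = h^0(\mc G(mH_0))$ and similarly for $\mc G'$, so Le Potier's inequality and its equality-iff-strictly-semistable characterization transcribe verbatim as the claim. The step I expect to be the main obstacle is the uniform finiteness of the set of Mukai vectors $v'$ of equal-slope sub-sheaves; once that is secured by the argument above, everything else is a routine packaging of boundedness and Le Potier.
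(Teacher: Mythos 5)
Your proposal is correct and follows essentially the route the paper intends: the paper offers no separate proof, simply deducing the corollary from Lemma \ref{H to H'} together with Le Potier's criterion (Theorem \ref{lepotier}), which is exactly your skeleton. The extra details you supply --- that equal-slope sub-sheaves are themselves $H_0$-semistable, that their Mukai vectors range over a finite set because $[D']$ and $[D]-[D']$ are both effective, and hence that boundedness gives a uniform twist --- are precisely the routine packaging the paper leaves implicit, and they check out.
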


We henceforth assume that the conclusions of the corollary are satisfied, and since we are free to replace $F(m H_0)$ by $F$, we set
\[
\VV:=H^0(S,F),
\]
so that $\Quot_{H_0}$, which parametrizes quotients of type $\VV \otimes \mc O_S \to F$, is acted on by $GL(\VV)$.

\subsection{Remarks on linearizations}\label{linear}   The main result of this section is Proposition \ref{from H to L}. There we prove that there is a natural linearized line bundle on $Z$ such that the locus $Z^H\subset Z$
of points $z\in Z$ for which $\F_z$ is $H$-semistable is contained in the locus of semistable points with respect to this line bundle.
This will be the bridge between $Z$ and the quiver variety $\mu^{-1}(0)$.

We start by recalling the construction and the first properties of the determinant line bundle. For more details we refer the reader to Chapter 8 of \cite{Huybrechts-Lehn}.

Let $\mathcal E$ be a family of sheaves on $S$, parametrized by a scheme $B$, and let 
\[
p: B\times S\longrightarrow B\,,\quad\text{and}\quad q: B\times S\longrightarrow S,
\]
be the two projections. Set $E_B=\E_{| \{b\} \times S}$. The group homomorphism
\[
\begin{aligned}
\lambda_\E: \Pic(S) &\longrightarrow \Pic(B) \\
H & \longmapsto \lambda_\E(H):=\det p_* (\E \otimes q^* H)
\end{aligned}
\]
defines the determinant line bundle with respect to $H$. The construction is functorial on the base, in the sense that it commutes with base change. From our point of view, one important feature of $\lambda_\E(H)$ is that, if $B$ has an action of an algebraic group $G$, then any linearization of the the family $\mc E$ induces a $G$--linearization of $\lambda_\E(H)$. In turn,  this defines, for every $b \in B$, an action of the stabilizer $\Stab_b \subset G$ on the fiber $\lambda_\E(H)_b$.
This action holds an important place in the rest of the section. For example, in the case of $\lambda_\F(\ell H)$, for some ample $H$ and some $\ell \gg 0$ so that $H^i(E_b(\ell H))=0$, for every $b \in B$ and every $i >0$, and $\lambda_\F(\ell H)_b=\det H^0(E_b (\ell H))$, the action can be described as follows:
a $G$--linearization of $\mc E$ defines, for every $b \in B$, a morphism
\be\label{stab to aut}
\Stab_b \to \Aut(E_b)
\ee
which can be composed with the natural morphism
\[
\Aut(E_b) \longrightarrow GL(H^0(E_b(\ell H)) \stackrel{\det}{\longrightarrow} GL(\det H^0(E_b(\ell H))).
\]
The action is then simply given by the natural morphism
\[
\Stab_b \longrightarrow GL (\det H^0(E_b(\ell H)))=\Aut(\lambda_\E(\ell H)_b).
\]

Recall that we are assuming that the conclusions of Corollary \ref{coro} are satisfied.

Set
\[
G:=\Aut(\oplus( F_i \otimes V_i) )=\overset s{\underset{i=1}\prod}\GL(V_i).
\]

Having fixed the point $q_0 \in \Quot_{H_0}$ corresponding to the $H_0$--polystable sheaf
\[
F=\oplus ( F_i\otimes V_i ),
\]
there is an injective morphism  $i_0: G  \to GL(\VV)$, whose image is precisely $\Stab_{q_0}$.
The universal family over $\Quot_{H_0}$ has a natural $GL(\VV)$--linearization (cf. \S 4.3 of \cite{Huybrechts-Lehn})  such that for every $q \in Q$ the morphism (\ref{stab to aut}) is the inverse of the natural isomorphism $\Aut(F_q) \to \Stab_q $.

The restriction $\F$ of the universal family over the Quot scheme to $Z \times S$ is therefore $G$--linearized and hence, for every $H \in \Amp(S)$ and every $\ell \in \ZZ$, so is the determinant line bundle $\lambda_\F(\ell H)$.

We now proceed to consider GIT with respect to the $G$--line bundles $\lambda_\F(H)$ on $Z$. First we set the notation.

\begin{notation}If $\Gamma$ is an algebraic group acting on a scheme $X$
and $L$ is an ample $\Gamma$-linearized line bundle on $X$, we denote by $X^{ss}(L, \Gamma)$ the locus of semistable points in $X$ with respect to the 
$\Gamma$-linearized line bundle $L$ and by $X^{s}(L, \Gamma)$ the locus of stable points. When $L={\mc O}_X$ and $\chi: \Gamma \to \CC$ is a character of $\Gamma$, we denote by $X^{ss}(\chi, \Gamma)$ the locus of semistable points in $X$ with respect to the $\Gamma$-linearization of ${\mc O}_X$ induced by the character $\chi$, and by $X^{s}(L, \Gamma)$ the locus of stable points. When there is no risk of confusion, we omit the group $\Gamma $ from the notation and write $X^{ss}(L), X^{s}(L), \dots$, instead. 

If $L'$ and $L'$ are two $\Gamma$--line bundles  such that $X^{ss}(L, \Gamma)=X^{ss}(L', \Gamma)$ and $X^{s}(L, \Gamma)=X^{s}(L', \Gamma)$, we say that $L$ and $L'$ are $\Gamma$--equivalent, or that they define the same GIT with respect to $\Gamma$.
\end{notation}

Let $\lambda: \CC^\times \to \Gamma$ be a one-parameter subgroup ($1$ p.s.g.) and let $x \in X$ be a point. Suppose that the limit $\lim_{t \to 0} \lambda(t) \cdot x$ exists, and denote it by $\ov x$. Then, the image of the $1$ p.s.g. is contained in $\Stab_x$ and the composition of $\lambda$ with the linearization morphism $\Stab_x \to \Aut(L_x)=\CC^\times$ defines a morphism $\CC^\times \to \CC^\times$, $t \mapsto t^n$. The integer $n$, denoted by $\omega_x(\lambda, L)$, is the weight of $\lambda$ at the point $\ov x$.

We  recall the affine version of the Hilbert--Mumford criterion (cf. \cite{King}).

\begin{prop} \label{Hilb Mum}
Let $\Gamma$ be a reductive group acting an an affine scheme $X$, let $\Gamma' \subset \Gamma$ be the kernel of the action, and let $L$ be a $\Gamma$--line bundle on $X$. Then
$x \in X^{ss}(L)$ if and only if any $1$ p.s.g. $\lambda$ for which the limit exists satisfies $\omega_x(\lambda, L) \ge 0$. Moreover, $x \in X^{s}(L)$ if and only if it is semistable and for any $1$ p.s.g. $\lambda$ for which the limit exists and $\omega_x(\lambda, L) = 0$ we have $\lambda \subset \Gamma'$.
\end{prop}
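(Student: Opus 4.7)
The plan is to reduce the statement to the classical Hilbert--Mumford numerical criterion for orbit closures in an affine $\Gamma$-variety, following the strategy of Proposition~2.5 in \cite{King}. I would first replace $L$ by a sufficiently high tensor power $L^{\otimes k}$; this changes neither $X^{ss}(L,\Gamma)$ nor $X^s(L,\Gamma)$, and multiplies each weight $\omega_x(\lambda,L)$ by the fixed positive integer $k$, so the statement is unaffected. After this reduction I would pass to the total space of the dual line bundle $L^\vee$, an affine $\Gamma$-variety on which the given linearization lifts the action, and fix a nonzero lift $\tilde x$ of $x$ in the fibre over $x$. The key reformulation is then that $x \in X^{ss}(L,\Gamma)$ if and only if the zero section is not contained in the closure $\overline{\Gamma\cdot \tilde x}$; equivalently, there exists an invariant section $s\in H^0(X,L^{\otimes n})^\Gamma$ with $s(x)\ne 0$.

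For the necessity of $\omega_x(\lambda,L)\ge 0$, assume $x$ is $L$-semistable and fix such an $s$ with $s(x)\ne 0$. For any $1$-PSG $\lambda$ with existing limit $\bar x=\lim_{t\to 0}\lambda(t)\cdot x$, continuity and $\Gamma$-invariance of $s$ yield
\[
s(\bar x) \;=\; \lim_{t\to 0} s(\lambda(t)\cdot x) \;=\; \lim_{t\to 0} \lambda(t)\cdot s(x),
\]
and in a local trivialization of $L$ at $\bar x$ the subgroup $\lambda(t)$ scales the fibre $L^{\otimes n}_{\bar x}$ by $t^{n\,\omega_x(\lambda,L)}$; a negative weight would force this limit to diverge, contradicting the existence of $s(\bar x)$.

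The sufficiency direction is the crux and the main obstacle. Suppose $x \notin X^{ss}(L,\Gamma)$, so that the closure $\overline{\Gamma\cdot\tilde x}$ inside the total space of $L^\vee$ meets the zero section. Applying the Hilbert--Mumford criterion for orbit closures in an affine scheme acted on by a reductive group (the Kempf--Rousseau--Richardson theorem, where the reductivity of $\Gamma$ is essential) produces a $1$-PSG $\lambda:\CC^\times\to\Gamma$ such that $\lim_{t\to 0}\lambda(t)\cdot\tilde x$ lies on the zero section. Pushing forward, $\bar x=\lim_{t\to 0}\lambda(t)\cdot x$ exists in $X$, and the collapse of the lift onto the zero section is precisely the statement that $\omega_x(\lambda,L)<0$, contradicting the hypothesis. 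Invoking this criterion in a form compatible with the chosen linearization, and carefully tracking the sign of the weight under the lift to $L^\vee$, is the delicate step; everything else is bookkeeping.

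For the stability refinement, suppose $x$ is semistable and $\lambda$ satisfies $\omega_x(\lambda,L)=0$ with existing limit $\bar x$. For an invariant section $s$ with $s(x)\ne 0$ the zero-weight assumption gives $s(\bar x)=s(x)\ne 0$, so $\bar x$ lies in the $\Gamma$-invariant affine open $X_s=\{s\ne 0\}\subset X^{ss}(L,\Gamma)$ and in $\overline{\Gamma\cdot x}$. Stability of $x$ means $\Gamma\cdot x$ is closed in $X^{ss}(L,\Gamma)$ and $\Stab_\Gamma(x)/\Gamma'$ is finite; closedness forces $\bar x \in \Gamma\cdot x$, so after replacing $\lambda$ by a conjugate one obtains $\lambda\subset \Stab_\Gamma(x)$, and finiteness of the effective stabilizer forces $\lambda\subset \Gamma'$. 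The converse direction — a semistable $x$ satisfying the weight-zero condition is stable — follows by reapplying the same numerical criterion (now in its form characterizing closed orbits of reductive groups in affine varieties) to $X^{ss}(L,\Gamma)$, together with the observation that finiteness of the stabilizer modulo $\Gamma'$ is precisely the obstruction to the existence of nontrivial $1$-PSGs outside $\Gamma'$ having zero weight at $x$.
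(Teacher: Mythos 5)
Your argument is correct in outline and is, in substance, the same as the source the paper relies on: the paper gives no proof of this proposition but quotes it from \cite{King}, and your reduction --- pass to the total space of $L^\vee$, characterize semistability by the orbit closure of a lift $\tilde x$ being disjoint from the zero section, and invoke the Kempf--Birkes form of the Hilbert--Mumford criterion for orbit closures in affine $\Gamma$-varieties --- is exactly the standard proof of King's criterion, carried over verbatim from the character/trivial-bundle case to a general $\Gamma$-linearized $L$, with the sign bookkeeping under the lift to $L^\vee$ handled consistently with the paper's convention for $\omega_x(\lambda,L)$. The one point worth flagging is that Definition \ref{definition stable affine} asks for a finite stabilizer, whereas the proposition (and your proof) correctly work with the stabilizer modulo the kernel $\Gamma'$; this is the intended convention here, since in the application the center of $G$ acts trivially.
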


Notice that a necessary condition for a point $x$ to be semistable is that if $\lambda \subset \Gamma'$, then $\omega_x(\lambda, L) = 0$. For example, if we are considering the semi-stability with respect to the trivial line bundle, linearized by a character $\chi: \Gamma \to \CC^\times$, then a necessary condition for the existence of semistable points is that $\chi$ is trivial on the kernel of the action.

In the case of the group $G=\Aut(\oplus (F_i \otimes V_i ))$ acting on the \'etale slice $Z$, the center $\CC^\times \subset G$ acts trivially on every point, but from Lemma \ref{character 1} will see that it acts with non-trivial weight.
This is a point  analogous  to the one observed in  Remark \ref{center acts trivially}. In the context of  Quot schemes and  moduli spaces,  the problem is solved by restricting the action to the subgroup of elements with trivial determinant
\[
G':=G \cap SL(\oplus V_i),
\]
so that there are no one parameter subgroups contained in the kernel of the action.
Clearly any $G$--linearization restricts to a $G'$--linearization, so that we can consider the determinant line bundle $\lambda_\F(\ell H)$ as a $G'$--line bundle.
We set
\[
Z^{ss}_{\ell H}:=Z^{ss}(\lambda_\F(\ell H), G'), \quad \text{and} \quad Z^{s}_{\ell H}:=Z^{s}(\lambda_\F(\ell H), G')
\]

We are ready to state the main result of the section.

\begin{prop} \label{from H to L}
Let $Z$, $\F$, $G$ and $G'$ be as above. Let $H$ be an ample line bundle in a chamber adjacent to $H_0$, and consider $\ell \gg 0$. We consider $\lambda_\F(\ell H)$ with the $G'$-linearization as above. Then any $z \in Z$ such that  $\F_z$ is $H$--stable, is also $\lambda_\F(\ell H)$--semistable, i.e., there is an inclusion
\[
Z^H \subset Z^{ss}_{\ell H}.
\]
Moreover, this inclusion is saturated.
\end{prop}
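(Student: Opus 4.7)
The plan is to apply the affine Hilbert--Mumford criterion (Proposition \ref{Hilb Mum}) to the $G'$-action on the affine slice $Z$ with line bundle $\lambda_\F(\ell H)$, and to reduce to the classical $SL(\VV)$-GIT on the Quot scheme.

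First, I would translate 1-parameter subgroups of $G'$ into filtrations of $\F_z$. A 1-psg $\lambda$ of $G' \subset SL(\VV)$ gives a weight decomposition $\VV = \bigoplus_n W^{(n)}$ respecting the direct sum $\VV = \bigoplus_i V_i$ and satisfying $\sum_n n \dim W^{(n)} = 0$. Setting $\VV_{\le n} := \bigoplus_{m \le n} W^{(m)}$, the image of $\VV_{\le n} \otimes \mc O_S$ under the quotient $\VV \otimes \mc O_S \twoheadrightarrow \F_z$ gives an ascending filtration $\F_{\le n} \subset \F_z$ with graded pieces $\F^{(n)} := \F_{\le n}/\F_{\le n-1}$. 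When the limit $\ov z := \lim_{t \to 0} \lambda(t) \cdot z$ lies in $Z$, it corresponds to the quotient $\VV \otimes \mc O_S \twoheadrightarrow \bigoplus_n \F^{(n)}$; since $\lambda$ acts with weight $n$ on $H^0(\F^{(n)}(\ell H))$, the weight on the determinant line bundle is
\[
\omega_z(\lambda, \lambda_\F(\ell H)) = \sum_n n \cdot h^0(\F^{(n)}(\ell H)),
\]
the $H^1$-vanishing needed to make sense of the formula at $\ell \gg 0$ being furnished by Corollary \ref{coro}.

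Second, I would invoke the Gieseker--GIT correspondence on the Quot scheme (Huybrechts--Lehn, Theorems 4.3.3--4.4.1): for $\ell \gg 0$, a point $q \in \Quot^{ss}_{H_0}$ is $SL(\VV)$-GIT (semi)stable with respect to $\lambda_\F(\ell H)$ if and only if $\F_q$ is $H$-Gieseker (semi)stable, which for pure dimension one sheaves coincides with $H$-slope (semi)stability. Since $G' \subset SL(\VV)$, every 1-psg of $G'$ is a 1-psg of $SL(\VV)$, so $SL(\VV)$-semistability implies $G'$-semistability. By Luna's \'etale slice theorem (cf.\ \cite{Drezet-Luna}), the $SL(\VV)$-GIT-semistable locus restricted to the slice coincides with the affine GIT-semistable locus on $Z$, yielding $Z^H \subset Z^{ss}_{\ell H}$.

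Third, for the saturation statement, I would upgrade the containment: any $z \in Z^H$ is actually $G'$-\emph{stable}. Indeed, $\Stab_{GL(\VV)}(z) = \Aut(\F_z) = \CC^\times$ because $\F_z$ is $H$-stable, so $\Stab_{G'}(z)$ is finite; together with the $SL(\VV)$-GIT stability just established, this forces $z$ to be $G'$-GIT stable. Hence every $G'$-orbit in $Z^H$ is closed in $Z^{ss}_{\ell H}$ and coincides with its own $S_\chi$-equivalence class, so $Z^H$ is a union of $S_\chi$-equivalence classes, i.e., the inclusion is saturated. The main obstacle I expect is the bookkeeping in the first two steps, specifically checking that the $G'$-linearization inherited from the ambient $SL(\VV)$-linearization on $\lambda_\F(\ell H)$ matches the one fixed in the proposition (so that sign conventions in the numerical Hilbert--Mumford inequality align) and verifying that the Luna slice inherits the GIT structure from the Quot scheme in the form needed here.
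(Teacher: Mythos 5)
Your first step (the weight formula for one--parameter subgroups of $G'$, with the $H^1$--vanishing supplied by Corollary \ref{coro}) matches the paper's Lemma \ref{action of stab}. The gap is in your second step. You invoke the Gieseker--GIT correspondence from Huybrechts--Lehn as if it gave: $q\in\Quot^{ss}_{H_0}$ is $SL(\VV)$--GIT semistable with respect to $\lambda_\F(\ell H)$ if and only if $\F_q$ is $H$--semistable. But HL Theorems 4.3.3/4.4.1 establish that correspondence only when the Quot scheme and the space $\VV$ are normalized with respect to the \emph{same} polarization that appears in the determinant line bundle, i.e.\ $\VV=H^0(\F_q(mH))$ for $m\gg0$ relative to $H$. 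Here the setup is mixed: $\VV=H^0(F)$ is adapted to $H_0$ (that is exactly what $\Quot_{H_0}$ and Corollary \ref{coro} provide), while the linearization uses $H$. In this mixed situation the equivalence is precisely what has to be proven, and it fails for arbitrary $H$: the proof needs $H$ adjacent to $H_0$. Concretely, for a subspace $\VV'\subset\VV$ generating $\F'_z\subset\F_z$ one must show $(\dim\VV)\,(c_1(\F'_z)\cdot H)\ge(\dim\VV')\,(c_1(\F_z)\cdot H)$; one only knows $\dim\VV'\le h^0(\F'_z)$, and to convert $h^0(\F'_z)$ into $\chi(\F'_z)$ and then into an $H$--slope inequality one has to split into the cases $\mu_{H_0}(\F'_z)=\mu_{H_0}(\F_z)$ (where Corollary \ref{coro} gives $h^0=\chi$ and $H$--semistability of $\F_z$ finishes the estimate) and $\mu_{H_0}(\F'_z)<\mu_{H_0}(\F_z)$ (where the strict inequality persists only because $H$ is adjacent to $H_0$). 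This is the paper's Lemma \ref{lemma V'}, and your argument never uses adjacency --- a sign that the key step has been assumed rather than proved.

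Two smaller points. Your claim that the $SL(\VV)$--GIT--semistable locus restricted to the slice \emph{coincides} with the affine GIT--semistable locus on $Z$ is stronger than what Luna's theorem gives and stronger than what is needed; the paper only uses the one--way implication from the affine Hilbert--Mumford criterion (one--parameter subgroups of $G'$ with limits in $Z$ have limits in the Quot scheme, so the numerical condition on $Z$ is a priori weaker --- the paper explicitly warns that the converse direction of HL Lemma 4.4.5 may fail on the slice). Your saturation argument is in the right spirit and is essentially the paper's Corollary \ref{cor 2}: since $H$ lies in a chamber, $Z^H$ lands in the GIT--stable locus, and an invariant open subset of the stable locus is automatically saturated; but as stated it too rests on the unproved mixed correspondence above.
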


To prove Proposition \ref{from H to L}, we need a few lemmas, which are the adaptation of the treatment of \S 4.3 of \cite{Huybrechts-Lehn}, which we follow closely, to our context. These will lead to Corollary \ref{cor 1} and Corollary \ref{cor 2} which, together, prove the proposition. 

The notation will be as follows.
For a point $z \in Z$ we let
\[
\rho: \VV\otimes \mc O_S \to \F_z
\]
be the corresponding quotient.
For a subspace $\VV' \subset \VV$, we let
\[
\F_z'=\rho(\VV') \subset \F_z
\]
be the sub sheaf generated by $\VV'$ and we set
\[
D':=c_1(\F_z').
\]
Finally, for any sheaf $E$ on $S$ and any ample $H$ we let
\[
P_H(E,\ell)=\chi(E (\ell H)),
\]
be the Hilbert polynomial of $E$ with respect to $H$.

In order to use the Hilbert--Mumford criterion, we need to understand the limits of the one parameter subgroups of $G'$. Following \cite{Huybrechts-Lehn}, we set up the following notation. For any one parameter subgroup $\eta : \CC^\times \to G'$ let $\VV=\oplus_{\alpha \in \ZZ} \VV_\alpha$ be its weight decomposition. If
$z \in Z$ is a point corresponding to a surjection $\rho:  \VV \otimes \mc O_S \to \F_z$, we set $\VV_{\ge \alpha }= \oplus_{\beta \ge \alpha} \VV_\beta$ and define
\[
(\F_z)_{ \le \alpha}=\rho(\VV_{\ge \alpha } \otimes \mc O_S) \subset \F_z, \quad \text{and} \quad (\F_z)_ \alpha=(\F_z)_{ \le \alpha}/ (\F_z)_{ \le \alpha-1 }
\]

\begin{lem} \label{action of stab}
Let $z \in Z$ be a point and let $\eta: \CC^\times \to \Stab_z$ be a one--parameter subgroup. Then, 
\be \label{weight}
\omega_z(\eta, \lambda_\F(\ell H)=\sum \alpha P_H( (\F_z)_ \alpha, \ell ).
\ee
\end{lem}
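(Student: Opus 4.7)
The plan is to reduce the computation of $\omega_z(\eta,\lambda_\F(\ell H))$ to that of the weight of $\eta$ on the one-dimensional fiber $\lambda_\F(\ell H)_z$, and then to extract this weight from the eigensheaf decomposition of $\F_z$ induced by $\eta$. Since $\eta$ factors through $\Stab_z$, the limit $\lim_{t\to 0}\eta(t)\cdot z$ equals $z$ itself, so by definition $\omega_z(\eta,\lambda_\F(\ell H))$ is exactly the integer $n$ for which $\eta(t)$ acts on $\lambda_\F(\ell H)_z$ by multiplication by $t^n$.

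For $\ell \gg 0$, so that $H^i(\F_z(\ell H))=0$ for $i>0$, base change gives a canonical $\Stab_z$-equivariant isomorphism
\[
\lambda_\F(\ell H)_z\;\cong\;\det H^0(\F_z(\ell H)).
\]
The description of the $G$-linearization on $\lambda_\F(\ell H)$ recalled at the start of this subsection identifies the resulting action of $\Stab_z$ on the right-hand side with the composition $\Stab_z\to\Aut(\F_z)\to \GL\bigl(H^0(\F_z(\ell H))\bigr)$ followed by $\det$. Thus it suffices to compute the determinant character of $\eta$ acting on $H^0(\F_z(\ell H))$.

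Since $\CC^\times$ is reductive, the $\eta$-action splits $\F_z$ into a direct sum of eigensheaves. By the $\Stab_z$-equivariance of $\rho:\VV\otimes\mc O_S\to\F_z$, this decomposition is compatible with the weight decomposition $\VV=\oplus\VV_\alpha$, and its eigensheaves are precisely the graded pieces $(\F_z)_\alpha$, with $\eta(t)$ acting by $t^\alpha$ on $(\F_z)_\alpha$. Taking $H^0(\,\cdot\,(\ell H))$ preserves this eigenspace decomposition, giving
\[
H^0(\F_z(\ell H))=\bigoplus_\alpha H^0\bigl((\F_z)_\alpha(\ell H)\bigr),
\]
and hence the determinant character of $\eta$ equals
\[
\sum_\alpha \alpha\,\dim H^0\bigl((\F_z)_\alpha(\ell H)\bigr)=\sum_\alpha \alpha\,P_H\bigl((\F_z)_\alpha,\ell\bigr),
\]
using once more the vanishing of higher cohomology on each graded piece for $\ell\gg 0$. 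The main point that needs careful verification is the $\Stab_z$-equivariance of $\rho$ together with the resulting identification of the eigensheaves of the $\eta$-action with the sheaves $(\F_z)_\alpha$; both follow directly from the defining property of the $\GL(\VV)$-linearization on the universal quotient recalled at the start of the subsection, after which the matching of weights, and thus the formula, is automatic.
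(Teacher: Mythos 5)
Your argument is correct and is essentially the paper's own proof: the paper simply cites Lemmas 4.4.3 and 4.4.4 of Huybrechts--Lehn, whose content is exactly the computation you carry out (identifying the fiber of $\lambda_\F(\ell H)$ at $z$ with $\det H^0(\F_z(\ell H))$ for $\ell\gg 0$ and reading off the weight from the eigensheaf decomposition induced by $\eta$). The only difference is cosmetic: since the statement assumes $\eta$ lands in $\Stab_z$, you may work directly with the eigensheaf splitting of $\F_z$ rather than first passing to the limit quotient $\oplus_\alpha(\F_z)_\alpha$ as in the cited lemmas, and the two coincide in this case.
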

\begin{proof}
This is Lemmas 4.4.3 and 4.4.4 of \cite{Huybrechts-Lehn}.
\end{proof}

\begin{lem} There exist an $\ell_0$ such that, if $z \in Z$ is a point  such that for any subspace $\VV' \subset \VV$ we have
\be \label{inequality}
(\dim \VV )(D' \cdot H)  \ge (\dim \VV')( D \cdot H),
\ee
then $z \in Z^{ss}_{\ell H}$ for all $\ell \ge \ell_0$. Moreover, if strict inequality holds in (\ref{inequality}), then $z \in Z^{s}_{\ell H}$ for all $\ell \ge \ell_0$
\end{lem}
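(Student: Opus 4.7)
The plan is to apply the Hilbert--Mumford criterion (Proposition \ref{Hilb Mum}) to the $G'$--action on $Z$ linearized by $\lambda_\F(\ell H)$. Given a one--parameter subgroup $\eta : \CC^\times \to G'$ for which $\lim_{t\to 0}\eta(t)\cdot z$ exists, decompose $\VV = \bigoplus_\alpha \VV_\alpha$ into weight spaces of $\eta$, let $\alpha_1 > \alpha_2 > \cdots > \alpha_m$ be the weights, and set $\VV^{(k)} = \VV_{\alpha_1}\oplus\cdots\oplus\VV_{\alpha_k}$, $\F^{(k)}_z = \rho(\VV^{(k)}\otimes\mc O_S)\subset \F_z$, $D^{(k)} = c_1(\F^{(k)}_z)$. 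Since $\eta$ lies in $G'\subset SL(\oplus V_i)$, the trace relation $\sum_k \alpha_k \dim \VV_{\alpha_k}=0$ holds.

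By Lemma \ref{action of stab}, $\omega_z(\eta,\lambda_\F(\ell H)) = \sum_k \alpha_k P_H(\F^{(k)}_z/\F^{(k-1)}_z,\ell)$. Two Abel summations---the second one using the trace relation to eliminate the boundary term $\alpha_m P_H(\F_z,\ell)$---rewrite this as
\[
\omega_z(\eta,\lambda_\F(\ell H)) \;=\; \frac{1}{\dim \VV}\sum_{k=1}^{m-1}(\alpha_k-\alpha_{k+1})\Bigl[(\dim \VV)\,P_H(\F^{(k)}_z,\ell)\,-\,(\dim \VV^{(k)})\,P_H(\F_z,\ell)\Bigr],
\]
with positive coefficients $\alpha_k-\alpha_{k+1}$. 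Because $\F_z$ is pure of dimension one, $P_H(E,\ell) = (c_1(E)\cdot H)\ell + \chi(E)$ is linear in $\ell$, so the $k$--th bracket equals $\bigl[(\dim \VV)(D^{(k)}\cdot H) - (\dim \VV^{(k)})(D\cdot H)\bigr]\ell$ plus a bounded constant. The hypothesis (\ref{inequality}) applied to $\VV' = \VV^{(k)}$ makes the leading $\ell$--coefficient of each bracket $\ge 0$, and strictly positive under strict inequality.

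To close the argument via Hilbert--Mumford I need a \emph{uniform} $\ell_0$ that works for every $\eta$. This is obtained by a boundedness argument: the subsheaves of $\F_z$ of the form $\rho(\VV'\otimes\mc O_S)$ all have $c_1$ dominated by $D$, and by Corollary \ref{coro} are generated by global sections with vanishing higher cohomology; hence they form a bounded family and their Euler characteristics are uniformly bounded. For $\ell\ge\ell_0$ sufficiently large, strict positivity of the leading term forces strict positivity of the bracket, and therefore $\omega_z(\eta,\lambda_\F(\ell H))>0$, yielding the stable case. In the semistable case, whenever the leading coefficient vanishes for some $k$ the subsheaf $\F^{(k)}_z$ strictly $H_0$--semi--destabilizes $\F_z$, and then Corollary \ref{coro} gives $h^0(\F^{(k)}_z)(D\cdot H_0) = h^0(\F_z)(D^{(k)}\cdot H_0)$; combined with the elementary inequality $\dim \VV^{(k)} \le h^0(\F^{(k)}_z)$ this makes the corresponding constant term also $\ge 0$.

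The main obstacle is exactly this uniformity in $\eta$ coupled with the delicate semistable case: $\ell_0$ must not depend on the filtration produced by $\eta$, and the constant--term analysis must be tight enough to separate $\ge 0$ from $>0$. This is precisely what Corollary \ref{coro} (built on Le Potier's Theorem \ref{lepotier}) provides, by translating the Quot scheme numerical inequality into the cohomological inequality needed to control the constants uniformly over all admissible sub--sheaves.
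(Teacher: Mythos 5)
Your overall route is the same as the paper's: the paper proves this lemma by observing that boundedness of the family of subsheaves $\rho(\VV'\otimes\mc O_S)$ lets one pass from the leading--coefficient inequality (\ref{inequality}) to the Hilbert--polynomial inequality uniformly in $\ell$, and then cites the weight computation of Lemma 4.4.5 of \cite{Huybrechts-Lehn} together with Proposition \ref{Hilb Mum}; you have simply written out that computation (filtration by weight spaces, Abel summation, the trace relation for one--parameter subgroups of $G'$) explicitly. That part is correct, and in the \emph{strict} case it closes cleanly: only finitely many numerical types $(\dim\VV',\,D',\,\chi(\F_z'))$ occur, so a uniform $\ell_0$ exists for which strict positivity of the leading coefficient forces positivity of every bracket and hence of every weight.

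The gap is in your treatment of the semistable case when the leading coefficient of the $k$--th bracket vanishes. You assert that $(\dim\VV)(D^{(k)}\cdot H)=(\dim\VV^{(k)})(D\cdot H)$ implies that $\F^{(k)}_z$ strictly $H_0$--semi--destabilizes $\F_z$, and then invoke Corollary \ref{coro} to control the constant term. But the vanishing of the leading coefficient is a numerical condition involving $H$, while strict $H_0$--semi--destabilization is a condition on $H_0$--slopes; the linear function $x\mapsto (\dim\VV)(D^{(k)}\cdot x)-(\dim\VV^{(k)})(D\cdot x)$ can perfectly well be strictly positive at $H_0$ (which is all that Corollary \ref{coro} gives when $\mu_{H_0}(\F^{(k)}_z)<\mu_{H_0}(\F_z)$) and still vanish at the adjacent $H$, since the hyperplane it cuts out need not be a $v$--wall through $H_0$. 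In that situation one could have $h^1(\F^{(k)}_z)\neq 0$, hence $\chi(\F^{(k)}_z)<\dim\VV^{(k)}$ and a negative constant term, and your estimate gives nothing. To be fair, the paper's own one--line claim that (\ref{inequality}) is ``equivalent'' to the polynomial inequality suffers from the same ambiguity in the equality case; the honest resolution is that in the only place the equality case is actually used (Lemma \ref{lemma V'} and Corollary \ref{cor 1}), equality in (\ref{inequality}) is shown there to occur only for subspaces $\VV'$ generating a subsheaf with $\mu_{H_0}(\F_z')=\mu_{H_0}(\F_z)$ --- and \emph{that} hypothesis, not the vanishing of the leading $H$--coefficient, is what yields $\chi(\F_z')=h^0(\F_z')\ge\dim\VV'$ and hence the nonnegativity of the constant term. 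So either strengthen the hypothesis of the lemma accordingly, or import that extra information at the point of application.
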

\begin{proof}
Since $D \cdot H$ and $D' \cdot H$ are the coefficients of the leading terms of the Hilbert polynomials of $\F_z'$ and $\F_z$ respectively, and since the set of sheaves of the form $\rho(\VV') \subset \mc F_z$, for $\VV' \subset \VV$ and $z \in Z$ is bounded, there exist an $\ell_0$ such that (\ref{inequality}) is equivalent to
\[
\dim \VV P_H(\F_z',\ell)  \ge \dim \VV' P_H(\F_z,\ell), \quad \text{ for } \ell \gg 0
\]
(and analogously for strict inequality).
The Lemma then follows from Proposition \ref{Hilb Mum} as in the ``only if'' direction of Lemma 4.4.5 of \cite{Huybrechts-Lehn}, using the weight description (\ref{weight}).
\end{proof}

Notice that there could be one-parameter subgroups of $G'$ that do not admit a limit in $Z$, so that we cannot claim the validity of  the reverse  direction in  loc. cit. (i.e., that if $z \in Z$ is semistable then (\ref{inequality}) holds). Our claim is that we can do so as soon as the point $z$ lies in $Z^H$.

\begin{lem} \label{lemma V'} 
Let $H$ be an ample line bundle adjacent to $H_0$.
If $z \in Z^H$ is a point corresponding to an $H$--semistable sheaf, then for any subspace $\VV' \subset \VV$ we have
\be \label{inequality 1}
(\dim \VV) (D' \cdot H)  \ge (\dim \VV')( D \cdot H).
\ee
Moreover, if $z$ corresponds to an $H$--stable sheaf, then strict inequality holds in (\ref{inequality 1}).
\end{lem}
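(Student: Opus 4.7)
The strategy is to apply Le Potier's criterion (Theorem \ref{lepotier}) with the polarization $H$, combined with the natural inclusion $\VV' \hookrightarrow H^0(\F_z')$ coming from the universal quotient on $Z$. After the chosen twist of $F$ by $H_0$ in Corollary \ref{coro}, the sheaves $\F_z$ for $z \in Z$ form a bounded family with $h^i(\F_z)=0$ for $i>0$; in particular the canonical map identifies $\VV = H^0(F) \cong H^0(\F_z)$. Since $\F_z'$ is, by definition, the image of $\VV'\otimes \mc O_S \to \F_z$, the sections in $\VV'$ factor through $H^0(\F_z')\hookrightarrow H^0(\F_z)\cong \VV$, and the resulting map $\VV' \to H^0(\F_z')$ is injective because its composition with $H^0(\F_z')\hookrightarrow \VV$ is the given inclusion $\VV'\hookrightarrow \VV$. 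Therefore $\dim \VV' \le h^0(\F_z')$.

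Since $\F_z$ is $H$-semistable, Le Potier's theorem applied with polarization $H$ (after possibly enlarging the twist of $F$ in Corollary \ref{coro} so that $m=0$ suffices uniformly for every $H$ in the chambers adjacent to $H_0$---of which there are only finitely many by Theorem-Definition \ref{v walls}, and the corresponding families of pairs $(\F_z,\F_z')$ are bounded) yields, for every sub-sheaf $\F_z'\subset \F_z$, the inequality
\[
\frac{h^0(\F_z')}{D' \cdot H} \le \frac{h^0(\F_z)}{D \cdot H} = \frac{\dim \VV}{D \cdot H}.
\]
Combining this with the bound $\dim \VV' \le h^0(\F_z')$ established above gives exactly the desired inequality $(\dim \VV')(D\cdot H) \le (\dim \VV)(D'\cdot H)$.

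For the strict inequality under $H$-stability, Le Potier's criterion further records that equality in the displayed inequality can hold only when $\F_z'$ makes $\F_z$ strictly $H$-semistable. If $\F_z$ is $H$-stable, no proper nonzero sub-sheaf can achieve equality, so the inequality is strict whenever $\F_z'\subsetneq \F_z$. In the residual situation where $\VV' \subsetneq \VV$ but $\F_z'=\F_z$, we have $D'=D$ and the required strict inequality reduces to the trivial $\dim \VV' < \dim \VV$. The principal technical step---and the one that requires the most care---is the uniform boundedness/Serre-vanishing argument that upgrades Corollary \ref{coro} so that Le Potier's criterion is available at $m=0$ for the polarization $H$ as well as for $H_0$; once this is in place, the combinatorial argument above is straightforward.
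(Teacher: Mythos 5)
Your overall strategy --- bound $\dim \VV'$ by $h^0(\F_z')$ via the injection $\VV' \hookrightarrow H^0(\F_z')$ and then invoke a Le Potier--type inequality $h^0(\F_z')\,(D\cdot H) \le h^0(\F_z)\,(D'\cdot H)$ --- is sound, and your treatment of strictness (including the residual case $\F_z'=\F_z$ with $\VV'\subsetneq\VV$) is correct. The gap is in the central step, which you yourself flag but do not carry out. Theorem \ref{lepotier} ``applied with the polarization $H$'' is a statement about $h^0(\mc G'(mH))$, i.e.\ about twists in the direction of $H$; what you actually need is the displayed inequality for $h^0$ computed after the ambient twist by $mH_0$ fixed in Corollary \ref{coro}. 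This hybrid statement ($H$-semistability controlling untwisted $h^0$ of all subsheaves) is not provided by Theorem \ref{lepotier} or Corollary \ref{coro}, and the boundedness you appeal to is not literally true: the family of \emph{all} pairs $(\F_z,\F_z')$ is unbounded, since $\chi(\F_z')$ is unbounded below. The step is fillable --- replace $\F_z'$ by its saturation, which leaves $D'$ unchanged and can only increase $h^0$; saturated subsheaves do form a bounded family, so a uniform twist kills their $h^1$ and reduces the inequality to $H$-semistability of $\F_z$ --- but as written the crux of the lemma is asserted rather than proved.

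For comparison, the paper never states a Le Potier criterion at $H$: it applies Corollary \ref{coro} (the $H_0$-version, already in place) to get $h^0(\F_z')\,(D\cdot H_0) \le h^0(\F_z)\,(D'\cdot H_0)$ and then splits into two cases. If $\F_z'$ has the same $H_0$-slope as $\F_z$, the vanishing clause of Corollary \ref{coro} gives $h^0(\F_z')=\chi(\F_z')$ and $H$-semistability of $\F_z$ finishes the argument directly; if the $H_0$-inequality is strict, adjacency of $H$ to $H_0$ is used to propagate the strict inequality from $H_0$ to $H$. The two routes are close in spirit, but the paper's case split confines the use of cohomology vanishing to subsheaves of the same $H_0$-slope, exactly where Corollary \ref{coro} already guarantees it, whereas your route needs vanishing for all saturated subsheaves and hence the additional uniform boundedness argument you deferred.
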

\begin{proof}
Let $\VV' \subset \VV$ be a subspace and let $\F_z' \subset \F_z$ be the sheaf generated by $\VV'$. Since $\F_z$ is $H_0$--semistable, we can apply Corollary \ref{coro} and conclude that
\be \label{inequality 2}
\ff{\dim \VV'}{D'\cdot H_0} =\ff{h^0(\F_z')}{D'\cdot H_0} \le \ff{h^0(\mc F_z)}{D\cdot H_0}=\ff{\dim \VV}{D\cdot H_0},
\ee
and that equality holds if and only if $\F_z'$ has the same reduced $H_0$--Hilbert polynomial as $\F_z$. We distinguish two cases, depending on wether $\F_z'$ has the same $H_0$--slope of $\F_z$ or not. In the first case, since by the corollary $H^i(\F_z')=0$, for $i>0$, we have $\dim \VV'=h^0(\F_z')=\chi(\F_z')$. But $\F_z$ is $H$--semistable so that we get
\[
\ff{\dim \VV'}{D'\cdot H}= \ff{\chi(\F_z')}{D'\cdot H} \le \ff{ \chi}{D\cdot H}=\ff{\dim \VV}{D\cdot H},
\]
with strict inequality in the case $\F_z$ is $H$--stable.
In the second case strict inequality holds in (\ref{inequality 2}). In this case, since $H$ is adjacent to $H_0$ the strict inequality continues to hold and hence the lemma is proved.
\end{proof}

\begin{cor} \label{cor 1}
There exists an $\ell_0$ such that for $\ell \gg \ell_0$ we have $Z^H \subset Z^{ss}_{\ell H}$ (and the set of $H$--stable sheaves is contained in $Z^{s}_{\ell H}$).
\end{cor}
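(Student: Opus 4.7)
The plan is essentially to stitch together the two immediately preceding lemmas, since Corollary \ref{cor 1} is their direct consequence. I would begin by fixing $\ell_0$ as in the first of those two lemmas, namely the threshold above which the numerical inequality (\ref{inequality}) on subspaces $\VV' \subset \VV$ implies semistability (respectively stability) of $z$ with respect to the $G'$-linearized determinant line bundle $\lambda_\F(\ell H)$.

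Next, given any point $z \in Z^H$, I would invoke Lemma \ref{lemma V'}: since $\F_z$ is $H$-semistable (and $H$ is adjacent to $H_0$, with the Le Potier-type conditions of Corollary \ref{coro} in force), for every subspace $\VV' \subset \VV$ we have $(\dim\VV)(D'\cdot H) \ge (\dim\VV')(D\cdot H)$, where $D' = c_1(\F_z')$ and $\F_z' = \rho(\VV')$. This is exactly the hypothesis of the preceding lemma, so that lemma delivers $z \in Z^{ss}_{\ell H}$ for all $\ell \ge \ell_0$, giving the inclusion $Z^H \subset Z^{ss}_{\ell H}$.

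Finally, for the refinement concerning $H$-stable sheaves, I would repeat the same argument with the strict-inequality halves of both lemmas: Lemma \ref{lemma V'} yields the strict form of (\ref{inequality 1}) whenever $\F_z$ is $H$-stable, and the previous lemma then promotes semistability to stability with respect to $\lambda_\F(\ell H)$. There is no real obstacle here; the work was already done in establishing the two lemmas, and the only point to watch is that a single $\ell_0$ can be chosen uniformly, which is guaranteed by the boundedness of the family of subsheaves $\rho(\VV') \subset \F_z$ as $z$ varies in $Z$ and $\VV'$ varies in $\VV$.
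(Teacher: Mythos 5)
Your proof is correct and is exactly the argument the paper intends: the corollary is the direct concatenation of Lemma \ref{lemma V'} (which converts $H$-semistability of $\F_z$ into the numerical inequality (\ref{inequality 1}) for all subspaces $\VV'\subset\VV$) with the preceding lemma (which converts that inequality into $\lambda_\F(\ell H)$-semistability for $\ell\ge\ell_0$), together with the strict-inequality variants for the stable case. The uniformity of $\ell_0$ via boundedness is likewise already built into the first lemma, so there is nothing further to add.
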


The following corollary ends the proof of Proposition \ref{from H to L}

\begin{cor} \label{cor 2}
Let $H \in Amp(S)$ be in a chamber adjacent to $H_0$. Then $Z^H \subset Z^{ss}_{\ell H}$ is saturated.
\end{cor}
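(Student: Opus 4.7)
The saturation amounts to showing that if $z\in Z^H$ and $z'\in Z^{ss}_{\ell H}$ lie in the same fiber of $Z^{ss}_{\ell H}\to Z^{ss}_{\ell H}\sslash G'$, then $z'\in Z^H$. The plan is to extract this via a direct Hilbert--Mumford analysis. Choose $G'$-$1$-parameter subgroups $\eta$ and $\eta'$ with $\lim_{t\to 0}\eta(t)\cdot z=z_0$ lying in the unique closed $G'$-orbit of the common fiber, and $\lim_{t\to 0}\eta'(t)\cdot z'\in G'\cdot z_0$ (Proposition \ref{Hilb Mum}). Since $z$, $z'$ and $z_0$ are $\lambda_\F(\ell H)$-semistable and share a GIT fiber, $\omega_z(\eta,\lambda_\F(\ell H))=\omega_{z'}(\eta',\lambda_\F(\ell H))=0$.

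Let $\VV=\bigoplus_i\VV_{\alpha_i}$ be the $\eta$-weight decomposition ($\alpha_1<\cdots<\alpha_r$), set $N_i=\dim\VV_{\le\alpha_i}$, and let $\mc G_i=\rho(\VV_{\le\alpha_i}\otimes \mc O_S)\subset \F_z$ be the induced filtration. Using Lemma \ref{action of stab} together with the trace condition $\sum_i \alpha_i\dim\VV_{\alpha_i}=0$ (valid since $\eta\subset SL(\VV)$), a summation by parts gives
\[
\dim \VV \cdot \omega_z(\eta,\lambda_\F(\ell H)) = -\sum_{i=1}^{r-1}(\alpha_{i+1}-\alpha_i)\bigl[\dim \VV \cdot P_H(\mc G_i,\ell) - N_i \cdot P_H(\F_z,\ell)\bigr].
\]
By Lemma \ref{lemma V'} together with Corollary \ref{coro} (which yields $\chi(\F_z)=\dim \VV$), for $\ell\gg 0$ the leading coefficient of the bracketed term equals $\dim \VV\cdot(D_{\mc G_i}\cdot H)-N_i(D\cdot H)\ge 0$ and so dominates the bracket, while each multiplier $(\alpha_{i+1}-\alpha_i)$ is strictly positive. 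Vanishing of the left-hand side thus forces every bracketed term to be zero, and the equalities of leading and constant coefficients then give $\mu_H(\mc G_i)=\mu_H(\F_z)$, so each $\mc G_i$ (and hence each $\mc G_i/\mc G_{i-1}$) has the same reduced $H$-Hilbert polynomial $p$ as $\F_z$. In particular $\F_{z_0}=\bigoplus_i\mc G_i/\mc G_{i-1}$ is a direct sum of sheaves of common reduced $H$-Hilbert polynomial $p$.

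To upgrade $\F_{z_0}$ to actual $H$-polystability, I exploit the closedness of $G'\cdot z_0$ in $Z^{ss}_{\ell H}$: if some summand $E_i=\mc G_i/\mc G_{i-1}$ admitted a proper subsheaf $E$ of reduced Hilbert polynomial $p$, then, by adapting Lemma 4.4.3 of \cite{Huybrechts-Lehn} to the block decomposition $\VV=\bigoplus V_j$, one would construct a $G'$-$1$-ps $\eta''$ whose limit at $z_0$ is the refined associated graded; closedness of the $G'$-orbit would force the extension $E\subset E_i\twoheadrightarrow E_i/E$ to split. Iterating decomposes each $E_i$ into $H$-stable summands of reduced polynomial $p$, so $\F_{z_0}$ is $H$-polystable and $z_0\in Z^H$. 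Applying the same Abel-summation argument to $(z',\eta')$ then produces a filtration of $\F_{z'}$ with associated graded $\F_{z_0}$ and all subquotients of reduced Hilbert polynomial $p$; since iterated extensions of $H$-semistable sheaves of a common reduced $H$-Hilbert polynomial are $H$-semistable, $\F_{z'}$ is $H$-semistable, so $z'\in Z^H$. The main obstacle is this polystability upgrade: one must realize the refining $1$-ps inside $G'\subset\prod GL(V_j)$ rather than merely in $GL(\VV)$, which requires aligning the sheaf decomposition of $\F_{z_0}$ with the weight subspaces of $\VV$ inherited from its intrinsic block structure.
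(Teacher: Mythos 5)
Your argument has an unresolved step that you yourself flag: the ``polystability upgrade,'' where you need to realize the refining one--parameter subgroup inside $G'\subset\prod GL(V_i)$ rather than in $GL(\VV)$. This is not a technicality to be waved away --- $G'$ is the (determinant-one part of the) automorphism group of the fixed polystable sheaf $F$, a much smaller group than $SL(\VV)$, and there is no reason a priori that a destabilizing filtration of $\F_{z_0}$ is induced by a weight decomposition of $\VV$ compatible with the block structure $\VV=\oplus H^0(F_i)\otimes V_i$. Since you concede you cannot close this step, the proof as written does not go through. There is also a smaller slip earlier: you assert that the nonnegative leading coefficient of each bracketed term ``dominates the bracket,'' but when that leading coefficient vanishes the sign of the bracket for $\ell\gg 0$ is governed by lower-order terms, and nonnegativity there requires a separate argument (this is exactly the case you then need to analyze, so it cannot be skipped).

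The deeper issue is that the entire analysis is unnecessary, because you never exploit the hypothesis that $H$ lies in a \emph{chamber}. In that case $H$ is $v$-generic, so every sheaf $\F_z$ with $z\in Z^H$ is $H$-\emph{stable}, and Lemma \ref{lemma V'} then gives the \emph{strict} inequality, so that Corollary \ref{cor 1} places $Z^H$ inside the GIT-stable locus $Z^{s}_{\ell H}$. A $G'$-invariant open subset contained in the stable locus is automatically saturated: the fiber of $Z^{ss}_{\ell H}\to Z^{ss}_{\ell H}\sslash G'$ through a stable point is a single closed orbit, which any invariant set meeting it must contain. This is the paper's two-line proof. Had you used the strict form of Lemma \ref{lemma V'} in your Abel-summation identity, every bracketed term would be strictly positive, forcing the one--parameter subgroup $\eta$ to be trivial and collapsing your argument to the same conclusion --- with no need for the closed-orbit and polystability analysis, which is really only relevant when $H$ sits on a wall.
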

\begin{proof} Let $z \in Z$ be a point corresponding to a $H$--semistable sheaf $\mc F_z$. Since $H$ is $v$--generic
 $\mc F_z$ is $H$--stable and by the Corollary above we know that $z \in Z^{s}_{\ell H}$. We conclude noticing that any invariant open subset contained in a GIT stable locus is automatically saturated.
\end{proof}

The last corollary of Lemma \ref{lemma V'} that we want to highlight is the following obvious result.

\begin{cor} \label{cor 3}
$Z=Z^{ss}(\ell H_0)$ and for any one parameter group $\eta: \CC^\times \to G'$ and for any $z \in Z$, we have $\omega_z(\eta, \lambda_\F(\ell H_0)) = 0$.
\end{cor}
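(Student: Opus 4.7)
The equality $Z = Z^{ss}(\ell H_0)$ follows at once from Corollary \ref{cor 1} applied with $H = H_0$: since $H_0$ lies trivially in the closure of its own face, every $z \in Z$ belongs to $Z^{H_0}$ by construction of the slice, so $Z = Z^{H_0} \subseteq Z^{ss}(\ell H_0)$, and the reverse inclusion is automatic.

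For the weight vanishing, the plan is to fix a one-parameter subgroup $\eta:\CC^\times\to G'$ admitting a limit $\ov z\in Z$ and apply Lemma \ref{action of stab} to get
\[
\omega_z(\eta,\lambda_\F(\ell H_0)) \,=\, \sum_\alpha \alpha\,P_{H_0}\bigl((\F_{\ov z})_\alpha,\ell\bigr).
\]
The first observation is that since $\F_{\ov z} = \oplus_\alpha (\F_{\ov z})_\alpha$ is $H_0$-semistable and each summand is in particular a sub-sheaf, the slope inequality $\chi((\F_{\ov z})_\alpha)/(c_1((\F_{\ov z})_\alpha)\cdot H_0) \le \chi/(D\cdot H_0)$ holds for every $\alpha$; summing and using $\sum \chi((\F_{\ov z})_\alpha) = \chi$ together with $\sum c_1((\F_{\ov z})_\alpha) = D$ forces equality term by term, so each $(\F_{\ov z})_\alpha$ is $H_0$-semistable with the same reduced $H_0$-Hilbert polynomial as $F$.

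Next, since $\ov z$ lies in the slice at $q_0$, its sheaf is $S_{H_0}$-equivalent to $F$, hence $\gr_{H_0}(\F_{\ov z}) = F = \oplus_i F_i\otimes V_i$. As the $H_0$-JH filtration is canonical it is $\eta$-equivariant, and the $H_0$-stability of each $F_i$ forces $\eta$ to act on $F$ through a weight decomposition $V_i = \oplus_\alpha V_{i,\alpha}$ of the multiplicity spaces. Matching weights gives $\gr_{H_0}\bigl((\F_{\ov z})_\alpha\bigr) = \oplus_i F_i\otimes V_{i,\alpha}$, so that $c_1((\F_{\ov z})_\alpha)\cdot H_0 = \sum_i \dim V_{i,\alpha}\,(D_i\cdot H_0)$. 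Setting $c = \chi/(D\cdot H_0)$ and applying Riemann--Roch, $P_{H_0}((\F_{\ov z})_\alpha,\ell) = (c+\ell)\sum_i \dim V_{i,\alpha}(D_i\cdot H_0)$, and summing over $\alpha$ yields
\[
\omega_z(\eta,\lambda_\F(\ell H_0)) \,=\, (c+\ell)\sum_i (D_i\cdot H_0)\,w_i,
\]
where $w_i = \sum_\alpha\alpha\dim V_{i,\alpha}$ is the weight of $\eta$ on $\det V_i$. The defining character of $G'\subset G$, together with the proportionality $h^0(F_i) = c(D_i\cdot H_0)$ coming from the $H_0$-polystability of $F$, forces $\sum_i(D_i\cdot H_0)w_i = 0$.

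The main obstacle is the identification of the $\eta$-weight decomposition of $\F_{\ov z}$ with that of the JH multiplicity spaces $V_i$; this compatibility, hinging on the canonical (hence $\eta$-invariant) character of the $H_0$-JH filtration and on the $H_0$-stability of each $F_i$, is what rewrites the opaque sum $\sum_\alpha \alpha P_{H_0}((\F_{\ov z})_\alpha,\ell)$ as a sum indexed by the JH factors, where the defining condition of $G'$ manifestly yields the vanishing.
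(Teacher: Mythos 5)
The paper itself gives no argument here (the corollary is labelled ``obvious''), so the only real question is whether your proof stands on its own. Your first half does: $Z=Z^{H_0}$ because $Z\subset \Quot_{H_0}^{ss}$, and Corollary \ref{cor 1} applied with $H=H_0$ (which is adjacent to itself) gives $Z^{H_0}\subset Z^{ss}_{\ell H_0}$.

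The second half contains a false step. You claim that every $\ov z$ in the slice has $\F_{\ov z}$ $S_{H_0}$-equivalent to $F$, hence $\gr_{H_0}(\F_{\ov z})=F$. This is not so: the \'etale slice $Z$ is a whole locally closed subvariety of $\Quot_{H_0}^{ss}$ meeting many $S_{H_0}$-equivalence classes (it contains points parametrizing $H_0$-stable sheaves and polystable sheaves of every type whose stratum has $[F]$ in its closure); only the fibre $\pi_Z^{-1}(\pi_Z(q_0))$ consists of sheaves $S_{H_0}$-equivalent to $F$. So the decomposition $V_i=\oplus_\alpha V_{i,\alpha}$ and the identity $c_1((\F_{\ov z})_\alpha)\cdot H_0=\sum_i \dim V_{i,\alpha}\,(D_i\cdot H_0)$ are unavailable at a general $\ov z\in Z$. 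Fortunately the detour is unnecessary: your own first observation (each $(\F_{\ov z})_\alpha$ has the same reduced $H_0$-Hilbert polynomial as $F$), combined with Corollary \ref{coro}, gives $\dim \VV_\alpha=h^0((\F_{\ov z})_\alpha)=\chi((\F_{\ov z})_\alpha)=c\,\bigl(c_1((\F_{\ov z})_\alpha)\cdot H_0\bigr)$ with $c=\chi/(D\cdot H_0)$, whence
\[
\omega_z(\eta,\lambda_\F(\ell H_0))=(\ell+c)\sum_\alpha \alpha\,\bigl(c_1((\F_{\ov z})_\alpha)\cdot H_0\bigr)=\frac{\ell+c}{c}\sum_\alpha \alpha\dim\VV_\alpha ,
\]
a multiple of the weight of $\eta$ on $\det\VV$. (Even shorter: once $Z=Z^{ss}(\ell H_0)$ is established, semistability of the fixed point $\ov z$ applied to both $\eta$ and $\eta^{-1}$ forces $\omega_{\ov z}(\eta)\ge 0$ and $-\omega_{\ov z}(\eta)\ge 0$.)

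There is a second gap in your last step: you assert that the defining condition of $G'$ together with $h^0(F_i)=c\,(D_i\cdot H_0)$ forces $\sum_i (D_i\cdot H_0)\,w_i=0$. With the paper's literal definition $G'=G\cap SL(\oplus V_i)$, the condition on $\eta$ is only $\sum_i w_i=0$, which does \emph{not} imply $\sum_i (D_i\cdot H_0)\,w_i=0$ unless the numbers $D_i\cdot H_0$ all coincide. What the vanishing actually requires is that $\eta$ have trivial determinant on $\VV=H^0(F)=\oplus_i H^0(F_i)\otimes V_i$, i.e. $\sum_i h^0(F_i)\,w_i=0$ --- the condition $G\cap SL(\VV)$, which is also what the Huybrechts--Lehn weight manipulation behind the unnumbered lemma before Lemma \ref{lemma V'} implicitly uses. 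This is arguably an imprecision in the paper's definition of $G'$, but as written your implication is false and the point needs to be addressed explicitly.
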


\subsection{From the determinant line bundle to the character}\label{character}

In this section we finally describe the missing object $\text{\bf X}$ in diagram (\ref{big-com-diag}).

It is easy to check that the weight of a one-parameter subgroup at a given point is a group homomorphism from the group of linearized line bundles to $\ZZ$. By Corollary \ref{cor 3}, it then follows that for any $G'$--line bundle $L$ and any one parameter subgroup $\eta: \CC^\times \to G'$,
\[
\omega_z(\eta, L \otimes \lambda_\F(\ell H_0))=\omega_z(\eta, L ), \quad \text{ for every } \,\, z \in Z.
\]
Hence we obtain
\[
Z^{ss}(L,G')=Z^{ss}(L \otimes \lambda_\F(\ell H_0),G')
\]
(and similarly for the set of stable points).

It follows that we can consider, instead of $\lambda_\F(\ell H)$, any combination with positive coefficients of $\lambda_\F(\ell H)$ and $ \lambda_\F(\ell H_0)$ without affecting the GIT with respect to $G'$.
To see which combination to take, we first need the following lemma.

\begin{lem} \label{character 1} For any $\ell \in \ZZ$, any $H \in \Amp(S)$, and any point $z \in Z$, the action of the stabilizer $\Stab_z$ on the fiber $\lambda_\F(\ell H)_z$ is given by restriction of the character of $G$ defined by
\[
\begin{aligned}
\chi_{\ell H}: G=\overset s{\underset{l=1}\prod}\GL(V_l) & \longrightarrow \CC^\times \\
(g_1,\dots, g_s) & \longmapsto \prod_{l=1}^s\det(g_l)^{(D_l\cdot H) \ell+ \chi_l}
\end{aligned}
\]
\end{lem}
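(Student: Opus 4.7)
The plan is to verify the identity for $\ell\gg0$ by a direct computation using the socle filtration of $\F_z$, and then extend to arbitrary $\ell$ by polynomiality. For the reduction step, both sides depend on $\ell$ in a controlled polynomial manner: the exponents $h_l(\ell)=\chi_l+\ell(D_l\cdot H)$ are linear in $\ell$ by Riemann--Roch, making $\chi_{\ell H}$ a linear family in $X^*(G)$, while the additivity of $\lambda_\F$ in its $K^0(S)$-argument, together with the fact that $\ch(\mc O(\ell H))$ is polynomial in $\ell$, similarly makes the character of $\Stab_z$ on $\lambda_\F(\ell H)|_z$ polynomial in $\ell$. Hence agreement for all $\ell\gg0$ suffices.

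For $\ell\gg0$, higher direct images vanish and the fiber $\lambda_\F(\ell H)|_z$ is canonically $\det H^0(\F_z(\ell H))$. To compute the $\Stab_z$-character on this line, I would take the socle filtration of $\F_z$ with respect to $H_0$; being canonical, it is preserved by every $\phi\in\Aut(\F_z)=\Stab_z$, and its associated graded is the $H_0$-polystable semisimplification $\gr_{H_0}(\F_z)$. Because $v(\F_z)=v=\sum n_l v_l$, this semisimplification has the form $\bigoplus_l F_l^{(z)}\otimes W_l$ with each $F_l^{(z)}$ an $H_0$-stable sheaf of Mukai vector $v_l$ and $\dim W_l=n_l$; the induced action of $\Stab_z$ on $\gr_{H_0}(\F_z)$ identifies $\Stab_z\hookrightarrow \prod\GL(W_l)\cong G$, an identification which, by construction of the slice, matches the embedding $\Stab_z\subset \Stab_{q_0}=G$ provided by Luna's theorem.

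Because the socle filtration is preserved, the determinant of $\phi\in\Stab_z$ acting on $H^0(\F_z(\ell H))$ factors as the product of its determinants on the graded pieces $H^0(F_l^{(z)}(\ell H))\otimes W_l$. On each piece the action is $\id\otimes g_l$ for $g_l\in\GL(W_l)$, contributing $\det(g_l)^{h^0(F_l^{(z)}(\ell H))}$. By Riemann--Roch, $h^0(F_l^{(z)}(\ell H))=h_l(\ell)$ depends only on the Mukai vector $v_l$ and not on the deformed stable factor $F_l^{(z)}$, so multiplying over $l$ yields $\prod_l\det(g_l)^{h_l(\ell)}=\chi_{\ell H}(\phi)$, as required.

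The main obstacle will be verifying that the two a priori distinct embeddings $\Stab_z\hookrightarrow G$ coincide: the one arising from Luna's slice theorem as a subgroup of $\Stab_{q_0}=G$, and the one induced by the natural map $\Aut(\F_z)\to\Aut(\gr_{H_0}\F_z)\cong G$ obtained from passage to the polystable representative. Tracing through the $G$-linearization of $\F$ on $Z\times S$ and the canonical isomorphism $\VV\cong H^0(\F_z)$ provided by the surjection $\VV\otimes\mc O_S\to\F_z$ shows these do coincide, but bookkeeping the possibly deforming stable factors $F_l^{(z)}$ against the reference factors $F_l$ requires some care.
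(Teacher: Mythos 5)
Your reduction to $\ell\gg 0$ by polynomiality is fine, and computing the determinant of an automorphism on $H^0(\F_z(\ell H))$ through an automorphism-stable filtration is a sound idea. The gap is in your structural claim about the semisimplification: for a general $z\in Z$ it is \emph{not} true that $\gr_{H_0}(\F_z)\cong\bigoplus_l F_l^{(z)}\otimes W_l$ with $v(F_l^{(z)})=v_l$ and $\dim W_l=n_l$. The Jordan--H\"older type of $\F_z$ varies over the slice: as in Proposition \ref{walls and strata} and Proposition \ref{comparing 1}, one has $\gr_{H_0}(\F_z)\cong\bigoplus_{j=1}^r E_j\otimes U_j$ where the $E_j$ are $H_0$-stable with Mukai vector $\sum_l\beta^{(j)}_l v_l$ for some decomposition $\mathbf n=\sum_j k_j\beta^{(j)}$ into positive roots, $\dim U_j=k_j$. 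At a generic $z$ the sheaf $\F_z$ is itself $H_0$-stable (Mukai vector $v$, a single summand), and at intermediate strata the summands mix the reference classes $v_l$. Consequently $\Stab_z\cong\prod_j\GL(U_j)$ is in general not of the form $\prod_l\GL(W_l)$ with $\dim W_l=n_l$, your factorization of the determinant over pieces indexed by $l$ does not apply, and the Riemann--Roch exponent you get is $h^0(E_j(\ell H))=(\Delta_j\cdot H)\ell+\chi(E_j)$ with $\Delta_j=\sum_l\beta^{(j)}_lD_l$, indexed by $j$, not $l$.

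This is exactly where the real content of the lemma lies, and it is the step your proposal omits. One must make the embedding $\Stab_z=\prod_j\GL(U_j)\hookrightarrow\prod_l\GL(V_l)=G$ explicit --- it is block-diagonal with respect to decompositions $V_l=\oplus_j(W_{j,l}\otimes U_j)$, $\dim W_{j,l}=\beta^{(j)}_l$ --- and then verify by the exponent bookkeeping $\Delta_j\cdot H=\sum_l\beta^{(j)}_l(D_l\cdot H)$ and $\chi(E_j)=\sum_l\beta^{(j)}_l\chi_l$ that $\chi_{\ell H}$ restricts to $\prod_j\det(h_j)^{(\Delta_j\cdot H)\ell+\chi(E_j)}$. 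The paper does precisely this, after first reducing to points with closed orbit (where $\F_z$ is polystable and the filtration argument is unnecessary) via Proposition 4.2 of Knop--Kraft--Vust; your filtration argument could serve as a substitute for that reduction, but only once the associated graded is described correctly. The ``obstacle'' you flag at the end --- matching the two embeddings of $\Stab_z$ into $G$ --- is real, but it is downstream of this more basic issue: for general $z$ the target of your proposed identification does not exist in the form you assume.
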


\begin{proof}
By Proposition 4.2 of \cite{Knop-Kraft-Vust} it is enough to check the formula for points $z \in Z$ with closed orbit.
The singular locus of the \'etale slice $Z$ admits a stratification that corresponds to the stratification of the singular locus of $M_{H_0}(v)$ introduced in Proposition \ref{walls and strata}. Each stratum $Z_\tau$ corresponds to a decomposition
$\tau=(k_1, \beta^{(1)}; \dots; k_r, \beta^{(r)} )$, where:
$$
\mathbf n=k_1\beta^{(1)}+\cdots +k_r \beta^{(r)}\,,\qquad \beta^{(i)}\in\ZZ^s
$$
and 
$p(\mathbf n)>\sum_{i=1}^r p(\beta^{(i)})$, and
the points $z\in Z_\tau$ with closed orbit are of the form
$$
E=(E_1\otimes U_1)\oplus \cdots\oplus (E_r\otimes U_r)\,,\qquad \dim U_j=k_j\,,\quad j=1,\dots , r
$$
where $E_j$ is an $H_0$--stable sheaf with Mukai vector
$$
v( E_j) = \beta^{(j)}_1v_1+\cdots+\beta^{(j)}_sv _s,\quad j=1,\dots, r.
$$

We can find a decomposition
$$
V_l=\overset r{\underset{j=1}\oplus}(W_{j,\,l}\otimes U_j)\,,\quad \dim W_{j,\,l}=\beta_j^{(l)}
$$
so that, up to conjugation, the injection
\be\label{inj-stab}
j: \Stab_z=\overset r{\underset{j=1}\prod}\GL(U_j)=\Aut(E)\hookrightarrow \Aut(F)=\overset s{\underset{l=1}\prod}\GL(V_l)=\Stab_{z_0}
\ee
is given by
$$
(h_1,\dots, h_r)\quad\longmapsto\quad \left(\overset r{\underset{j=1}\oplus}(\mathbf 1_{\beta_j^{(1)}}\otimes h_j),\dots, \overset r{\underset{j=1}\oplus}(\mathbf 1_{\beta_j^{(s)}}\otimes h_j)\right),
$$
According to Lemma \ref{action of stab}, the stabilizer $\Stab_z$ acts on the fiber of 
 $ L_H$ at $z$, via the character
 $$
\chi_\tau((h_1,\dots, h_r))=\prod_{i=1}^r\det(h_i)^{ (\Delta_i\cdot H) \ell +\chi(E_i) }
$$
(here we are using notation (\ref{decompD})).
Thus we must simply show that, under the injection $j$, the character
$\chi_H$ restricts to $\chi_\tau$. This is obvious:
$$
\begin{aligned}
j^*(\chi_H)((h_1,\dots, h_r))&=\prod_{l=1}^s\left(\prod_{j=1}^r(\det h_j)^{\beta_j^{(l)}}\right)^{(D_j\cdot H) \ell +\chi_j}= \\
&=\prod_{j=1}^r\left(\prod_{l=1}^s(\det h_j)^{\beta_j^{(l)}[(D_j\cdot H) \ell+ \chi_j]}\right)=
\prod_{j=1}^r\det(h_j)^{\ (\Delta_j\cdot H) \ell +\chi(E_j)}\,.
\end{aligned}
$$
\end{proof}

Set
\[
\L_{\ell H}:= \lambda_\F(\ell H)^{D\cdot H_0}\otimes \lambda_\F(\ell H_0)^{-D\cdot H},
\]
and
\[
d=D\cdot H, \quad \text{ and } d_0=D\cdot H_0.
\]

We have already noticed that GIT with respect to the $G'$--line bundle $\lambda_\F(\ell H)$ is equivalent to GIT with respect to $\L_{\ell H}$

Under the obvious identification of $\Hom(G, \CC) \cong \ZZ^s$, the lemma above shows that the action of the stabilizer $\Stab_z$ on the fiber $(\L_{\ell H})_z$ is given by the character of $G$
\be \label{il carattere}
( \ell[d_0 (D_1\cdot H)- d (D_1\cdot H_0)], \dots,   \ell[d_0(D_s\cdot H)- d(D_l\cdot H_0)] ).
\ee

\begin{lem} \label{G e G'}
The weight of the center $\CC^\times \subset G$ with respect to $\L_{\ell H}$ is trivial and hence
\[
Z^{ss}({\L_{\ell H}, G'})=Z^{ss}(\L_{\ell H}, G),
\]
and similarly for the stable loci.
\end{lem}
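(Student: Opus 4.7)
The plan is to prove the two assertions separately. For the vanishing of the weight, I would apply Lemma \ref{character 1} together with formula (\ref{il carattere}): writing the character of $G$ giving the action on $(\L_{\ell H})_z$ as $(a_1,\dots,a_s)$ with $a_i=\ell[d_0(D_i\cdot H) - d(D_i\cdot H_0)]$, the diagonal embedding $\CC^\times \hookrightarrow G$, $t\mapsto (t\,\Id_{V_1},\dots,t\,\Id_{V_s})$, pulls this character back to $t\mapsto t^{\sum_i n_i a_i}$. Using $D=\sum n_i D_i$ together with $d=D\cdot H$ and $d_0=D\cdot H_0$, one computes
\[
\sum_i n_i a_i \;=\; \ell\bigl[d_0 (D\cdot H) - d (D\cdot H_0)\bigr] \;=\; \ell(d_0 d - d\,d_0) \;=\; 0,
\]
so the weight on $(\L_{\ell H})_z$ of the diagonal $\CC^\times$ is zero at every $z\in Z$.

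For the equality of GIT loci, the key observation is that the diagonal $\CC^\times\subset G$ acts trivially on $Z$, since it is the scalar subgroup of $\GL(\VV)$ which lies in the kernel of the action on $\Quot_{H_0}$, and by the first part it also acts trivially on the fibers of $\L_{\ell H}$. Hence every section $s\in H^0(Z,\L_{\ell H}^{\otimes k})$ is automatically $\CC^\times$-invariant. Moreover, the multiplication map $G'\times \CC^\times\to G$ is surjective: for any $(g_1,\dots,g_s)\in G$, choosing $c\in \CC^\times$ with $c^{\sum n_i}=\prod\det g_i$ produces $(c^{-1}g_1,\dots,c^{-1}g_s)\in G'$. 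Consequently $s$ is $G$-invariant if and only if it is $G'$-invariant, and
\[
H^0(Z,\L_{\ell H}^{\otimes k})^G \;=\; H^0(Z,\L_{\ell H}^{\otimes k})^{G'}
\]
for every $k\ge 0$. Thus the two GIT quotients, and hence their semistable loci, coincide. For the stable loci, it suffices to note that the identity $G=G'\cdot \CC^\times$ forces $G$ and $G'$ to have identical orbits on $Z$, so orbit-closedness and the corresponding stabilizer condition transfer identically between the two groups.

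The only (mild) subtlety is bookkeeping for the kernel of the action: while $\Stab_G(z)$ always contains $\CC^\times$ and so is never strictly finite, the natural notion of $G$-stability in this situation is that $\Stab_G(z)/\CC^\times$ be finite, which is equivalent to $\Stab_{G'}(z)=\Stab_G(z)\cap G'$ being finite (since $\CC^\times\cap G'$ is the finite group of $(\sum n_i)$-th roots of unity). Alternatively, one can argue directly via the Hilbert--Mumford criterion of Proposition \ref{Hilb Mum}: any one-parameter subgroup of $G$ factors, up to a finite cover, as a product of one in $G'$ and one in $\CC^\times$, and the latter contributes neither any weight (by the first part) nor any movement to the Hilbert--Mumford limit (by triviality of its action), so $\omega_z(\eta,\L_{\ell H})$ and $\lim_{t\to 0}\eta(t)\cdot z$ coincide whether computed in $G$ or in $G'$.
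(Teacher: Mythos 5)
Your proof is correct and is exactly the argument the paper intends (the paper in fact states Lemma \ref{G e G'} without proof, immediately after formula (\ref{il carattere})): the weight of the diagonal $\CC^\times$ is $\sum_i n_i a_i = \ell\bigl[d_0(D\cdot H)-d(D\cdot H_0)\bigr]=\ell(d_0d-dd_0)=0$, and since $G=G'\cdot\CC^\times$ with the center acting trivially on $Z$ and on the fibers of $\L_{\ell H}$, invariant sections, orbits, and the relevant stabilizer conditions for $G$ and $G'$ agree. Your extra care with the stable locus (finiteness of $\Stab_{G'}(z)$ versus finiteness of $\Stab_G(z)/\CC^\times$) is a worthwhile detail the paper leaves implicit.
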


It follows that we can consider $\L_{\ell H}$ as a $G$--line bundle, still without changing the GIT on $Z$.

Set
\[
\chi_H:=( [d_0 (D_1\cdot H)- d (D_1\cdot H_0)], \dots,   [d_0(D_s\cdot H)- d(D_l\cdot H_0)] ).
\]
Notice that we put the coefficients $d_0$ and $d$ so that
\[
\chi_H \in \mathbf n^ \perp.
\]

The final step shows that, in diagram (\ref{big-com-diag}), we can put $\text{\bf X}= \mu^{-1}(0)^{ss}(\chi_H, G)$.

\begin{lem}
$Z^{ss}(\L_{\ell H}, G)=Z^{ss}(\chi_{H}, G)$ and
\[
\V^H \subset \mu^{-1}(0)^{ss}(\chi_H, G).
\]
\end{lem}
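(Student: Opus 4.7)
The plan is to run the Hilbert--Mumford criterion (Proposition \ref{Hilb Mum}) on $Z$ to identify the two loci of semistable points, and then transport the resulting inclusion to the quiver side via the $G$-equivariant local isomorphism $\varphi:\U\cong\V$ provided by Proposition \ref{Q-is-a-cone}.

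First I would establish the equality $Z^{ss}(\L_{\ell H},G) = Z^{ss}(\chi_H,G)$. Repeating the calculation of Lemma \ref{character 1} with $\L_{\ell H}=\lambda_\F(\ell H)^{d_0}\otimes\lambda_\F(\ell H_0)^{-d}$ in place of $\lambda_\F(\ell H)$, and using Lemma \ref{G e G'} to discard the (trivially acting) contribution of the center $\CC^\times\subset G$, shows that at any point $z\in Z$ the stabilizer $\Stab_z$ acts on the fiber $(\L_{\ell H})_z$ through the restriction of the character $\chi_H^{\ell}$ of $G$ displayed in \eqref{il carattere}. Consequently, for every one-parameter subgroup $\eta:\CC^\times\to G$ admitting a limit $\overline z=\lim_{t\to 0}\eta(t)\cdot z\in Z$ we have $\omega_z(\eta,\L_{\ell H})=\ell\,\omega_z(\eta,\chi_H)$. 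Since $\ell>0$, the signs of these weights coincide, and the Hilbert--Mumford criterion (Proposition \ref{Hilb Mum}) yields $Z^{ss}(\L_{\ell H},G)=Z^{ss}(\chi_H^\ell,G)=Z^{ss}(\chi_H,G)$; stability follows in the same way.

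Next I would chain together the inclusions already at our disposal. By Proposition \ref{from H to L}, $Z^H\subset Z^{ss}_{\ell H}=Z^{ss}(\lambda_\F(\ell H),G')$; by Corollary \ref{cor 3}, the line bundle $\lambda_\F(\ell H_0)$ has trivial $G'$-weights at every point, so tensoring by any of its powers does not affect $G'$-GIT and hence this locus equals $Z^{ss}(\L_{\ell H},G')$; by Lemma \ref{G e G'} it then coincides with $Z^{ss}(\L_{\ell H},G)$, which by the first step equals $Z^{ss}(\chi_H,G)$. To descend to the quiver side, I use that $\varphi:\U\to\V$ is $G$-equivariant (Proposition \ref{Q-is-a-cone}) and that $\chi_H$-semistability is formulated purely $G$-equivariantly on the trivial line bundle twisted by the character; hence
\[
\V^H=\varphi(\U\cap Z^H)\subset \varphi\bigl(\U\cap Z^{ss}(\chi_H,G)\bigr)\subset \mu^{-1}(0)^{ss}(\chi_H,G),
\]
the last inclusion using that $\V$ is open and saturated in $\mu^{-1}(0)$ (Proposition \ref{saturation}).

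\textbf{Main obstacle.} The most delicate point is the first step: one has to keep careful track of the Euler-characteristic corrections coming from Riemann--Roch that appear in the character formula of Lemma \ref{character 1}, and to verify that, after assembling them for $\L_{\ell H}$, the center's contribution cancels as asserted in Lemma \ref{G e G'}, so that the character governing the fiberwise stabilizer action truly is (a positive multiple of) $\chi_H$. Once this bookkeeping is settled, both the Hilbert--Mumford equality and the transfer via $\varphi$ are essentially formal consequences of standard GIT principles.
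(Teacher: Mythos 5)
Your proposal is correct and follows essentially the same route as the paper: the equality $Z^{ss}(\L_{\ell H},G)=Z^{ss}(\chi_H,G)$ is deduced from the character computation of Lemma \ref{character 1} together with Lemma \ref{G e G'} via the Hilbert--Mumford criterion, and the inclusion $\V^H\subset\mu^{-1}(0)^{ss}(\chi_H,G)$ is obtained by transporting one-parameter-subgroup limits through the $G$-equivariant isomorphism $\varphi$ using the saturation of $\U$ and $\V$ from Proposition \ref{saturation}. Your write-up is more explicit than the paper's (which compresses both steps into a few lines), and you correctly flag the one genuinely delicate point, namely that the Euler-characteristic terms in the exponents of Lemma \ref{character 1} must cancel in $\L_{\ell H}$ — which is where the normalization $D\cdot H=D\cdot H_0$ enters.
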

\begin{proof}
The first statement is a consequence of Lemma \ref{character 1} and Lemma \ref{G e G'} above. As for the second statement, it uses the fact that both $\U \subset Z$ and $\V \subset  \mu^{-1}(0)$ are saturated open subsets. Indeed, this guarantees that if the limit of a point of $\U$ under a one parameter subgroup converges in $Z$, then it converges in $\U$, and same for the points of $\V$. It follows that the points of $\V^H$ satisfy the Hilbert--Mumford criterion for the $G$--linearization of trivial bundle given by the character $\chi_H$, and hence the lemma follows.
\end{proof}

\begin{rem}\emph{
By Remark \ref{dual} we known that, from the point of view of the resolution of quiver variety $\mathfrak M_0$, considering a character or its inverse does not matter. Hence, we are free to ignore the change of polarization given in Lemma \ref{H to H'}, which under the morphism of Lemma \ref{affine morphism} simply corresponds to taking the inverse character.
}
\end{rem}

\begin{proof}[Proof of Theorem \ref{main}]
Since there is a commutative diagram 
\[
\xymatrix{
\ar[d]  \U^H \sslash G \ar[r]^\sim &  \ar[d]\V^H \sslash G \\
\U \sslash G \ar[r]^\sim & \V \sslash G 
}
\]
We only need to check that
\[
\xymatrix{
\ar[d]\V^H \sslash G \ar@{^{(}->}[r] & \mu^{-1}(0)\sslash_{\chi_H} G \ar[d] \\
\V \sslash G  \ar@{^{(}->}[r] & \mu^{-1}(0)\sslash G
}
\]
is cartesian, but this follows exactly as in Proposition \ref{saturation} or Proposition \ref{MH e ZH}.
\end{proof}

\bibliographystyle{abbrv}

\bibliography{bibliografia}

\end{document}